\newtheorem{theorem}{Theorem}[section]
\newtheorem{lemma}[theorem]{Lemma}
\newtheorem{corollary}[theorem]{Corollary}
\newtheorem{fact}[theorem]{Fact}
\newtheorem{proposition}[theorem]{Proposition}
\theoremstyle{definition}
\newtheorem{remark}[theorem]{Remark}
\newtheorem{definition}[theorem]{Definition}
\def \d {\delta}
\def \Ld {{\mathcal L}_\d}
\def \La {{\mathcal L}_{\d,\bar a}}
\def \Ll {{\mathcal L}_{\d,\ell}}
\def \SCF{\operatorname{SCF}}
\def \SCFe{\operatorname{SCF}_{p,e}}
\def \SCFl{\operatorname{SCF}_{p,e}^{\lambda}}
\def \DF{\operatorname{DF}}
\def \DCF{\operatorname{DCF}}
\def \SDCF{\operatorname{SDCF}}
\def \SDCFe{\operatorname{SDCF}_{p,\epsilon}}
\def \SDCFa{\operatorname{SDCF}_{p,\epsilon}^{\bar a}}
\def \SDCFl{\operatorname{SDCF}_{p,\epsilon}^{\ell}}
\def \tp{\operatorname{\tp}}
\def \ord {\operatorname{ord}}
\def \rk {\operatorname{rank}}
\def \NN {{\mathbb N_0}}
\def\Ind#1#2{#1\setbox0=\hbox{$#1x$}\kern\wd0\hbox to 0pt{\hss$#1\mid$\hss}
\lower.9\ht0\hbox to 0pt{\hss$#1\smile$\hss}\kern\wd0}
\def\Notind#1#2{#1\setbox0=\hbox{$#1x$}\kern\wd0\hbox to 0pt{\mathchardef
\nn=12854\hss$#1\nn$\kern1.4\wd0\hss}\hbox to
0pt{\hss$#1\mid$\hss}\lower.9\ht0 \hbox to
0pt{\hss$#1\smile$\hss}\kern\wd0}
\title[Separably differentially closed fields]{Separably differentially closed fields}
\author{Kai Ino}
\address{Kai Ino, Department of Mathematics, University of Manchester, Oxford Road, Manchester, United Kingdom M13 9PL}
\email{kai.ino@manchester.ac.uk}
\author{Omar Le\'on S\'anchez}
\address{Omar Le\'on S\'anchez, Department of Mathematics, University of Manchester, Oxford Road, Manchester, United Kingdom M13 9PL}
\email{omar.sanchez@manchester.ac.uk}
\date{\today}
\thanks{{\em Acknowledgements}: The second author was partially supported by EPSRC grant EP/V03619X/1}
\subjclass[2010]{12H05, 12F10, 03C10, 03C60}
\keywords{differential fields, separable extensions, model theory}
\begin{document}

\maketitle

\begin{abstract}
We introduce and study a new class of differential fields in positive characteristic. We call them separably differentially closed fields and demonstrate that they are the differential analogue of separably closed fields. We prove several (algebraic and model-theoretic) properties of this class. Among other things, we show that it is an elementary class, whose theory we denote $\SDCF$, and that its completions are determined by specifying the characteristic $p$ and the differential degree of imperfection $\epsilon$. Furthermore, after adding what we call the differential $\lambda$-functions, we prove that the theory $\SDCFl$ admits quantifier elimination, is stable, and prime model extensions exist.
\end{abstract}

\tableofcontents

\section{Introduction}

The theory of separably closed fields SCF, from a model-theoretic perspective, has been a subject of study since the late 1960s; see for instance \cite{Delon88,Ershov67,Srour86,Wood79}. Ershov's famous paper \cite{Ershov67} shows that the completion of SCF are determined by specifying the characteristic $p$ and, in case $p>0$, the degree of imperfection $e$. Recall that, for a field $K$, the degree of imperfection (aka Ershov invariant) is determined by $[K:K^p]=p^e$, where $e=\infty$ when the extension $K/K^p$ is of infinite degree. 

\medskip

Later on, around the 1980s, several papers appeared developing further some of the model-theoretic properties of the theory $\SCFe$. We recall some of these. In~\cite{Wood79}, one finds proof that $\SCFe$ is a stable theory. After adding the so-called $\lambda$-functions, we denote the theory by $\SCFl$. In \cite{Delon88}, it is shown that $\SCFl$ admits quantifier elimination and, when $e$ is finite, it also admits elimination of imaginaries. Also, it is known that $\SCFl$ is an equational theory with a natural algebraic description of forking independence \cite{MZ20,Srour86}, and furthermore, for finite $e$, it satisfies the non-finite cover property \cite[\S 4.4]{MMP}. 

\medskip

On the other hand, the theory of differentially closed fields in characteristic $p>0$, DCF$_p$, was initiated in the 1970s by the work of Wood in \cite{Wood73} followed by \cite{Wood74,Wood76}. It was shown there that DCF$_p$ is complete, model-complete, stable, and after adding the $p$-th root function on constants it admits quantifier elimination. In many ways, DCF$_p$ is the differential analogue of the theory ACF$_p$ (just as the better known DCF$_0$ is the differential analogue of ACF$_0$). One natural question to ask is: what theory of differential fields in characteristic $p$ is a suitable differential analogue of $\SCF_p$? To the authors' knowledge, this question has not been addressed elsewhere and was in fact the main motivation for this paper.

\medskip

In this paper we introduce a new class of differential fields that we call \emph{separably differentially closed} and, as witnessed by our results, demonstrate that they are the differential analogue of separably closed fields. The definition takes its cue from the fact that a field is separably closed if and only if it is existentially closed (in the language of fields) in every separable extension. 

\begin{definition}
A differential field $(K,\d)$, of arbitrary characteristic, is said to be separably differentially closed if it is existentially closed (in the language of differential fields) in every differential field extension that is a separable extension (as fields). In other words, if $(L,\d)$ is a differential extension of $(K,\d)$ and $L/K$ is separable, then $(K,\d)$ is e.c. in $(L,\d)$.
\end{definition}

In characteristic zero being separably differentially closed is equivalent to being differentially closed (i.e., a model of DCF$_0$). However, in characteristic $p>0$, the class of separably differentially closed fields strictly contains differentially closed fields. The reason for this is that any differentially closed field $(F,\d)$ of characteristic $p>0$, with field of constants $C_F$, satisfies $C_F=F^p$ \cite{Wood74}, this is referred as $(F,\d)$ being differentially perfect, but there are separably differentially closed fields $(K,\d)$ with $[C_K:K^p]>1$ (as we will see later). In fact, differentially closed fields are characterised as those separably differentially closed fields that are differentially perfect.

\medskip

We now let $(K,\d)$ be a differential field (of arbitrary characteristic). By means of describing separable prime differential ideals of the differential polynomial ring $K\{x\}$ in one variable (see Section \ref{describeideals}), we prove in Section \ref{sepdiff} (see Theorem \ref{foraxioms}) that the class of separably differentially closed fields is elementary with rather elegant first order axioms (in the spirit of Blum's axioms for DCF$_0$ \cite{Blum68}). We also provide several characterisations of being separably differentially closed; in particular, we  give a characterisation in terms of being constrainedly closed in the differential-algebraic sense (see Definition~\ref{defcon} and Theorem~\ref{sevchar}). Furthermore. we also obtain a geometric axiomatisation, in terms of algebraic varieties and their prolongations in the spirit of the geometric axioms for DCF$_0$ \cite{PiPi98} (see Theorem~\ref{geoaxioms}).

The following summarises the several characterisations of being separably differentially closed established in this paper.




\medskip

\noindent {\bf Theorem A.} {\it The following are equivalent}

\begin{itemize}
\item [(i)] $(K,\d)$ is separably differentially closed.

\item [(ii)] For any nonzero $f,g\in K\{x\}$ with $s_f\neq 0$ and $\ord g<\ord f$, there exists $a\in K$ such that $f(a)=0$ and $g(a)\neq 0$.

\item [(iii)] For every differentially algebraic extension $(L,\d)$ of $(K,\d)$, if $L/K$ is separable (as fields) then $(K,\d)$ is existentially closed in $(L,\d)$.

\item [(iv)]$(K,\d)$ is constrainedly closed.

\item [(v)] For each $n$ and every separable prime differential ideal $P$ of $K\{x_1,\dots,x_n\}$, if $g\in K\{x_1,\dots,x_n\}\setminus P$ then there is $\bar a\in K^n$ such that $f(\bar a)=0$ for all $f\in P$ and $g(\bar a)\neq 0$. (Note that this equivalent to: every separable prime differential ideal over $K$ has a zero in $K$.)

\item [(vi)] Let $V$ and $W$ be $K$-irreducible affine varieties over $K$ with $W\subseteq \tau V$, $W$ separable, and $\pi|_{W}:W\to V$ separable. If $O_V$ and $O_W$ are nonempty Zariski-open subsets over $K$ of $V$ and $W$, respectively, then there is a $K$-rational point $a\in O_V$ such that $(a,\d a)\in O_W$.
\end{itemize}

\medskip

Condition (ii) can be written as a scheme of first-order axioms in the language of differential fields, we denote this theory by $\SDCF$. After specifying the characteristic we write $\SDCF_p$. We note that the axioms of DCF$_p$ presented by Wood in \cite{Wood76} are precisely $\SDCF_p$ together with the theory of `differentially perfect fields'.

\medskip

We now note that, for $p>0$, the theory SDCF$_p$ is not complete. We will see in Section \ref{sepdiff} that if $(K,\d)\models \SDCF_p$ then $[K:C_K]=\infty$ and hence $[K:K^p]=\infty$ (recall that $C_K$ contains $K^p$). However, $[C_K:K^p]$ is underdetermined and in fact, we will see that for any $\epsilon\in \NN\cup\{\infty\}$ we can find a model $(K,\d)\models \SDCF_p$ with $[C_K:K^p]=p^\epsilon$ (we call $\epsilon$ the differential degree of imperfection of $(K,\d)$). After specifying $\epsilon$, we denote the theory by $\SDCFe$. Since $\SDCF_{p,0}$ coincides with DCF$_p$, we obtain from \cite{Wood74} that $\SDCF_{p,0}$ is complete. We prove in Section \ref{completions} that this is the case for arbitrary $\epsilon$.

\medskip

\noindent {\bf Theorem B.} {\it For each $p>0$ and $\epsilon\in \NN\cup\{\infty\}$, the theory $\SDCF_{p,\epsilon}$ is complete.}

\medskip

We then go further and in Section \ref{finalprop} introduce the differential analogue of the (algebraic) $\lambda$-functions. After expanding the language and specifying axioms that describe the properties of the differential $\lambda$-functions, we denote the theory by $\SDCFl$ and prove the following.

\medskip

\noindent {\bf Theorem C.} {\it For each $p>0$ and $\epsilon\in \NN\cup\{\infty\}$, the theory $\SDCFl$ admits quantifier elimination, is stable (but not superstable), and prime model extensions exist (and are unique up to isomorphism).}

\medskip

The results in this paper are differential analogues of (some of the) properties of SCF$_p$ and generalise (to arbitrary differential degree of imperfection) the known results for DCF$_p$ in \cite{Wood74,Wood76}. They lay the foundations for the theory $\SDCFl$ and we expect this theory to be further explored in future work.

\

\noindent {\bf Conventions.} For us, tuple means finite tuple. We use $\NN$ to denote the nonnegative integers.

\

Part of the work of this paper appears in the PhD Thesis of the first author \cite{kai23}.

\

We thank an anonymous referee for their detailed comments and useful feedback.

\section{Preliminaries}

In this section we give a brief, and rather direct, presentation of the algebraic, differential-algebraic, and model-theoretic preliminary results that will be needed/useful in later sections.

\subsection{Algebraic preliminaries} Let $K$ be a field of characteristic $p>0$. Recall that $K$ is said to be separably closed if $K$ has no proper separably algebraic extension. Also, recall that a field extension $L/K$ is called separable if $K$ is linearly disjoint from $L^p$ over $K^p$. The following are known to be equivalent:
\begin{itemize}
\item [(i)] $K$ is separably closed.
\item [(ii)] $K$ is existentially closed (as fields) in every separable extension. Namely, if $L/K$ is separable, then every system of polynomial equations (in finitely many variables) over $K$ with a solution in $L$ already has a solution in $K$. (Recall that the actual definition of e.c. involves inequations but for fields it is easy to see that equations suffice.)
\item [(iii)] $K$ is existentially closed in every separably algebraic field extension.
\item [(iv)] For all $f\in K[x]$ with $\frac{df}{dx}\neq 0$, there is $a\in K$ such that $f(a)=0$.
\end{itemize}

\

Let $E$ be an intermediate field of $K/K^p$. The two cases to keep in mind here are when $E=K$ and when $(K,\d)$ is a differential field with $E=C_K$ (the field of constants). Given $A\subseteq E$, the set of $p$-monomials of $A$ is
$$m(A)=\{a_1^{i_1}\cdots a_n^{i_n}: a_1,\dots,a_n\in A \text{ and } 0\leq i_1,\dots,i_n< p\}.$$
We say that $A$ is a $p$-independent subset of $E$ over $K^p$ if the $p$-monomials of $A$ are linearly independent over $K^p$. Similarly, we say that $A$ is a $p$-spanning set of $E$ over $K^p$ if the $p$-monomials of $A$ span $E$ over $K^p$. Finally, $A$ is said to be a $p$-basis for $E$ over $K^p$ if $A$ is $p$-independent and $p$-spanning for $E$ over $K^p$. 

The proofs of the following two lemmas can be found in \cite[IV.7]{Jacobson64} in the case when $E=K$, but the arguments there can easily be adapted to arbitrary $E$.

\begin{lemma}\label{propE1} Let $E$ be an intermediate field of $K/K^p$. If $E/K^p$ is finite, then $[E:K^p]=p^e$ for some $e\in \NN$. Furthermore, this occurs if and only if there is a $p$-basis of $E$ over $K^p$ of (finite) size $e$.
\end{lemma}

\begin{lemma}\label{propE2}
Let $E$ be an intermediate field of $K/K^p$, and $L$ an extension of $K$. Then, the following are equivalent.
\begin{itemize}
\item [(i)] $E$ and $L^p$ are linearly disjoint over $K^p$.
\item [(ii)] Every $p$-independent subset of $E$ over $K^p$ is also a $p$-independent subset of $L$ over $L^p$.
\item [(iii)] There exists a $p$-basis for $E$ over $K^p$ which is a $p$-independent subset of $L$ over $L^p$.
\end{itemize}
\end{lemma}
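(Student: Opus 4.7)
The plan is to prove the implications in the order $(i) \Rightarrow (ii) \Rightarrow (iii) \Rightarrow (i)$, working inside $L$, in which both $E$ and $L^p$ sit. Linear disjointness of $E$ and $L^p$ over $K^p$ will be used in its elementary form: every $K^p$-linearly independent finite subset of $E$ remains $L^p$-linearly independent in $L$.

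For $(i) \Rightarrow (ii)$, let $A \subseteq E$ be $p$-independent over $K^p$, so by definition $m(A) \subseteq E$ is $K^p$-linearly independent. Applying linear disjointness to this finite subset of $E$ shows $m(A)$ is $L^p$-linearly independent in $L$, i.e., $A$ is $p$-independent in $L$ over $L^p$. The implication $(ii) \Rightarrow (iii)$ is immediate once one knows that $p$-bases of $E$ over $K^p$ exist; this follows by Zorn's lemma applied to the poset of $p$-independent subsets of $E$ over $K^p$, exactly as in Lemma \ref{propE1}, and then (ii) is invoked on this basis.

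The substantive direction is $(iii) \Rightarrow (i)$. Fix $B$ as in (iii); then $m(B)$ is a $K^p$-basis of $E$ and simultaneously $L^p$-linearly independent in $L$. Given $e_1,\dots,e_n \in E$ that are $K^p$-linearly independent, expand each $e_i = \sum_j c_{ij}\, b_j$ with $b_j \in m(B)$ and $c_{ij} \in K^p$; the $K^p$-independence of the $e_i$ says exactly that the coefficient matrix $(c_{ij})$ has $K^p$-column rank $n$. A hypothetical relation $\sum_i d_i e_i = 0$ with $d_i \in L^p$ rewrites as $\sum_j \bigl(\sum_i d_i c_{ij}\bigr) b_j = 0$; the $L^p$-independence of $m(B)$ forces $\sum_i d_i c_{ij} = 0$ for every $j$, and the rank condition on $(c_{ij})$ then forces $d_i = 0$. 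Equivalently, and more conceptually, the $L^p$-linear span of $m(B)$ inside $L$ is a free $L^p$-module isomorphic to $L^p \otimes_{K^p} E$, so the natural multiplication map $L^p \otimes_{K^p} E \to L$ is injective, which is a standard reformulation of linear disjointness of $E$ and $L^p$ over $K^p$.

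The main (and only) technical point is reconciling the two languages in play: $p$-independence is a statement about $p$-monomials, whereas linear disjointness is a statement about arbitrary elements of $E$. The bridge is precisely the fact that when $B$ is a $p$-basis of $E$ over $K^p$ the set $m(B)$ is a $K^p$-basis of $E$, which lets one transfer independence statements on $m(B)$ to independence statements on arbitrary elements of $E$ via the tensor-product identification above. Once this is in place, nothing in Jacobson's original argument for the case $E = K$ relies on that equality, so the same proof adapts verbatim to the intermediate field $E$.
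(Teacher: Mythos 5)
Your proof is correct and is essentially the argument the paper has in mind: the paper gives no proof of this lemma, instead citing Jacobson for the case $E=K$ and asserting the argument adapts, and your write-up is precisely that adaptation (reduce everything to the $K^p$-basis $m(B)$ of $E$ and use invariance of matrix rank under the field extension $K^p\subseteq L^p$, equivalently injectivity of $L^p\otimes_{K^p}E\to L$). The only cosmetic point is that in $(iii)\Rightarrow(i)$ the natural phrasing is that $(c_{ij})$ has row rank $n$ over $K^p$, hence over $L^p$; this is of course the same as your column-rank statement.
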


In case $E=K$, the number $e\in \NN\cup\{\infty\}$ such that $[K:K^p]=p^e$ is called the degree of imperfection of $K$, aka Ershov invariant (note that when $e=\infty$ the expression $[K:K^p]=p^e$ simply means that the degree of $K/K^p$ is infinite). A $p$-basis for $K$ over $K^p$ is simply referred as a $p$-basis for $K$ (similarly for $p$-independent and $p$-spanning). Furthermore, Lemma~\ref{propE2} yields well-known characterisations of when a field extension $L/K$ is separable.

\ 

The following lemma will be needed in the next section.

\begin{lemma}\label{improved}
Let $E$ be an intermediate field of $K/K^p$. Then, every element of $K$ that is separably algebraic over $E$ is in $E$ (in other words, $E$ is separably algebraically closed in $K$).
\end{lemma}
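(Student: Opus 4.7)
The plan is to exploit the fact that $K^p \subseteq E$, which forces any element $a \in K$ to satisfy a $p$-th power polynomial over $E$ whose root structure in the algebraic closure is trivially nonseparable.

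First, fix $a \in K$ separably algebraic over $E$, and let $f(x) \in E[x]$ be its minimal polynomial. Since $a \in K$ and $K^p \subseteq E$, we have $a^p \in E$, so $a$ is a root of the polynomial $x^p - a^p \in E[x]$. By minimality, $f(x)$ divides $x^p - a^p$ in $E[x]$.

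The key observation is that in $K[x]$ (which contains $a$) we have the factorisation $x^p - a^p = (x-a)^p$, since we are in characteristic $p$. Hence $f(x)$ divides $(x-a)^p$ in $K[x]$, and because $f$ is monic this forces $f(x) = (x-a)^k$ for some $1 \le k \le p$. Now I invoke separability: if $k \ge 2$ then $f$ has $a$ as a repeated root and is not separable, contradicting our assumption. So $k = 1$ and therefore $f(x) = x - a$, which gives $a \in E$.

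There is no real obstacle here; the entire argument rests on the elementary identity $x^p - a^p = (x-a)^p$ combined with minimal polynomial divisibility, so no subtlety about the size or transcendence of $E$ over $K^p$ is involved. The only point worth writing carefully is the transition from divisibility in $E[x]$ to the shape of $f$ in $K[x]$.
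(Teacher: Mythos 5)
Your proof is correct and follows essentially the same route as the paper: both arguments observe that $a^p\in E$ (since $K^p\subseteq E$), so the minimal polynomial of $a$ over $E$ divides $x^p-a^p=(x-a)^p$, and separability then forces that minimal polynomial to have degree one. Your write-up merely makes explicit the factorisation in $K[x]$ that the paper leaves implicit in the phrase ``$g$ has no repeated roots.''
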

\begin{proof}
Let $a\in K$ be separably algebraic over $E$. Then $a$ is a root of $x^p-a^p\in E[x]$. If $g$ is the minimal polynomial of $a$ over $E$, then $x^p-a^p$ is a multiple of $g$. However, $g$ has no repeated roots, and so $g$ must be of degree one. In other words, $a\in E$.  
\end{proof}

\subsection{Differential algebraic preliminaries}\label{diffpreli} Here $(K,\d)$ is a differential field of arbitrary characteristic. If $(x_{i})_{i\in I}$ is a family of (differential) indeterminates, the differential polynomial ring over $K$ is defined as
$$K\{(x_i)_{i\in I}\}:= K[\d^j x_i: i\in I, j\geq 0]$$
with derivation extending that on $K$ and $\d(\d^j x_i)=\d^{j+1}x_i$. An ideal $I$ of $K\{(x_i)_{i\in I}\}$ is called differential if $\d (I)\subseteq I$. Given a subset $A$ of $K\{(x_i)_{i\in I}\}$ the ideal generated by $A$ (in $K\{(x_i)_{i\in I}\}$) will be denoted by $(A)$, the \emph{differential} ideal generated by $A$ will be denoted by $[A]$, the \emph{radical differential} ideal generated by $A$ will be denoted by $\{A\}$. A prime ideal $P$ of $K\{(x_i)_{i\in I}\}$ is called separable if $Frac(K\{(x_i)_{i\in I}\}/P)$ is a separable field extension of $K$, if in addition $P$ is differential we use the terminology \emph{separable prime differential ideal}.

\medskip

Given a differential field extension $(L,\d)/(K,\d)$ and $A\subseteq L$, we denote by $K\{A\}$ and $K\langle A\rangle$ the differential ring and differential field generated by $A$ over $K$, respectively. An element $a$ of $L$ is called differentially algebraic over $K$ if the family $(\d^j a)_{j\geq 0}$ is algebraically dependent over $K$ (in other words, $a$ is a zero of a nontrivial differential polynomial over $K$). The extension $(L,\d)$ is said to be differentially algebraic over $K$ if each element of $L$ is.

We say that the differential field $(K,\d)$ is \emph{non-degenerate} if for each nonzero differential polynomial $f\in K\{x\}$ (in one variable) there is $a\in K$ such that $f(a)\neq 0$. Seidenberg has characterised non-degeneracy in terms of the degree of $K/C_K$.

\begin{lemma}\cite{Seiden52}\label{chardege}
The differential field $(K,\d)$ is nondegenerate if and only if $[K:C_K]=\infty$.
\end{lemma}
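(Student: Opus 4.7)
For ``$\Leftarrow$'' (finite degree implies degeneracy), I would use that $\delta$ is $C_K$-linear -- since $\delta(ca) = c\delta(a)$ for $c \in C_K$ -- and hence is an endomorphism of the finite-dimensional $C_K$-vector space $K$. Its characteristic polynomial over $C_K$ yields a relation $\delta^m + c_{m-1}\delta^{m-1} + \cdots + c_0 = 0$ as operators on $K$, with $c_i \in C_K$ and $m = [K:C_K]$. The linear differential polynomial $f(x) := \delta^m x + c_{m-1}\delta^{m-1} x + \cdots + c_0 x \in K\{x\}$ is then nonzero (its leading coefficient is $1$) and vanishes on all of $K$, witnessing degeneracy.

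For ``$\Rightarrow$'', assume $[K:C_K]=\infty$ and let $f \in K\{x\}$ be nonzero of order $n$, viewed as a polynomial in algebraic indeterminates $y_0, \ldots, y_n$ (where $y_i$ stands for $\delta^i x$). Choose $b_1, \ldots, b_{n+1} \in K$ that are $C_K$-linearly independent. For $c_j \in C_K$, setting $a := \sum_j c_j b_j$ gives $\delta^i a = \sum_j c_j \delta^i b_j$, so $f(a)$ is the value at $(c_1, \ldots, c_{n+1})$ of
\[
F(c_1,\ldots,c_{n+1}) := f\!\left(\textstyle\sum_j c_j b_j,\; \sum_j c_j\delta b_j,\; \ldots,\; \sum_j c_j\delta^n b_j\right) \in K[c_1,\ldots,c_{n+1}].
\]
The plan is to show $F \neq 0$; then, since $C_K$ is infinite (because $[K:C_K]=\infty$ forces $K$ infinite, so in positive characteristic $C_K \supseteq K^p$ is infinite by injectivity of Frobenius, while in characteristic $0$ one has $C_K \supseteq \mathbb{Q}$), we find $c \in C_K^{n+1}$ with $F(c) \neq 0$, and $a = \sum_j c_j b_j$ is the desired witness.

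Nonvanishing of $F$ is equivalent to the rows of the $(n+1) \times (n+1)$ matrix $W := (\delta^i b_j)$ being $K$-linearly independent: this means the linear forms $y_i(c) := \sum_j c_j \delta^i b_j$ are $K$-linearly independent in $K[c_1,\ldots,c_{n+1}]$, so $y_i \mapsto y_i(c)$ defines an injective $K$-algebra morphism $K[y_0,\ldots,y_n] \hookrightarrow K[c_1,\ldots,c_{n+1}]$ carrying $f$ to $F$. A nontrivial $K$-linear row relation would exhibit $b_1, \ldots, b_{n+1}$ as $n+1$ solutions in $K$ of some nonzero linear differential operator $L$ of order $r \le n$. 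The main obstacle is therefore the characteristic-free bound $\dim_{C_K}\ker(L|_K) \le r$, which would contradict $C_K$-linear independence of the $b_j$. I would prove this bound by induction on $r$, after normalizing $L$ to be monic: given a nonzero $y_0 \in \ker L$, the operator $L_1(z) := L(y_0 z)/y_0$ is again monic of order $r$ and satisfies $L_1(1) = L(y_0)/y_0 = 0$, so its constant term vanishes and it factors as $L_1 = M \circ \delta$ with $M$ monic of order $r-1$. The $C_K$-linear map $\ker L \to \ker M$ defined by $y \mapsto \delta(y/y_0)$ has kernel $C_K\,y_0$ (of dimension $1$), and by induction $\dim_{C_K}\ker M \le r-1$, whence $\dim_{C_K}\ker L \le r$ as required.
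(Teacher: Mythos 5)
Your proof is correct. Note that the paper does not prove this lemma at all --- it imports it from Seidenberg's 1952 paper as a black box --- so there is no in-text argument to compare against; what you have written is a complete, self-contained, characteristic-free proof along the standard lines. Both halves check out: the Cayley--Hamilton argument correctly produces a nonzero linear differential polynomial vanishing on all of $K$ when $[K:C_K]=m<\infty$ (using that $\delta$ is $C_K$-linear), and the converse correctly reduces nonvanishing of $f$ on $K$ to the $K$-linear independence of the rows of the matrix $(\delta^i b_j)$, which you guarantee via the bound $\dim_{C_K}\ker(L|_K)\le\operatorname{ord}L$; your inductive proof of that bound via the factorization $L_1=M\circ\delta$ is the standard one and is valid in all characteristics. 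Two cosmetic remarks: you have swapped the labels of the two directions (the first paragraph proves the contrapositive of ``nondegenerate $\Rightarrow$ $[K:C_K]=\infty$'', not the ``$\Leftarrow$'' direction, and vice versa), and the phrase ``nonvanishing of $F$ is equivalent to'' the row independence overstates what you need and prove --- only the implication from row independence to $F\neq 0$ is used. Neither affects correctness.
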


For non-degenerate differential fields, we have the following differential version of the primitive element theorem.

\begin{theorem}\label{diffPET}\cite[Differential Primitive Element Theorem]{Seiden52}
Suppose $(K,\d)$ is non-degenerate. If $L=K\langle a_1,\dots,a_n\rangle$ with each $a_i$ differentially algebraic over $K$ and $L/K$ separable, then there is $a\in L$ such that $L=K\langle a\rangle$.
\end{theorem}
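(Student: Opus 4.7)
I would proceed by induction on the number of generators $n$. The base case $n=1$ is trivial, and the induction step reduces to $n=2$: applying the theorem inductively to $K\langle a_1,\dots,a_{n-1}\rangle$ produces a single generator $b$, and then the case $n=2$ applied to $b$ and $a_n$ finishes. Note that along the way $K$ itself (the base of the non-degeneracy assumption) does not change, and that separability and differential algebraicity of the ambient extension are preserved.

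For $n=2$, write $L=K\langle a,b\rangle$ and look for $c\in K$ with $L=K\langle a+cb\rangle$. Since $L/K$ is separable, $a$ admits a minimal separable differential polynomial $f\in K\{y\}$ of some order $m$, and $b$ admits one, $g\in K\langle a\rangle\{z\}$, of some order $r$, with both separants $s_f(a),s_g(b)$ nonzero. Using these, all higher derivatives of $a$ (resp.\ $b$) become rational in $a,\dots,a^{(m)}$ (resp.\ in $b,\dots,b^{(r)}$ over $K\langle a\rangle$), so $L=K(a,a',\dots,a^{(m)},b,b',\dots,b^{(r)})$ is a finitely generated separable field extension of $K$ of transcendence degree $m+r$.

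Now introduce a differential indeterminate $t$ over $L$ and set $\theta=a+tb\in L\langle t\rangle$. The key assertion is $L\langle t\rangle=K\langle t\rangle\langle\theta\rangle$. Granting this, there exist $F,G,H\in K\{z,w\}$ with $F(\theta,t)\neq 0$ in $L\langle t\rangle$ and
$$F(\theta,t)\cdot a=G(\theta,t),\qquad F(\theta,t)\cdot b=H(\theta,t).$$
To specialise, expand $F(a+tb,t)\in L\{t\}$ as $\sum_{\mu}\gamma_\mu M_\mu(t)$, where the $M_\mu$ are distinct monomials in the derivatives of $t$ and the $\gamma_\mu\in L$ span a finite-dimensional $K$-subspace $V\subseteq L$ with basis $v_1,\dots,v_s$. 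Writing $\gamma_\mu=\sum_i\lambda_{\mu,i}v_i$ with $\lambda_{\mu,i}\in K$ yields
$$F(a+tb,t)=\sum_{i=1}^{s} v_i\cdot P_i(t),\qquad P_i(t):=\sum_{\mu}\lambda_{\mu,i}M_\mu(t)\in K\{t\},$$
and since some $\gamma_\mu\neq 0$, some $P_i$ is a nonzero element of $K\{t\}$. Non-degeneracy of $K$ (Lemma~\ref{chardege}) supplies $c\in K$ with $P_i(c)\neq 0$; by the $K$-linear independence of $v_1,\dots,v_s$ this forces $F(a+cb,c)\neq 0$, so specialising $t\mapsto c$ in the two identities above gives $a,b\in K\langle a+cb\rangle$, whence $L=K\langle a+cb\rangle$.

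The principal obstacle is the generic claim $L\langle t\rangle=K\langle t\rangle\langle\theta\rangle$. One proves it by combining the linear identities
$$\theta^{(k)}=a^{(k)}+\sum_{j=0}^{k}\binom{k}{j}t^{(k-j)}b^{(j)}$$
with the algebraic relations $f(a)=0$, $g(b)=0$ and their prolongations; the non-vanishing separants $s_f(a), s_g(b)$ guarantee that the associated Jacobian has full rank over $K\langle t\rangle$, which lets one solve for $a$ and $b$, and hence all their derivatives, as differential rational functions of $\theta$ over $K\langle t\rangle$. This is where the separability hypothesis enters decisively, and it is the step whose details require the most care.
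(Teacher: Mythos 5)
The paper does not prove this statement at all: it is imported verbatim from Seidenberg \cite{Seiden52}, so there is no internal proof to compare against. Your overall strategy is nevertheless the classical one (a generic combination $\theta=a+tb$ over a differential indeterminate $t$, followed by specialisation), and the specialisation half is correct and complete: the decomposition $F(a+tb,t)=\sum_i v_i P_i(t)$ with $P_i\in K\{t\}$ and the $v_i$ linearly independent over $K$ is exactly where non-degeneracy (Lemma~\ref{chardege}) enters, and you execute that step carefully.

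There are, however, two genuine gaps. The smaller one: you assert that $b$ has a minimal differential polynomial $g$ over $K\langle a\rangle$ with $s_g(b)\neq 0$. That requires $K\langle a,b\rangle/K\langle a\rangle$ to be separable, which does \emph{not} follow from $L/K$ separable. For example, let $a$ be a constant transcendental over $K$ and $b$ a $p$-th root of $a$ with $\d b=0$ (Fact~\ref{extendder}); then $L=K(b)=K\langle a,b\rangle$ is purely transcendental, hence separable, over $K$, yet $L/K\langle a\rangle=K(b)/K(b^p)$ is purely inseparable. (The theorem still holds here, with primitive element $b$; the point is that the tower of generators must be chosen more carefully before your setup applies.) The serious gap is the central claim $L\langle t\rangle=K\langle t\rangle\langle\theta\rangle$. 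Your justification --- the Jacobian has full rank because the separants are nonzero, ``which lets one solve for $a$ and $b$'' --- is not a valid inference: there is no implicit function theorem for abstract fields, and a full-rank Jacobian yields at most that $a$ and $b$ are \emph{separably algebraic} over $K\langle t,\theta\rangle$, not that they lie in it. To get membership one needs the differential analogue of the gcd step in the algebraic primitive element theorem: show that $(a,b)$ is the \emph{unique} common zero of the relevant system over an algebraic closure, exploiting that $t$ is differentially transcendental over $L$ (any other candidate $b'$ would force a nontrivial algebraic relation among $t,\d t,\dots$ over $L$), and then use separability to see that this unique common zero is simple and hence rational over $K\langle t,\theta\rangle$. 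That uniqueness argument is the heart of Seidenberg's proof and is absent from yours.
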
 

In \cite[\S III.4]{Kolbook}, Kolchin proves several versions of the differential basis theorem, we state the one that suits our purposes.

\begin{theorem}\cite[\S III.4, Differential Basis Theorem]{Kolbook}\label{diffbasis}
Let $\bar x=(x_1,\dots,x_n)$ be a finite tuple of differential variables. If $P$ is a separable prime differential ideal of $K\{\bar x\}$, then $P$ is finitely generated as a radical differential ideal. In other words, there are $f_1,\dots,f_s\in P$ such that $P=\{f_1,\dots,f_s\}$.
\end{theorem}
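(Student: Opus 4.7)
I would follow the classical Ritt--Raudenbush strategy, adapting it to positive characteristic by using the separability hypothesis in place of characteristic zero. The approach combines a Zorn-type maximality argument with characteristic-set reduction.

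Assume for contradiction that the collection of separable prime differential ideals of $K\{\bar x\}$ which are not finitely generated as radical differential ideals is nonempty. Applying Zorn's lemma to this family (ordered by inclusion, using that an ascending union of separable prime differential ideals remains separable prime), I obtain a maximal element $P$.

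Fix a Kolchin orderly ranking on the derivatives of $x_1,\dots,x_n$ and choose a characteristic set $A=\{A_1,\dots,A_r\}$ of $P$, i.e.\ an autoreduced subset of $P$ of minimal rank. Let $H_A=\prod_{i=1}^{r} s_{A_i}\,i_{A_i}$ denote the product of the separants and initials of the $A_i$. The crucial step---and the one where separability is indispensable---is to show $H_A\notin P$. If some $s_{A_i}$ lay in $P$, a Ritt reduction of $A_i$ modulo the other elements of $A$ would yield an element of $P$ of strictly lower rank than $A_i$, contradicting minimality; the separability of $\operatorname{Frac}(K\{\bar x\}/P)$ over $K$ is what prevents the characteristic-$p$ pathology whereby $s_{A_i}$ vanishes identically (as would happen if $A_i$ were a $p$-th power in its leader). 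A parallel argument, using autoreducedness of $A$, handles the initials $i_{A_i}$.

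With $H_A\notin P$ in hand, I would invoke Rosenfeld's lemma in its separable, positive-characteristic form (due to Kolchin) to obtain the identity
$$P \;=\; [A]:H_A^{\infty}.$$
By maximality of $P$, for each irreducible factor $h$ of $H_A$ the strictly larger radical differential ideal $\{P,h\}$ is finitely generated as a radical differential ideal. Combining those finite generating sets with $A_1,\dots,A_r$ and using the displayed identity above produces a finite radical-differential generating set for $P$, contradicting the choice of $P$.

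The main obstacle I anticipate is the separability step. Both verifying $H_A\notin P$ via Ritt reduction and establishing Rosenfeld's identity in positive characteristic hinge on a careful analysis of how partial reduction modulo a characteristic set interacts with the separability of the residue field; this is precisely the place where the characteristic-zero argument fails without the separability hypothesis. Once these ingredients are in place, the contradiction follows essentially formally from Zorn's lemma.
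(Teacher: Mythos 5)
The paper does not prove this statement: it is quoted directly from Kolchin \cite[\S III.4]{Kolbook}, so there is no in-paper argument to measure yours against, and I am assessing your sketch on its own terms. Much of your machinery is the right machinery: chains of separable prime differential ideals do have separable prime, non-finitely-generated upper bounds, so the Zorn step produces a maximal $P$; separability is indeed what forces the separants of a characteristic set $A$ of $P$ to lie outside $P$ (this is exactly the phenomenon the paper isolates in Lemma~\ref{part1} in the one-variable case); and the identity $P=[A]:H_A^{\infty}$ follows from Ritt reduction once $H_A\notin P$.

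The genuine gap is the sentence ``by maximality of $P$, \dots the strictly larger radical differential ideal $\{P,h\}$ is finitely generated.'' You applied Zorn's lemma to the family of \emph{separable prime} differential ideals without finite basis, so maximality of $P$ only tells you that every separable prime differential ideal strictly containing $P$ has a finite basis. The ideal $\{P,h\}$ is merely a radical differential ideal: it is in general not prime, and its minimal prime components, while they do properly contain $P$ (since $h\notin P$), need not be separable and need not be finite in number, so nothing in your setup forces $\{P,h\}$ to have a finite basis. In the classical characteristic-zero argument this step works precisely because the induction is run over \emph{all} radical differential ideals; in characteristic $p$ that larger family is not available, since the Ritt--Raudenbush theorem for general radical differential ideals requires the Ritt-algebra (characteristic-zero) hypothesis --- which is the very reason the present statement is restricted to separable primes. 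As written, the final contradiction therefore does not go through; the argument has to be organized around a different induction (as Kolchin does) rather than patched at this single step.
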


Let $\bar \alpha=(\alpha_1,\dots,\alpha_n)$ be an $n$-tuple from a differential field extension of $K$. The defining differential ideal of $\bar \alpha$ over $K$ is the prime differential ideal of $K\{\bar x\}$, where $\bar x=(x_1,\dots,x_n)$, defined as
$$I_\d(\bar \alpha/K)=\{f \in K\{\bar x\}:  f(\bar\alpha)=0\}.$$
A differential specialisation of $\bar \alpha$ over $K$ is an $n$-tuple $\bar \beta$ from some differential field extension of $K$ with $I_\d(\bar \alpha/K)\subseteq I_\d(\bar\beta/K)$. If we have equality of the defining differential ideals we say that $\bar \beta$ is a generic differential specialisation of $\bar \alpha$ over $K$.

\begin{definition}\cite[\S III.10]{Kolbook}
Let $\bar \alpha$ be an $n$-tuple from a differential field extension of $K$. We say that $\bar \alpha$ is constrained over $K$ if $K\langle \bar \alpha\rangle/K$ is separable and there exists $g\in K\{\bar x\}$ with $g(\bar \alpha)\neq 0$ such that for every differential specialisation $\bar \beta$ of $\bar \alpha$ over $K$, with $K\langle \bar\beta \rangle/K$ separable, if $g(\bar \beta)\neq 0$ then $\bar\beta$ is a generic differential specialisation. We call $g$ a constraint of $\bar \alpha$ over $K$.
 \end{definition}
 
 We summarise some basic properties of constrained tuples. 
 
 \begin{lemma}\cite[\S III.10]{Kolbook}\label{specialise}
 Let $\bar a$ be a (finite) tuple from a differential field extension of $K$ with $K\langle \bar a\rangle/K$ separable. Suppose $g\in K\{\bar x\}$ is such that $g(\bar a)\neq 0$. Then, there is a differential specialisation $\bar \alpha$ of $\bar a$ over $K$ such that $\bar \alpha$ is constrained over $K$ with constraint $g$.
 \end{lemma}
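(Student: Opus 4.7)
The plan is to apply a maximality argument to the family of separable prime differential ideals that extend the defining ideal of $\bar a$ and avoid $g$, and take a generic zero of a maximal element. Write $I = I_\d(\bar a/K)$ and set
\[
\Sigma = \{P \subseteq K\{\bar x\} : P \text{ is a separable prime differential ideal},\ I \subseteq P,\ g \notin P\}.
\]
Because $K\langle \bar a\rangle/K$ is separable, $I$ is itself a separable prime differential ideal; and $g(\bar a)\neq 0$ gives $g\notin I$. Hence $I \in \Sigma$, so $\Sigma$ is nonempty.

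Next I would show that $(\Sigma,\subseteq)$ has a maximal element. Given a chain $(P_\iota)$ in $\Sigma$, the union $P = \bigcup_\iota P_\iota$ is a differential ideal and, by the standard chain argument, is prime; clearly $g\notin P$. For separability, failure of $\mathrm{Frac}(K\{\bar x\}/P)^p$ and $K$ being linearly disjoint over $K^p$ would be witnessed by finitely many representatives in $K\{\bar x\}$, which would already fail the corresponding disjointness modulo some single $P_\iota$ in the chain, contradicting separability of that $P_\iota$. Hence $P\in\Sigma$ is an upper bound, and Zorn's lemma yields a maximal element $M\in\Sigma$.

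Letting $\bar\alpha$ be a generic zero of $M$ (an $n$-tuple from a differential field extension of $K$ with $I_\d(\bar\alpha/K)=M$), the inclusion $I\subseteq M$ shows that $\bar\alpha$ is a differential specialisation of $\bar a$ over $K$; separability of $M$ gives that $K\langle\bar\alpha\rangle/K$ is separable; and $g\notin M$ gives $g(\bar\alpha)\neq 0$. To verify the constraint property, let $\bar\beta$ be a differential specialisation of $\bar\alpha$ with $K\langle\bar\beta\rangle/K$ separable and $g(\bar\beta)\neq 0$. Then $I_\d(\bar\beta/K)$ is a separable prime differential ideal containing $M$ and avoiding $g$, so $I_\d(\bar\beta/K)\in\Sigma$; maximality of $M$ forces $I_\d(\bar\beta/K)=M=I_\d(\bar\alpha/K)$, i.e., $\bar\beta$ is a generic differential specialisation of $\bar\alpha$.

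The main obstacle is the separability step in verifying the upper bound in Zorn: propagating separability through an ascending union of prime differential ideals. An alternative route would be to invoke an ascending chain condition on separable prime differential ideals of $K\{\bar x\}$ (via the Differential Basis Theorem, Theorem~\ref{diffbasis}) to produce a maximal element of $\Sigma$ directly; but that ACC argument still requires controlling separability in limits, so the essential technical content resides in that point.
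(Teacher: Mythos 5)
Your argument is correct. Note that the paper gives no proof of this lemma at all --- it is quoted from Kolchin \cite[\S III.10]{Kolbook} --- so there is nothing in the text to compare against; but your Zorn's-lemma argument on the poset of separable prime differential ideals containing $I_\d(\bar a/K)$ and avoiding $g$ is a complete and essentially standard proof, and is in the same spirit as Kolchin's (who gets the maximal element from the ascending chain condition on separable prime differential ideals, via the basis theorem, rather than from Zorn). The one step you flag as delicate --- separability of the union of a chain --- does go through exactly as you sketch: a failure of linear disjointness of $K$ and $L^p$ over $K^p$ for $L=\mathrm{Frac}(K\{\bar x\}/P)$ is witnessed by a relation $\sum_i c_i(f_i/h)^p=0$ with $c_i\in K$ linearly independent over $K^p$, $f_i,h\in K\{\bar x\}$, $h\notin P$ and some $f_{i_0}\notin P$; the single element $\sum_i c_i f_i^p$ then lies in some $P_\iota$ of the chain while $h$ and $f_{i_0}$ stay outside every $P_\iota$, contradicting separability of $P_\iota$. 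The verification of the constraint property from maximality is exactly right, since any separable specialisation $\bar\beta$ of $\bar\alpha$ with $g(\bar\beta)\neq 0$ has $I_\d(\bar\beta/K)$ again in your family $\Sigma$.
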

 
 \begin{lemma}\cite[\S III.10]{Kolbook}\label{propconstrained} Let $\bar \alpha$ be a constrained tuple over $K$.
\begin{enumerate}
\item If $\bar \beta$ is such that $K\langle \bar \alpha\rangle=K\langle \bar \beta\rangle$, then $\bar \beta$ is constrained over $K$.
\item If $\bar \beta$ is a tuple from $K\langle \bar \alpha\rangle$ and the extension $K\langle \bar \alpha\rangle/K\langle\bar \beta\rangle$ separable, then $\bar \beta$ is constrained over $K$.
\item If $char(K)=p$ then $C_{K\langle \bar \alpha\rangle}$ is separably algebraic over $K\langle \bar \alpha\rangle^p\cdot C_K$.
\end{enumerate} 
 \end{lemma}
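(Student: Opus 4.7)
The plan is to tackle each clause in turn via manipulations of constraints, expecting (3) to be the technical crux. For (1), I would construct a constraint for $\bar\beta$ from one for $\bar\alpha$: writing each $\alpha_i = P_i(\bar\beta)/Q_i(\bar\beta)$ and each $\beta_j = R_j(\bar\alpha)/S_j(\bar\alpha)$ with $P_i,Q_i\in K\{\bar y\}$ and $R_j,S_j\in K\{\bar x\}$ (possible since $K\langle\bar\alpha\rangle = K\langle\bar\beta\rangle$), I would let $h(\bar y)\in K\{\bar y\}$ be a suitable cleared-denominator product involving $g(P/Q)$, $\prod_i Q_i(\bar y)$, and $\prod_j S_j(P/Q)$. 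Any differential specialisation $\bar\beta\mapsto\bar\beta'$ over $K$ with $h(\bar\beta')\neq 0$ and $K\langle\bar\beta'\rangle/K$ separable then yields $\bar\alpha' := P(\bar\beta')/Q(\bar\beta')$ with $g(\bar\alpha')\neq 0$ and $K\langle\bar\alpha'\rangle/K$ separable; constrainedness of $\bar\alpha$ forces $\bar\alpha'$ generic, and the $R_j/S_j$ then transfer genericity to $\bar\beta'$.

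For (2), I would reduce to (1). Choose a tuple $\bar\gamma$ from $K\langle\bar\alpha\rangle$ with $K\langle\bar\beta,\bar\gamma\rangle = K\langle\bar\alpha\rangle$; by (1), $(\bar\beta,\bar\gamma)$ is constrained over $K$ with some constraint $h(\bar y,\bar z)$. The task is then to eliminate $\bar z$ from $h$ to produce a constraint in $\bar y$ alone: separability of $K\langle\bar\alpha\rangle/K\langle\bar\beta\rangle$ ensures each $\gamma_k$ admits a separable minimal differential polynomial over $K\langle\bar\beta\rangle$, and using these successively I would eliminate the $z_k$ one variable at a time via a Kolchin-style elimination. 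Separability of $K\langle\bar\beta\rangle/K$ itself is automatic, as $K\langle\bar\beta\rangle$ is a sub-extension of the separable $K\langle\bar\alpha\rangle/K$.

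For (3), my first observation is that in characteristic $p>0$ the identity $(x-c)^p = x^p - c^p$ implies that a constant $c\in C_{K\langle\bar\alpha\rangle}$ is separably algebraic over $E := K\langle\bar\alpha\rangle^p\cdot C_K$ if and only if $c\in E$ (its minimal polynomial over $E$ divides $x^p - c^p$ and is separable, hence equals $x - c$); so the claim reduces to $C_{K\langle\bar\alpha\rangle}\subseteq E$. For this I would invoke Lemma~\ref{propE2}, aiming to show that a $p$-basis of $C_K$ over $K^p$ remains $p$-independent as a subset of $K\langle\bar\alpha\rangle$ over $K\langle\bar\alpha\rangle^p$; combining this with the differential basis theorem (Theorem~\ref{diffbasis}) and the constraint $g$, I would argue that any would-be constant in $K\langle\bar\alpha\rangle\setminus E$ gives rise to a non-generic separable differential specialisation of $\bar\alpha$ avoiding $g$, contradicting constrainedness. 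The main obstacle is precisely this last step: rigorously producing such a specialisation requires delicate bookkeeping between the purely inseparable $p$-th-power structure of the constants and the Kolchin constraint theory, and is where I would expect most of the effort to go.
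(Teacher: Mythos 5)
The paper gives no proof of this lemma at all: it is imported directly from Kolchin \cite[\S III.10]{Kolbook}, so there is no in-paper argument to compare yours against. Judged on its own terms, your sketch of (1) is sound and is the standard argument (express each tuple differential-rationally in the other, clear denominators into one constraint $h$, and transfer genericity both ways), but parts (2) and (3) contain genuine gaps.

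In (2), the step you call ``eliminate the $z_k$ one variable at a time via a Kolchin-style elimination'' is where the entire difficulty of the statement lives, and it is not supplied. Two concrete problems: first, you assume each $\gamma_k$ has a separable minimal differential polynomial over $K\langle\bar\beta\rangle$, i.e.\ is differentially algebraic over $K\langle\bar\beta\rangle$; this is not among the hypotheses, and in this paper the differential algebraicity of constrained tuples (Lemma~\ref{diffalgentries}) is itself deduced from part (2) of the present lemma, so invoking it here is circular. Second, even granting that, what must actually be proved is that a separable differential specialisation $\bar\beta\mapsto\bar\beta'$ avoiding a suitable $h'$ extends to a separable differential specialisation $(\bar\beta',\bar\gamma')$ of the whole tuple avoiding $h$ --- that is, the \emph{entire} defining differential ideal of $(\bar\beta,\bar\gamma)$, not just the chosen minimal polynomials, must vanish at $(\bar\beta',\bar\gamma')$. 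That requires the differential basis theorem plus a genuine extension-of-specialisations argument, neither of which appears in the sketch. In (3), your opening reduction is correct and is exactly Lemma~\ref{improved} / Remark~\ref{better} of the paper (separably algebraic over $E=K\langle\bar\alpha\rangle^p\cdot C_K$ is equivalent to lying in $E$), but you then explicitly concede that the remaining step --- manufacturing, from a hypothetical constant $c\in C_{K\langle\bar\alpha\rangle}\setminus E$, a separable non-generic specialisation of $\bar\alpha$ avoiding the constraint $g$ --- is left undone; that step is the whole content of (3). As it stands the proposal establishes (1) and only outlines (2) and (3).
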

 
 \begin{remark}\label{better}
 We note that part (3) of the previous lemma can be improved to the following. If $\bar \alpha$ is constrained over $K$, then $C_{K\langle \bar \alpha\rangle}=K\langle \bar \alpha\rangle^p\cdot C_K$. Indeed, this follows from the Lemma~\ref{improved} (by taking $E=K\langle \bar \alpha\rangle^p\cdot C_K$).
 \end{remark}
 
 \begin{lemma}\label{diffalgentries}
 If $\bar \alpha=(\alpha_1,\dots,\alpha_n)$ is constrained over $K$, then each $\alpha_i$ is differentially algebraic over $K$.
 \end{lemma}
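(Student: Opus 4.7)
The plan is to argue by contradiction. Suppose $\bar\alpha = (\alpha_1,\ldots,\alpha_n)$ is constrained over $K$ with constraint $g$, yet $K\langle\bar\alpha\rangle/K$ has positive differential transcendence degree. Using Lemma~\ref{propconstrained}(1) to replace $\bar\alpha$ by any tuple generating the same differential field, and applying Theorem~\ref{diffPET} over a differential transcendence basis (which is non-degenerate since the transcendence degree is positive) to consolidate the differentially algebraic part into a single coordinate, we may arrange that $\alpha_n$ is differentially transcendental over $L:=K\langle\alpha_1,\ldots,\alpha_{n-1}\rangle$.

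The key structural observation is that, since $\alpha_n$ is differentially transcendental over $L$, the defining differential ideal $P:=I_\d(\bar\alpha/K)$ consists precisely of those $f \in K\{\bar x\}$ for which, writing $f=\sum_J c_J\,m_J(x_n)$ with $c_J \in K\{x_1,\ldots,x_{n-1}\}$ and $m_J$ ranging over differential monomials in $x_n$, each coefficient $c_J$ lies in $P':=I_\d((\alpha_1,\ldots,\alpha_{n-1})/K)$. In particular, $\tilde g := g(\alpha_1,\ldots,\alpha_{n-1},x_n) \in L\{x_n\}$ is nonzero. The strategy is to construct an element $\eta$ in a differential field extension of $L$ satisfying (i) $L\langle\eta\rangle/L$ is separable, (ii) $\tilde g(\eta)\neq 0$, and (iii) $\eta$ is differentially algebraic over $L$. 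Given such $\eta$, the tuple $\bar\beta:=(\alpha_1,\ldots,\alpha_{n-1},\eta)$ is a differential specialisation of $\bar\alpha$ over $K$ by the structural description of $P$; it satisfies $g(\bar\beta)=\tilde g(\eta)\neq 0$; $K\langle\bar\beta\rangle/K$ is separable (since $L/K$ is separable, being a sub-extension of the separable $K\langle\bar\alpha\rangle/K$, and $L\langle\eta\rangle/L$ is separable); and clearing denominators in a nonzero $h \in L\{x_n\}$ with $h(\eta)=0$ produces $H \in K\{\bar x\}$ with $H(\bar\beta)=0$ but $H(\bar\alpha)\neq 0$ (the latter because $\alpha_n$ is differentially transcendental over $L$ and $h$ is nonzero). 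Hence $\bar\beta$ is a non-generic separable specialisation at which $g$ does not vanish, contradicting the constraint property of $\bar\alpha$.

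The main obstacle is producing $\eta$. When $L$ is non-degenerate, Lemma~\ref{chardege} delivers some $c \in L$ with $\tilde g(c)\neq 0$ and we set $\eta:=c$. In the general case, let $N$ be the order of $\tilde g$ in $x_n$, and let $a$ denote the image of $x_n$ in the quotient $L\{x_n\}/[\d^{N+1}x_n-1]$, which is isomorphic to the polynomial ring $L[x_n,\d x_n,\ldots,\d^N x_n]$. The fraction field of this domain is purely transcendental (hence separable) over $L$; $a$ satisfies $\d^{N+1}a=1$; and $a,\d a,\ldots,\d^N a$ are algebraically independent over $L$, so $\tilde g(a)\neq 0$. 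Applying Lemma~\ref{specialise} to $a$ with constraint $\tilde g$ yields a differential specialisation $\eta$ of $a$ over $L$ constrained with constraint $\tilde g$: this immediately gives properties (i) and (ii), and since $\eta$ is a specialisation of $a$ it inherits the relation $\d^{N+1}\eta=1$, so is differentially algebraic over $L$, giving (iii). This furnishes the required $\eta$ and produces the desired contradiction.
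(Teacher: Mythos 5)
Your endgame is sound and is essentially the paper's: specialise a differentially transcendental coordinate to a differentially algebraic element lying in a separable extension and at which the constraint does not vanish, contradicting genericity. (The paper uses the quotient by $[\d^k x]$ where you use $[\d^{N+1}x_n-1]$; both work, and your extra appeal to Lemma~\ref{specialise} is not needed, since $a$ itself already has properties (i)--(iii).) The problem is the opening reduction. You claim that, after replacing $\bar\alpha$ by another generating tuple via Lemma~\ref{propconstrained}(1) and consolidating the differentially algebraic part by Theorem~\ref{diffPET}, one may assume $\alpha_n$ is differentially transcendental over $L=K\langle\alpha_1,\dots,\alpha_{n-1}\rangle$. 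This is not justified and can fail: if $\bar\beta=(\beta_1,\dots,\beta_m)$ is a separating differential transcendence basis and $\gamma$ a differential primitive element for $K\langle\bar\alpha\rangle$ over $K\langle\bar\beta\rangle$, the reordered tuple $(\gamma,\beta_1,\dots,\beta_m)$ need not have its last coordinate differentially transcendental over the field generated by the preceding ones, because $\gamma$ will typically be differentially transcendental over $K$ itself. For instance, with $m=1$ and $\gamma=\beta_1+c$ for $c$ a new constant, one has $\d^j\gamma=\d^j\beta_1$ for $j\geq 1$, so $\beta_1$ satisfies the nonzero differential polynomial $\d x-\d\gamma$ over $K\langle\gamma\rangle$ and is differentially algebraic over it. Since your structural description of $I_\d(\bar\alpha/K)$, the nonvanishing of $\tilde g$ in $L\{x_n\}$, and the non-genericity of $\bar\beta$ all rest on this arrangement, the gap is essential rather than cosmetic.

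The repair is the route the paper takes: by Lemma~\ref{propconstrained}(2) the separating basis $\bar\beta$ is itself constrained over $K$ (as $K\langle\bar\alpha\rangle/K\langle\bar\beta\rangle$ is separable), and then $\beta_1$ alone is constrained over $K$ (as $K\langle\bar\beta\rangle/K\langle\beta_1\rangle$ is separable, $\bar\beta$ being differentially algebraically independent). Running your specialisation argument on the single element $\beta_1$ over $K$ --- whose defining differential ideal is $(0)$, so that every element of every differential extension is automatically a specialisation --- yields the contradiction with no need for the coefficientwise analysis, the reordering, or Theorem~\ref{diffPET}.
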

 \begin{proof}
 Since $K\langle\bar \alpha\rangle/K$ is separable, by the Separating Differential Transcendence Basis Theorem \cite[\S II.10]{Kolbook} there exists $\bar \beta=(\beta_1,\dots,\beta_m)$ a differential transcendence basis of $K\langle\bar \alpha\rangle$ over $K$ such that the extension $K\langle\bar \alpha\rangle/K\langle\bar \beta\rangle$ is separable. It suffices to show that $\bar\beta$ is empty. Towards a contradiction, assume it is not empty. By part~(2) of Lemma~\ref{propconstrained}, $\bar\beta$ is also constrained over $K$. Furthermore, since the extension $K\langle\bar \beta\rangle/K\langle\beta_1\rangle$ is separable (as $\bar \beta$ is differentially algebraically independent), part (2) of Lemma~\ref{propconstrained} yields that $\beta_1$ is constrained over $K$. 
 
 Let $g\in K\{x\}$ be a constraint for $\beta_1$. Let $k$ be larger than any $j$ such that $\d^jx$ appears in $g$, and let $P=[\d^k x]$ be the differential ideal of $K\{x\}$ generated by $\d^k x$. Clearly, $P$ is a separable prime differential ideal with $g\notin P$. Thus, setting $a=x+P$ in the extension $Frac(K\{x\}/P)$ we get that $K\langle a\rangle/K$ is separable and $f(a)=0$ and $g(a)\neq 0$.  Since $\beta_1$ is differentially transcendental, its defining differential ideal is zero, and so $I_\d(\beta_1/K)\subseteq I_\d(a/K)$. It follows that $a$ is a generic differential specialisation of $\beta_1$ over $K$; however, $\d^kx \in I_\d(a/K)$, which contradicts the fact that $I_\d(\beta_1/K)=(0)$.
 \end{proof}
 
 \medskip
 
In Section \ref{describeideals}, we will be dealing with the differential polynomial ring in one variable $K\{x\}$, and in this case we write $x_j$ instead of $\d^jx$ and hence identify $K\{x\}=K[x,\d x,\dots]$ with $K[x_0,x_1,\dots]$.

Let $f\in K\{x\}\setminus K$. The order of $f$, denoted $\ord(f)$, is the highest $n$ such that $x_n$ appears in $f$. In this case, we call $x_n$ the leader of $f$. The degree of $f$, denoted $\deg(f)$, is the degree of $x_n$ in $f$ where $x_n$ is the leader of $f$. The rank of $f$ is $\rk(f)=(\ord f, \deg f)$ and we rank differential polynomials lexicographically on $\rk f$.

The separant of $f$, denoted $s_f$, is defined as the partial derivative of $f$ with respect to its leader. One can (uniquely) write $f$ in the form
$$f=\sum_{i=0}^d g_i(x) x_n^i$$
where $x_n$ is the leader of $f$, $d=\deg f$, and $\ord g_i<\ord f$. The differential polynomial $g_d$ is called the initial of $f$, denoted $i_f$. One readily checks, $\rk s_f$ and $\rk i_f$ are strictly smaller than $\rk f$.

\subsection{Model-theoretic preliminaries}\label{modelpreli} We recall the model-theoretic set-up and (some) results of the theories SCF$_{p,e}$ and DCF$_p$. The intention is to give an idea of the tools and notions that we will need to extend to the context of the theory $\SDCFe$ (separably differentially closed fields) in Sections~\ref{completions} and \ref{finalprop}. 

Let $p>0$ and $e\in\NN\cup\{\infty\}$. The theory of separably closed fields of characteristic $p$ and degree of imperfection $e$, SCF$_{p,e}$, is known to be complete \cite{Ershov67} and stable \cite{Wood79}. Furthermore, there is a natural language (expansion of the field language) in which it admits quantifier elimination. For finite $e$, we add symbols $\bar a=(a_1,\dots,a_e)$ for a $p$-basis and, after enumerating the $p$-monomials 
$$m(\bar a)=\{m_0(\bar a),\dots,m_s(\bar a)\} \quad \text{ where } s=p^e-1,$$
we add symbols for the $\lambda$-functions $\lambda_0,\dots,\lambda_s$. Recall that these are definable functions determined by
$$b=\lambda_0(b)^pm_0(\bar a)+\cdots+\lambda_s(b)^pm_s(\bar a)$$
for $b\in K$, where $K$ is any field with $p$-basis $\bar a$. The theory of separably closed fields with $p$-basis $\bar a$ and $\lambda$-functions $\lambda_0,\dots,\lambda_s$ is denoted SCF$_{p,e}^\lambda$.

For infinite $e$, we add countably many function symbols for the following $\lambda$-functions. For each $n\in\NN$, let $s_n=p^n-1$ and fix an enumeration of the $p$-monomials (as functions) 
$$\{m_0,\dots,m_{s_n}\}.$$ 
For each field $K$ with $e(K)=\infty$, the functions $\lambda_{n,i}:K^{n+1}\to K$, for $i=0,\dots,s_n$, are defined by: let $(a_1,\dots,a_n;b)\in K^{n+1}$, if $(a_1,\dots,a_n)$ are $p$-dependent or $(a_1,\dots,a_n;b)$  are $p$-independent, then $\lambda_{n,i}(a_1,\dots,a_n;b)=0$ for $i=0,\dots,s_n$; otherwise, they are (uniquely) determined by
$$b=\lambda_{n,0}(a_1,\dots,a_n;b)^pm_0(\bar a)+\cdots+\lambda_{n,s_n}(a_1,\dots,a_n;b)^pm_{s_n}(\bar a).$$
The theory of separably closed fields of infinite degree of imperfection equipped with these $\lambda$-functions $\lambda_{n,i}$, $n\in \NN, i=0,\dots,s_n$, is denoted SCF$_{p,\infty}^\lambda$.

In either case, $e$ finite or infinite, SCF$_{p,e}^\lambda$ admits quantifier elimination~\cite{Delon88}. 

\

We now move on to the differential context and work in the language of differential fields. A differential field $(K,\d)$ of characteristic $p>0$ is called \emph{differentially perfect} if $C_K=K^p$, where $C_K$ denotes the field of $\d$-constants of $K$. In \cite[\S II.3]{Kolbook} it shown that $(K,\d)$ is differentially perfect if and only if every differential field extension is a separable extension.

The theory of differentially closed fields in characteristic $p>0$, DCF$_p$, is the model-companion of the theory of differential fields of characteristic $p$ and also the model-completion of the theory of differentially perfect fields of characteristic $p$. Furthermore, DCF$_p$ is complete and stable \cite{Wood74,Wood76}.

There is a natural language in which DCF$_p$ admits quantifier elimination. One simply needs to add the $p$-th root function on constants. More precisely, for a differentially perfect $(K,\d)$, we let $r:K\to K$ be defined as
$$
\left\{
\begin{matrix}
\; r(b)=0 & \text{ if }b\notin C_K \\
\quad b= r(b)^p & \text{if } b\in C_K
\end{matrix}
\right.
$$
for any $b\in K$. After adding this function, we denote the theory by DCF$_{p}^r$. In \cite{Wood76}, it is shown that DCF$_p^r$ has quantifier elimination. We note that the function~$r$ can be thought of as a ``differential'' $\lambda$-function (in the case of the differential degree of imperfection zero). This is explained in Section~\ref{qefinite}; in particular, see Remark~\ref{qedcfp}.

\section{On separable prime differential ideals of $K\{x\}$}\label{describeideals}

In this section, we provide a description of those prime differential ideals of $K\{x\}$ that are separable in terms of irreducible elements of $K\{x\}$ with nonzero separant. A similar description but restricted to characteristic zero appears in \cite[II.1]{MMP}. In characteristic zero, the description can also be deduced from \cite[IV.9]{Kolbook}. However, for positive characteristic we are not aware of a detailed reference from which our results could be deduced. 

Throughout this section $(K,\d)$ denotes a differential field (of arbitrary characteristic). Recall that for an ideal $I$ of $K\{x\}$ and $s\in K\{x\}$, the saturated ideal of $I$ over $s$ is defined as
$$I:s^\infty\; =\; \{h\in K\{x\}: s^m h\in I \text{ for some }m\geq 0\}.$$
If $I$ is a differential ideal, one readily checks that $I:s^\infty$ is also a differential ideal. The goal of this section is to prove the following:

\begin{theorem}\label{sepideal} \
\begin{enumerate}
\item If $P$ is a nonzero separable prime differential ideal of $K\{x\}$, then $P=[f]:s_f^\infty$ for any $f$ of minimal rank in $P$ (any such $f$ must be irreducible).
\item If $f\in K\{x\}$ is irreducible with $s_f\neq 0$, then $[f]:s_f^\infty$ is a separable prime differential ideal and $f$ is of minimal rank in it.
\end{enumerate}
\end{theorem}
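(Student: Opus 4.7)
My plan is to establish (2) first via an explicit construction, and then deduce (1) by extracting an appropriate polynomial and applying (2); the main new difficulty (relative to characteristic zero) lies in a single step of (1). For (2), let $f$ be irreducible with leader $x_n$, degree $d$ in $x_n$, and $s_f \neq 0$. Since $f$ is irreducible and primitive in $x_n$, the ring $K[x_0, \ldots, x_n]/(f)$ is an integral domain; let $F_0$ be its fraction field and $\alpha_i$ the image of $x_i$. Then $\alpha_0, \ldots, \alpha_{n-1}$ are algebraically independent over $K$, and by Gauss' lemma $f(\alpha_0, \ldots, \alpha_{n-1}, y) \in K(\alpha_0, \ldots, \alpha_{n-1})[y]$ is irreducible with nonzero $y$-derivative $s_f(\alpha)$, so $\alpha_n$ is separably algebraic over $K(\alpha_0, \ldots, \alpha_{n-1})$ and $F_0/K$ is separable. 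I would extend $\d$ to $F_0$ by declaring $\d \alpha_i = \alpha_{i+1}$ for $i < n$ and solving $\d f(\alpha) = 0$ for $\d \alpha_n$ (possible since $s_f(\alpha) \neq 0$); standard derivation arguments show this is well-defined on the unique algebraic relation $f(\alpha)=0$, and all higher $\alpha_j := \d^{j-n}\alpha_n$ then remain inside $F_0$. The differential $K$-algebra map $\phi: K\{x\} \to F_0$, $\d^i x \mapsto \alpha_i$, has separable prime differential kernel $P$ containing $f$ but not $s_f$, so $[f]:s_f^\infty \subseteq P$. For the reverse, pseudo-reduce: since the coefficient of $x_{n+j}$ in $\d^j f$ is $s_f$, for any $h \in K\{x\}$ one multiplies by powers of $s_f$ and subtracts $K\{x\}$-multiples of $\d^j f$ to eliminate $x_{n+1}, x_{n+2}, \ldots$, yielding $s_f^M h \equiv r \pmod{[f]}$ with $r \in K[x_0, \ldots, x_n]$; if $h \in P$, then $r(\alpha)=0$ in $F_0$ forces $r \in (f)$, so $s_f^M h \in [f]$. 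Minimality of the rank of $f$ in $[f]:s_f^\infty$ follows from algebraic independence of $\alpha_0, \ldots, \alpha_{n-1}$ together with $f(\alpha_0, \ldots, \alpha_{n-1}, y)$ being the minimal polynomial of $\alpha_n$.

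For (1), let $P$ be a nonzero separable prime differential ideal and pick $f \in P$ of minimal rank; replacing $f$ by an irreducible factor that must still lie in $P$ with the same rank, I assume $f$ is irreducible. The crux is proving $s_f \neq 0$: in characteristic zero this is automatic (else $x_n$ would not appear in $f$). In characteristic $p > 0$, suppose for contradiction $s_f = 0$, so $f = \sum_k c_k(x_0, \ldots, x_{n-1})\, x_n^{pk}$ and, since $\d(x_n^{pk}) = 0$, $\d f = \sum_k (\d c_k)\, x_n^{pk}$. Pseudo-dividing $\d f$ by $f$ in $K[x_0, \ldots, x_{n-1}][x_n]$ and using minimality of $f$ in $P$ to annihilate the remainder gives $i_f^M \d f = q f$; coprimality of $i_f$ and $f$ in the UFD $K[x_0, \ldots, x_n]$ (as $f$ is primitive in $x_n$) upgrades this to $\d f = q' f$ for some $q' \in K[x_0, \ldots, x_n]$, necessarily of $x_n$-degree at most $1$. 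Matching the $x_n^{pk+1}$ and $x_n^{pk}$ coefficients and comparing $x_{n-1}$-degrees forces the $x_n$-linear part of $q'$ to vanish and each $c_k \in K[x_0, \ldots, x_{n-2}]$; iterating this matching/degree argument through $x_{n-2}, \ldots, x_0$ pushes each $c_k$ into $K$. Hence $f \in K[x_n]$ with $s_f = 0$, so $\alpha_n$ is algebraic over $K$ with a minimal polynomial of zero $x_n$-derivative, i.e., $\alpha_n$ is purely inseparable over $K$. But $L := Frac(K\{x\}/P)$ is separable over $K$, forbidding any proper purely inseparable algebraic element of $L$ over $K$; contradiction.

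Once $s_f \neq 0$ is secured, part (2) yields $Q := [f]:s_f^\infty$ as a separable prime differential ideal in which $f$ has minimal rank. Immediately $Q \subseteq P$ (since $f \in P$ and $s_f \notin P$ by minimality of $f$ in $P$ together with $s_f \neq 0$). For the reverse, pseudo-reduce any $h \in P$ as in (2) to obtain $s_f^M h \equiv r \pmod{[f]}$ with $r \in K[x_0, \ldots, x_n]$; Euclidean-style pseudo-division $i_f^N r = q f + r'$ in $K[x_0, \ldots, x_{n-1}][x_n]$ with $\deg_{x_n} r' < \deg_{x_n} f$ gives $r' \in P$ of strictly smaller rank, so $r' = 0$; then $i_f^N r \in (f) \subseteq Q$ and $i_f \notin Q$ (by minimality of $f$ in $Q$), whence $r \in Q$ by primality of $Q$, and finally $h \in Q$. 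The main obstacle in this plan is the positive-characteristic step proving $s_f \neq 0$ in (1): the characteristic-zero argument no longer applies, and the contradiction must be extracted by combining the algebraic rigidity enforced by minimality of $f$, the fact that $P$ is a differential ideal (so $\d f \in P$), and the impossibility of proper purely inseparable algebraic elements in a separable field extension.
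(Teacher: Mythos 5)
Your part (2) is correct, and it takes a mildly different route from the paper's: you realise $[f]:s_f^\infty$ as the kernel of a differential homomorphism into an explicitly constructed separable differential field extension $F_0$ of $K$, whereas the paper first proves primality of $[f]:s_f^\infty$ by pure reduction arguments (Lemma~\ref{otherway}) and only afterwards builds the fraction field to check separability. Likewise, the containment arguments in part (1) once $s_f\neq 0$ is secured coincide with the paper's Lemma~\ref{equalP} and are fine.

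The genuine gap sits exactly at the step you single out as the crux: ruling out $s_f=0$ in part (1). From $\frac{\partial c_k}{\partial x_{n-1}}=b\,c_k$ and the $x_{n-1}$-degree comparison you correctly obtain $b=0$ and $\frac{\partial c_k}{\partial x_{n-1}}=0$; but in characteristic $p$ the vanishing of $\partial c_k/\partial x_{n-1}$ only places $c_k$ in $K[x_0,\dots,x_{n-2}][x_{n-1}^p]$, not in $K[x_0,\dots,x_{n-2}]$ --- this is the very same phenomenon that allows $s_f=0$ for the leader in the first place, so it cannot be argued away one variable down. Consequently the iteration does not push the $c_k$ into $K$: carried out correctly it yields at best that $\partial f/\partial x_j=0$ identically for every $j$ (i.e.\ $f\in K[x_0^p,\dots,x_n^p]$), and your endgame ``$f\in K[x_n]$, hence $\alpha_n$ is purely inseparable over $K$'' is not available. (Even for a genuine $f\in K[x_n]$ with $s_f=0$ the root would be inseparably, not purely inseparably, algebraic, though that would still contradict separability.) To close the gap you must extract non-separability of $P$ from the vanishing of the partial derivatives of $f$ (identically, or merely modulo $P$ as the paper arranges): by minimality every $h\in P$ of order at most $n$ is a multiple of $f$, so all its partials vanish at $a=x+P$, whence $(a,\d a,\dots,\d^n a)$ is algebraically but not separably dependent over $K$, contradicting separability of $P$ via the linear-disjointness characterisation. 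This is precisely the content of the paper's Lemma~\ref{part1}(ii), for which your proposal currently has no substitute.
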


We prove this after a series of lemmas. We will make use of the following well-known (differential) division algorithm lemma.

\begin{lemma}\cite[\S I.9]{Kolbook}\label{divlemma}
Let $f\in K\{x\}$ be nonzero.
\begin{enumerate}
\item For every $g\in K\{x\}$, there exists $n\geq 0$ and $g_0\in K\{x\}$ with $\ord g_0\leq \ord f$ such that
$$s_f^n\cdot g\equiv g_0\quad \mod [f].$$
\item For every $h\in K\{x\}$ with $\ord h\leq\ord f$, there exists $m\geq 0$ and $h_0$ with $\rk h_0<\rk f$ such that
$$i_f^m\cdot h \equiv h_0 \quad \mod (f).$$
\end{enumerate}
\end{lemma}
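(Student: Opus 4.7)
The plan is to prove each part by a degree-reduction argument, iterating down to the desired rank.

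For part (1), I induct on $\ord g$. If $\ord g \leq \ord f$ take $n=0$ and $g_0=g$; the degenerate case $s_f=0$ is handled trivially by taking $n=1$ and $g_0=0$. Otherwise set $r=\ord f$ and $m=\ord g>r$, and first establish the structural identity
\[\d^k f \;=\; A_k + s_f\cdot x_{r+k}, \qquad \ord A_k \leq r+k-1,\]
proved by induction on $k$ starting from $\d f = \sum_i \d(g_i) x_r^i + s_f\cdot x_{r+1}$ where $f=\sum_i g_i x_r^i$. This yields $s_f x_m \equiv -A_{m-r} \mod [f]$. Writing $g = \sum_{j=0}^D h_j x_m^j$ with $\ord h_j<m$,
\[s_f^D\cdot g \;=\; \sum_j s_f^{D-j} h_j (s_f x_m)^j \;\equiv\; \sum_j s_f^{D-j} h_j (-A_{m-r})^j \mod [f],\]
and the right-hand side has order strictly less than $m$. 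Apply the inductive hypothesis to it and collect the exponents of $s_f$.

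For part (2), I induct on $\deg_{x_r} h$ where $r=\ord f$. If $\rk h < \rk f$, take $m=0$ and $h_0=h$. Otherwise $\ord h = r$ and $D:=\deg_{x_r} h \geq d = \deg f$. Writing $f=\sum_{i=0}^d g_i x_r^i$ with $g_d=i_f$ gives $i_f\cdot x_r^d \equiv -\sum_{i<d} g_i x_r^i \mod (f)$, and if $h = \sum_{j=0}^{D} c_j x_r^j$ with $\ord c_j<r$, then
\[i_f\cdot h \;=\; \sum_{j<d} i_f c_j x_r^j \;+\; \sum_{j\geq d} c_j x_r^{j-d}\,(i_f x_r^d) \;\equiv\; h' \mod (f),\]
where $\ord h' \leq r$ and $\deg_{x_r} h' \leq D-1$. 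Iterate and conclude.

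The only point requiring some care is the structural identity for $\d^k f$ in part (1); in positive characteristic one must verify that the coefficient of $x_{r+k}$ genuinely remains $s_f$ for all $k$. This follows from a straightforward induction, since each application of $\d$ contributes exactly one new term $s_f\cdot x_{r+k+1}$ via the product rule applied to the linear-in-$x_{r+k}$ top piece, with no extraneous binomial factors to worry about. Beyond this, the arguments are routine polynomial division in the variables $x_r$ and $x_m$ respectively.
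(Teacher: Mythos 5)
The paper offers no proof of this lemma, citing it to Kolchin \cite[\S I.9]{Kolbook}; your argument is correct and is essentially the standard Ritt--Kolchin reduction found there: order reduction via the identity $\d^k f = s_f\, x_{r+k} + A_k$ with $\ord A_k < r+k$ (valid in any characteristic, as you rightly verify), followed by degree reduction in the leader via the initial. Both inductions are sound, including the separate handling of the degenerate case $s_f=0$ in part (1), so nothing further is needed.
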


We will also need the following technical lemma. Recall that we identify the differential polynomial ring $K\{x\}$ with $K[x_0,x_1,\dots]$ where $x_i$ stands for $\d^i x$.

\begin{lemma}\label{technical}
Let $f\in K\{x\}$ with $\ord f=n$. For each $i=0,1,\dots, n$, we have
$$\frac{\partial (\d f)}{\partial x_i}=\d\left(\frac{\partial f}{\partial x_i}\right)+\frac{\partial f}{\partial x_{i-1}}$$
with the convention that $\frac{\partial f}{\partial x_{-1}}=0$
\end{lemma}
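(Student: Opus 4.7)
The plan is to expand $\d f$ in terms of the partial derivatives of $f$ with respect to $x_0,\dots,x_n$, apply $\partial/\partial x_i$, and then recognise the resulting expression. Write $f$ as a polynomial in $K[x_0,\dots,x_n]$ and let $f^\d$ denote the polynomial obtained from $f$ by applying $\d$ to each of its coefficients. Then the chain rule gives the identity
\[
\d f \;=\; f^\d \;+\; \sum_{j=0}^{n} \frac{\partial f}{\partial x_j}\, x_{j+1}.
\]

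Next, I would apply $\partial/\partial x_i$ to both sides. The key point is that $\partial/\partial x_i$ commutes with the operation $g\mapsto g^\d$, since the former acts only on the monomials in the $x_j$'s while the latter acts only on the coefficients. Using the product rule on each summand,
\[
\frac{\partial}{\partial x_i}\!\left(\frac{\partial f}{\partial x_j}\, x_{j+1}\right) \;=\; \frac{\partial^{2} f}{\partial x_i\, \partial x_j}\, x_{j+1} \;+\; \frac{\partial f}{\partial x_j}\cdot \delta_{i,\,j+1},
\]
where $\delta_{i,j+1}$ is the Kronecker delta. Summing over $j$ from $0$ to $n$, the Kronecker contribution only survives when $j=i-1$, producing exactly $\partial f/\partial x_{i-1}$ (and nothing when $i=0$, consistent with the stated convention). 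Hence
\[
\frac{\partial(\d f)}{\partial x_i} \;=\; \frac{\partial f^\d}{\partial x_i} \;+\; \sum_{j=0}^{n} \frac{\partial^{2} f}{\partial x_i\, \partial x_j}\, x_{j+1} \;+\; \frac{\partial f}{\partial x_{i-1}}.
\]

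Finally, I would compute $\d(\partial f/\partial x_i)$ via the same chain rule expansion: since $\partial f/\partial x_i$ has order at most $n$, we obtain
\[
\d\!\left(\frac{\partial f}{\partial x_i}\right) \;=\; \left(\frac{\partial f}{\partial x_i}\right)^{\!\d} \;+\; \sum_{j=0}^{n} \frac{\partial}{\partial x_j}\!\left(\frac{\partial f}{\partial x_i}\right) x_{j+1}.
\]
Using the commutation of $(\cdot)^\d$ with $\partial/\partial x_i$ and the symmetry of mixed partial derivatives, this coincides with the first two terms of the previous display, yielding the claimed equality. The verification is essentially bookkeeping with the chain rule; the only subtlety is keeping track of the separation of $\d$ into its action on coefficients and its action on variables, and that no real obstacle arises.
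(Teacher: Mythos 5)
Your proof is correct and follows essentially the same route as the paper: expand $\d f = f^\d + \sum_j \frac{\partial f}{\partial x_j} x_{j+1}$, differentiate with respect to $x_i$, and reassemble the first two terms as $\d\bigl(\frac{\partial f}{\partial x_i}\bigr)$. Your explicit remarks on the commutation of $(\cdot)^\d$ with $\partial/\partial x_i$ and on the Kronecker delta bookkeeping are slightly more detailed than the paper's computation but add nothing substantively different.
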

\begin{proof}
We have
$$\d f=f^\d +\sum_{j=0}^n \frac{\partial f}{\partial x_j}\; x_{j+1}$$
where $f^\d$ stands for the differential polynomial obtained by applying $\d$ to the coefficients of $f$. Differentiating both sides with respect to $x_i$ we get
\begin{align*}
\frac{\partial (\d f)}{\partial x_i} & =\frac{\partial f^\d}{\partial x_i}+\sum_{j=0}^n\left(\frac{\partial^2 f}{\partial x_i\partial x_j}\; x_{j+1}+\frac{\partial f}{\partial x_j}\; \frac{\partial x_{j+1}}{\partial x_i}\right) \\
& =\left(\frac{\partial f}{\partial x_i}\right)^\d+\sum_{j=0}^n \frac{\partial^2 f}{\partial x_j\partial x_i}\; x_{j+1}+ \frac{\partial f}{\partial x_{i-1}} \\
&=\d\left(\frac{\partial f}{\partial x_i}\right)+\frac{\partial f}{\partial x_{i-1}}.
\end{align*}
\end{proof}

\begin{lemma}\label{part1}
Let $P$ be a nonzero prime differential ideal of $K\{x\}$ and let $f$ be of minimal rank in $P$.
\begin{enumerate}
\item [(i)] If $s_f=0$, then $\frac{\partial f}{\partial x_i}\in P$ for all $i=0,\dots,\ord f$.
\item[(ii)] If $\frac{\partial f}{\partial x_i}\in P$ for all $i=0,\dots,\ord f$, then $P$ is not separable.
\end{enumerate}
\end{lemma}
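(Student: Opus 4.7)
The two parts are essentially independent but share a common preliminary observation: by pseudo-division (Lemma~\ref{divlemma}(2)) together with the minimality of $\rk f$ in $P$ and the primality of the principal ideal $(f)$ in the UFD $K[x_0,x_1,\ldots]$, every element $h \in P$ with $\ord h \leq n$ already lies in $(f)$. Indeed, pseudo-division yields $m \geq 0$ and $h_0$ with $\rk h_0 < \rk f$ such that $i_f^m h \equiv h_0 \pmod{(f)}$; then $h_0 \in P$, so minimality forces $h_0 = 0$, hence $i_f^m h \in (f)$, and as $i_f \notin (f)$ (having order strictly less than $n$) primality gives $h \in (f)$.

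For part~(i), the hypothesis $s_f = 0$ makes the term $s_f \cdot x_{n+1}$ drop out of $\d f = f^\d + \sum_{j=0}^{n-1} g_j x_{j+1}$, so $\ord(\d f) \leq n$ and the preliminary observation yields $\d f = qf$ for some $q \in K[x_0,x_1,\ldots]$. I would then run a downward induction on $j$ from $n$ to $0$ to show each $g_j := \partial f/\partial x_j$ lies in $(f) \subseteq P$. The base case $j = n$ is immediate since $g_n = s_f = 0$. For the step, Lemma~\ref{technical} gives $g_j = \partial(\d f)/\partial x_{j+1} - \d g_{j+1}$; writing $g_{j+1} = h_{j+1} f$ by the inductive hypothesis, the product rule yields both $\partial(qf)/\partial x_{j+1} = (\partial q/\partial x_{j+1} + q h_{j+1}) f \in (f)$ and $\d(h_{j+1} f) = (\d h_{j+1} + q h_{j+1}) f \in (f)$ (using $\d f = qf$), so $g_j \in (f)$.

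For part~(ii), set $L = \mathrm{Frac}(K\{x\}/P)$, let $\alpha = x + P$, and $F = K(\alpha_0,\ldots,\alpha_n) \subseteq L$ with $\alpha_j = \d^j\alpha$. Since $P$ separable is equivalent to $L/K$ separable, and since separability descends to subfields, it suffices to show $F/K$ is not separable. Minimality of $\rk f$ implies that no nonzero polynomial in $K[x_0,\ldots,x_{n-1}]$ lies in $P$, so $\alpha_0,\ldots,\alpha_{n-1}$ are algebraically independent over $K$; combined with $\alpha_n$ being algebraic over $K(\alpha_0,\ldots,\alpha_{n-1})$ (since $i_f(\alpha) \neq 0$), this gives $\mathrm{tr.deg}(F/K) = n$. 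Applying the preliminary observation to elements of $P \cap K[x_0,\ldots,x_n]$ yields $P \cap K[x_0,\ldots,x_n] = (f)$, so $F = \mathrm{Frac}(K[x_0,\ldots,x_n]/(f))$. Hence $\Omega^1_{F/K}$ is generated by $d\alpha_0,\ldots,d\alpha_n$ subject only to the relation $\sum_{i=0}^{n} g_i(\alpha)\, d\alpha_i = 0$; this is trivial by hypothesis, so $\dim_F \Omega^1_{F/K} = n+1 > n = \mathrm{tr.deg}(F/K)$, and the standard MacLane/Jacobian criterion implies $F/K$ is not separable.

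The main obstacle is the preliminary observation: a direct rank comparison with $\rk f$ is unavailable because an element of $P$ of order $\leq n$ can a priori have $x_n$-degree exceeding $\deg f$ (and hence larger rank than $f$), so pseudo-division is essential to reduce the rank before minimality can be applied, and primality of $(f)$ in $K[x_0,x_1,\ldots]$ is then needed to strip the extraneous factor $i_f^m$.
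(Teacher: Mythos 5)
Your proposal is correct. Part (i) follows essentially the paper's route: the same downward induction on the index of the partial derivative, the same use of pseudo-division plus minimality of $\rk f$ and primality of $(f)$ to get $\d f\in(f)$, and the same application of Lemma~\ref{technical}. The only (harmless, in fact slightly cleaner) deviation is that you carry the stronger inductive hypothesis $\partial f/\partial x_{j+1}\in(f)$ rather than merely $\in P$, which lets you compute $\d(h_{j+1}f)=(\d h_{j+1}+h_{j+1}q)f$ explicitly instead of invoking, as the paper does, that $P$ is a differential ideal to place $\d(\partial f/\partial x_{j+1})$ in $P$. Your "preliminary observation" is exactly the reduction the paper performs inline for $\d f$ (and again for $h$ in part (ii)).

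Part (ii) is where you genuinely diverge. The paper stays with Kolchin's definition of separability via separable dependence: it shows that every $h\in P$ of order at most $n$ factors as $gf$, hence has all partials vanishing at $a$, so the tuple $(a,\d a,\dots,\d^n a)$ is algebraically dependent but not separably dependent over $K$, and this directly witnesses inseparability of $\operatorname{Frac}(K\{x\}/P)$ over $K$. You instead identify $P\cap K[x_0,\dots,x_n]=(f)$, compute $\operatorname{tr.deg}(F/K)=n$, and compare with $\dim_F\Omega^1_{F/K}=n+1$ (the single Jacobian relation being trivial by hypothesis), invoking the MacLane/Jacobian criterion. Both arguments rest on the same two facts --- the order-$\le n$ part of $P$ is exactly $(f)$, and all partials of $f$ vanish at $a$ --- so the difference is one of packaging: the paper's version is self-contained given Kolchin's characterisation of separable extensions (which it records in the appendix as Remark~\ref{charsep1}), while yours imports the standard commutative-algebra criterion via K\"ahler differentials, at the price of the extra bookkeeping ($\operatorname{tr.deg}=n$, descent of separability to the subfield $F$) but with the benefit of making the "too many differentials" obstruction quantitatively visible. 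Your closing remark about why pseudo-division is unavoidable (an element of $P$ of order $\le n$ may have larger $x_n$-degree than $f$) correctly identifies the point the paper also navigates.
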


\begin{proof}
Let $n=\ord f$.

\smallskip
(i) We proceed by (backwards) induction on $i=n,n-1,\dots,1,0$. For $i=n$, the assertion is true since
$$\frac{\partial f}{\partial x_n}=s_f=0\in P.$$
Now let $n>i>0$ and assume that $\frac{\partial f}{\partial x_i}\in P$. 
From the formula
$$\d f=f^\d +\sum_{j=0}^{n-1} \frac{\partial f}{\partial x_j}\; x_{j+1} + s_f \, x_{n+1},$$
we see that $\ord \d f\leq \ord f$ (since $s_f=0$). By Lemma~\ref{divlemma}(2), there is $m\geq 0$ and $r\in K\{x\}$ with $\rk r<\rk f$ such that
$$i_f^m \d f \equiv  r \quad \mod (f).$$
Since $f$ and $\d f$ are in $P$, we get $r\in P$ and so $r=0$ (since $f$ of minimal rank in $P$). Thus, $i_f^m \d f\in (f)$, but as $f$ is irreducible and $\ord i_f<\ord f$ we actually have $\d f\in (f)$. Hence there is $g\in K\{x\}$ such that $\d f=gf$. Differentiating both sides of the latter equality with respect to $x_i$ and using the formula given in Lemma~\ref{technical}, we have
$$\d\left(\frac{\partial f}{\partial x_i}\right)+\frac{\partial f}{\partial x_{i-1}}=\frac{\partial g}{\partial x_i} \; f+g\; \frac{\partial f}{\partial x_i}.$$
Since $f$, $\frac{\partial f}{\partial x_i}$, and $\d\left(\frac{\partial f}{\partial x_i}\right)$ are all in $P$ (the latter because $P$ is a differential ideal), we get that $\frac{\partial f}{\partial x_{i-1}}\in P$, as desired.

\medskip

(ii) Consider the differential field $L=Frac(K\{x\}/P)$ and let $a=x+P$. We prove that the tuple $(a,\d a,\dots,\d^n a)$ witnesses that $L/K$ is not a separable extension, which means that $P$ is not separable. Let $h\in K\{x\}$ with $\ord h\leq n$ such that $h(a)=0$. It suffices to show that $\frac{\partial h}{\partial x_i}(a)=0$ for all $i=0,\dots,n$. Since $\ord h\leq \ord f$ and $h\in P$, by a similar argument to part (i), we get that there is $g\in K\{x\}$ such that $h=gf$.  Differentiating both sides with respect to $x_i$ we get
$$\frac{\partial h}{\partial x_i}=\frac{\partial g}{\partial x_i}\; f + g\; \frac{\partial f}{\partial x_i}.$$
Evaluating at $a$, the right-hand-side vanishes (as $\frac{\partial f}{\partial x_i}\in P$). Thus, $\frac{\partial h}{\partial x_i}(a)=0$.
\end{proof}

\begin{lemma}\label{equalP}
Let $P$ be a prime differential ideal of $K\{x\}$. If there is $f\in P$ of minimal rank with the property that $s_f\neq 0$, then $P=[f]:s_f^\infty$.
\end{lemma}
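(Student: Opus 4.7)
The plan is to prove the two inclusions $[f]:s_f^\infty \subseteq P$ and $P \subseteq [f]:s_f^\infty$ separately. The first is essentially immediate: if $s_f^m h \in [f] \subseteq P$, then since $\rk s_f < \rk f$ and $s_f \neq 0$, the minimal-rank hypothesis forces $s_f \notin P$, and primality of $P$ yields $h \in P$.

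For the reverse inclusion, the plan is to start with $h \in P$ and pass through two applications of the differential division algorithm (Lemma~\ref{divlemma}). First I would use part (1) to produce $n \geq 0$ and $g_0 \in K\{x\}$ with $\ord g_0 \leq \ord f$ satisfying $s_f^n h \equiv g_0 \pmod{[f]}$; since $h$ and $[f]$ sit inside $P$, this forces $g_0 \in P$. Since $\ord g_0 \leq \ord f$, part (2) then applies to give $m \geq 0$ and $h_0$ with $\rk h_0 < \rk f$ such that $i_f^m g_0 \equiv h_0 \pmod{(f)}$. Again $h_0 \in P$, and now minimality of $\rk f$ in $P$ forces $h_0 = 0$, so $i_f^m g_0 \in (f)$.

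The concluding step is to upgrade $i_f^m g_0 \in (f)$ to $g_0 \in (f)$ using irreducibility of $f$. Viewing $K\{x\} = K[x_0, x_1, \dots]$ as a UFD, irreducibility of $f$ makes $(f)$ prime; since $i_f$ is nonzero (by the definition of the initial) with $\rk i_f < \rk f$, it cannot lie in $(f)$, so primality of $(f)$ gives $g_0 \in (f) \subseteq [f]$. Combined with $s_f^n h \equiv g_0 \pmod{[f]}$, this yields $s_f^n h \in [f]$, i.e.\ $h \in [f]:s_f^\infty$, completing the argument. I do not foresee a serious obstacle here: the division algorithm, the strict rank inequalities $\rk s_f, \rk i_f < \rk f$, and the UFD structure of $K\{x\}$ are all already on the table. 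The one place where some care is needed is in the very last step, where irreducibility of $f$ (not just minimality of its rank) is what allows the cancellation of $i_f^m$, so this is where the full hypothesis on $f$ is used.
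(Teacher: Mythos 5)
Your proof is correct and follows essentially the same route as the paper's: $s_f\notin P$ by minimality of rank gives $[f]:s_f^\infty\subseteq P$, and the reverse inclusion via Lemma~\ref{divlemma}(1) followed by Lemma~\ref{divlemma}(2) and cancellation of $i_f^m$ in the prime ideal $(f)$. The only (harmless) difference is that you apply part (2) of the division lemma uniformly, whereas the paper first splits off the case $\ord g_0<\ord f$, where $g_0=0$ immediately.
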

\begin{proof}
Since $\rk s_f<\rk f$ and $f$ is of minimal rank, $s_f\notin P$. Then, since $P$ is prime, we have $[f]:s_f^\infty\subseteq P$. To prove the other containment, suppose $g\in P$. By Lemma~\ref{divlemma}(1), there are $n\geq 0$ and $g_0\in K\{x\}$ with $\ord g_0<\ord f$ such that
\begin{equation}\label{sepeq}
s_f^n g\equiv g_0 \quad \mod [f].
\end{equation}
Since $g\in P$, we get $g_0\in P$. In case $\ord g_0<\ord f$, then $g_0=0$ (by minimal rank of $f$), and so $g\in [f]:s_f^\infty$. Otherwise, $\ord g_0=\ord f$ and then by Lemma~\ref{divlemma}(2) there are $m\geq 0$ and $g_1$ with $\rk g_1<\rk f$ such that
$$i_f^m g_0\equiv g_1 \quad (f).$$
Since $g_0\in P$, we get $g_1\in P$. By minimality of $f$, we get $g_1=0$. Thus, $i_f g_0\in (f)$. As $\ord i_f<\ord f$, we get $i_f\notin P$, and so, as $(f)$ is prime, we obtain $g_0\in (f)\subseteq [f]$. It follows from congruence \eqref{sepeq} above that $g\in [f]:s_f^\infty$.  
\end{proof}

We can now easily prove the part (1) of Theorem~\ref{sepideal}.

\begin{proof}[Proof of Theorem~\ref{sepideal}(1)]
Let $f$ be of minimal rank in $P$. Since $P$ is separable, Lemma~\ref{part1} tells us that $s_f\neq 0$. The assertion now follows from Lemma~\ref{equalP}.  
\end{proof}

We now move on to prove part (2) of the theorem. We first need a lemma. 

\begin{lemma}\label{otherway}
Let $f\in K\{x\}$ be irreducible with $s_f\neq 0$.
\begin{enumerate}
\item If $g\in [f]$ with $\ord g\leq \ord f$, then $g\in (f)$.
\item $[f]:s_f^\infty$ is a prime differential ideal.
\end{enumerate}
\end{lemma}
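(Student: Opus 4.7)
The plan is to prove both parts simultaneously by constructing an explicit ``generic zero'' of $f$ in a differential field extension of $K$ and analysing its defining differential ideal.

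Set $n=\ord f$. I would first take $b_0,\dots,b_{n-1}$ algebraically independent over $K$ in some field extension, and let $b_n$ be a root of $f(b_0,\dots,b_{n-1},t)\in K(b_0,\dots,b_{n-1})[t]$. This polynomial is irreducible (by Gauss's lemma, since $f$ is irreducible over $K$ and primitive in $K[x_0,\dots,x_{n-1}][x_n]$) and separable in $t$ (since $s_f\neq 0$ has strictly smaller rank than $f$, so $s_f(b_0,\dots,b_{n-1},t)$ is a nonzero polynomial of degree less than the minimal polynomial of $b_n$, hence coprime to it). Extend $\d$ from $K$ to $K(b_0,\dots,b_{n-1})$ by declaring $\d b_i=b_{i+1}$ for $i<n$ (unique given the algebraic independence), and then to $F=K(b_0,\dots,b_n)$ using the separability of the algebraic extension, so that $\d b_n$ is the unique element making $\d(f(b_0,\dots,b_n))=0$. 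Put $a=b_0$ and $a_i=\d^i a\in F$, so $a_i=b_i$ for $i\leq n$, $f(a)=0$, and $s_f(a)\neq 0$. Differentiating gives $\d^k f(a)=0$ for every $k\geq 0$, hence $[f]\subseteq I_\d(a/K)$.

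For Part (1), suppose $g\in[f]$ with $\ord g\leq n$. Then $g(a)=g(b_0,\dots,b_n)=0$, and since $b_0,\dots,b_{n-1}$ are algebraically independent over $K$ with $b_n$ having minimal polynomial (a scalar multiple of) $f(b_0,\dots,b_{n-1},t)$, the vanishing ideal of $(b_0,\dots,b_n)$ in $K[x_0,\dots,x_n]$ equals $(f)$; therefore $g\in(f)$. For Part (2), I would show $[f]:s_f^\infty=I_\d(a/K)$, which suffices since $I_\d(a/K)$ is automatically a prime differential ideal. The inclusion $[f]:s_f^\infty\subseteq I_\d(a/K)$ is immediate from $[f]\subseteq I_\d(a/K)$, primality of $I_\d(a/K)$, and $s_f(a)\neq 0$. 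For the reverse, given $g\in I_\d(a/K)$, apply Lemma~\ref{divlemma}(1) to produce $k\geq 0$ and $g_0$ with $\ord g_0\leq n$ such that $s_f^k g\equiv g_0\pmod{[f]}$. Then $g_0\in I_\d(a/K)$, so by the same generic point argument as in Part (1), $g_0\in(f)\subseteq[f]$, whence $s_f^k g\in[f]$ and $g\in[f]:s_f^\infty$.

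The principal technical point is the derivation-extension step: extending $\d$ to $K(b_0,\dots,b_{n-1})$ with the prescribed values is automatic by algebraic independence, while the further extension to $F$ relies on separability of $F/K(b_0,\dots,b_{n-1})$, and this is exactly the place where the hypothesis $s_f\neq 0$ is used decisively. Once the construction is in place, both parts of the lemma follow by identifying $[f]:s_f^\infty$ with the defining differential ideal $I_\d(a/K)$.
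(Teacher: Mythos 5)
Your proof is correct, but it takes a genuinely different route from the paper's. The paper proves part (1) by a purely syntactic induction on the length of a representation $g=h_0f+h_1\d f+\cdots+h_m\d^mf$, using that $\d^mf=s_f\,x_{n+m}+f_m$ to specialise $x_{n+m}\mapsto -f_m s_f^{-1}$ in the localisation $K\{x\}_{s_f}$ and shorten the sum; it then gets part (2) from a double application of Lemma~\ref{divlemma} together with part (1) and primality of $(f)$, never leaving the ring $K\{x\}$. You instead build an explicit generic differential zero $a$ of $f$ — transcendentals $b_0,\dots,b_{n-1}$, a separable algebraic $b_n$, and a derivation extended through the separable algebraic step, which is exactly where $s_f\neq 0$ is spent — and reduce both parts to the identities $I_\d(a/K)\cap K[x_0,\dots,x_n]=(f)$ and $[f]:s_f^\infty=I_\d(a/K)$, the latter being prime for free as the kernel of a map into a field. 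The trade-off: the paper's argument is self-contained within $K\{x\}$ and needs no field or derivation extensions, while yours is more geometric, dispenses with the induction in part (1) entirely, and yields strictly more information — your construction is essentially the generic point that the paper only introduces later, in the proof of Theorem~\ref{sepideal}(2), to establish separability (your $a,\d a,\dots,\d^{n-1}a$ is visibly a separating transcendence basis of $K\langle a\rangle$ over $K$), so your argument comes close to proving that stronger statement in one pass. The only points worth being careful about, and which you handle correctly, are that $f$ is primitive as a polynomial in $x_n$ over $K[x_0,\dots,x_{n-1}]$ (so Gauss's lemma applies and the vanishing ideal of $(b_0,\dots,b_n)$ in $K[x_0,\dots,x_n]$ is exactly $(f)$), and that $s_f(b_0,\dots,b_{n-1},t)$ is nonzero of degree in $t$ strictly below that of the minimal polynomial of $b_n$, so $s_f(a)\neq 0$.
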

\begin{proof}
(1) Let $g\in [f]$ be of order at most $\ord f$. Then there is $m$ and $h_0,\dots,h_m\in K\{x\}$ such that
\begin{equation}\label{reduce}
g=h_0 f +h_1\d f+\cdots+ h_m\d^m f.
\end{equation}
We prove the result by induction on $m$. If $m=0$, then $g=h_0f$ and so $g\in (f)$. Now assume it holds for $m-1\geq 0$. Let $n=\ord f$. The Leibniz rule for derivations implies that there is $f_m$ with $\ord f_m< n+m$ such that
$$\d^m f=s_f \; x_{n+m}+f_{m}.$$
Since $s_f\neq 0$, we can work in the localisation $K\{x\}_{s_f}$ and we can specialise $x_{n+m}$ to $-f_m\cdot s_f^{-1}$ in equality \eqref{reduce}. This yields 
$$g=h'_0f +h'_1 \d f+\cdot +h'_{m-1}\d^{m-1}f.$$
for some $h'_0,\dots,h'_{m-1}\in K\{x\}_{s_f}$. In fact, there is $m\geq 0$ such that $s_f^m h'_i\in K\{x\}$ for all $i=0,\dots,m-1$. By induction, it follows that $s_f^m g\in (f)$. Since $\rk s_f < \rk f$ and $(f)$ is prime, we obtain $g\in (f)$.

\smallskip
(2) We prove $[f]:s_f^\infty$ is prime. Let $g,h\in K\{x\}$ such that $g\cdot h\in [f]:s_f^\infty$. By Lemma~\ref{divlemma}, there are $n,m$ and $g_0,h_0\in K\{x\}$ with $\ord(g_0)$ and $\ord h_0$ at most $\ord f$ such that
\begin{equation}\label{forprime}
s_f^n g\equiv g_0 \; \mod [f] \quad \text{ and }\quad s_f^m h\equiv h_0 \; \mod [f].
\end{equation}
Then
$$s_f^{n+m}gh \equiv g_0 h_0 \; \mod [f].$$
Since $gh\in [f]:s_f^\infty$, there $r$ such that $s_f^{r}gh\in [f]$. Then, by the above display, we get $s_f^rg_0h_0\in [f]$. Since $\ord (s_f^rg_0h_0)\leq \ord f$, part (1) gives us that $s_f^rg_0h_0\in (f)$. Since $\rk s_f<\rk f$ and $(f)$ is prime, we get that either $g_0$ or $h_0$ is in $(f)$. By the congruences in \eqref{forprime}, we get that either $s_f^n g\in [f]$ or $s_f^m h\in [f]$, as desired.
\end{proof}

We can now prove part (2) of Theorem~\ref{sepideal}.

\begin{proof}[Proof of Theorem~\ref{sepideal}(2)]
Let $P=[f]:s_f^\infty$. By Lemma~\ref{otherway}(2), $P$ is a prime differential ideal. We now prove $f$ is of minimal rank in $P$. Let $g\in P$ with $\ord(g)\leq \ord f$. Then for some $n$ we have $s_f^n g\in [f]$. Since $\ord(s_f^n g)\leq \ord f$, Lemma~\ref{otherway}(1) gives $s_f^n g\in (f)$. Since $\rk s_f<\rk f$ and $(f)$ is prime, we get $g\in (f)$. So, $\deg f\leq \deg g$, which implies $\rk f\leq \rk g$.

All that remains to show is that $P$ is separable. Equivalently, we show that the differential field $L=Frac(K\{x\}/P)$ is separable over $K$. Let $a=x+P$. Let $n=\ord f$. Since $s_f(a)\neq 0$, one readily checks that
$$K\langle a\rangle =K(a,\d a,\dots,\d^n a).$$
We claim that $a,\d a,\dots,\d^{n-1}a$ forms a separating transcendence basis for $K\langle a\rangle$ (and this is enough as then $L/K$ will be separable). Indeed, if $h\in K[x_0,\dots,x_{n-1}]$ with $h(a,\dots,\d^{n-1}a)=0$ then $h\in P$, as $f$ is minimal in $P$ we get $h$ must be zero. Thus, $a,\d a,\dots,\d^{n-1}a$ are algebraically independent over $K$. To finish, we prove that $K\langle a\rangle$ is separably algebraic over $K(a,\dots,\d^{n-1}a)$. By the above displayed equality, it suffices to show that $\d^n a$ is separable over $K(a,\cdots,\d^{n-1}a)$. Consider the polynomial $p(x_n)=f(a,\cdots,\d^{n-1}a, x_n)\in K(a,\dots,\d^{n-1}a)[x_n]$. Then $p(a)=0$ and 
$$\frac{d p}{d x_n}(\d^{n}a)=s_f(a)\neq 0.$$
Hence, $\d^{n}a$ is indeed separable over $K(a,\cdots,\d^{n-1}a)$.
\end{proof}

We point out the following two useful consequences.

\begin{corollary}
Let $P$ be a nonzero prime differential ideal of $K\{x\}$. Then, $P$ is separable if and only if among the elements of $P$ of minimal rank there is one with nonzero separant.
\end{corollary}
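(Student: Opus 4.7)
The plan is to deduce the corollary from Theorem~\ref{sepideal} and Lemma~\ref{part1}, bridging between arbitrary and irreducible elements of minimal rank via factorisation in the polynomial ring.

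For the forward direction, assume $P$ is separable and pick any $f$ of minimal rank in $P$. Factor $f$ into irreducibles in $K[x_0,\dots,x_{\ord f}]$; primality of $P$ puts some irreducible factor $f_1$ into $P$, and since every irreducible factor of $f$ has rank at most $\rk f$ (either the order drops, or the order agrees and the leader-degree does not increase), $f_1$ is also of minimal rank in $P$. Lemma~\ref{part1} then forces $s_{f_1}\neq 0$: otherwise part~(i) gives $\partial f_1/\partial x_i\in P$ for all $i\leq \ord f_1$, and part~(ii) would contradict separability of $P$.

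For the converse, let $f\in P$ be of minimal rank with $s_f\neq 0$, and set $n=\ord f$. Factor $f=c\cdot f_1\cdots f_k$ into irreducibles in $K[x_0,\dots,x_n]$ and pick an irreducible factor $f_i\in P$. Minimality of $\rk f$ forces $\rk f_i=\rk f$, that is, $\ord f_i=n$ and $\deg_{x_n} f_i=\deg_{x_n} f$, whence all remaining factors have $x_n$-degree zero and lie in $K[x_0,\dots,x_{n-1}]$. The product rule then yields $s_f=(f/f_i)\cdot s_{f_i}$, so $s_{f_i}\neq 0$. Lemma~\ref{equalP} now gives $P=[f_i]:s_{f_i}^\infty$, and Theorem~\ref{sepideal}(2) identifies this as a separable prime differential ideal.

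The only real obstacle is the factorisation bookkeeping above: one must verify that an irreducible factor of a minimal-rank element of $P$ is again of minimal rank and that the nonvanishing of the separant transfers to such a factor. Once these mild observations are in hand, Lemmas~\ref{part1} and \ref{equalP} together with Theorem~\ref{sepideal}(2) close both directions.
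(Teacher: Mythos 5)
Your proof is correct and follows essentially the same route as the paper: Lemma~\ref{part1} for the forward direction, and Lemma~\ref{equalP} together with Theorem~\ref{sepideal}(2) for the converse. The only difference is that you spell out the reduction to an irreducible factor of the same rank with nonzero separant --- a step the paper's converse direction glosses over when it applies Lemma~\ref{equalP} (stated for irreducible $f$) directly to a possibly reducible minimal-rank element --- and your bookkeeping there is accurate.
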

\begin{proof}
$(\Rightarrow)$ Suppose $P$ is separable. Let $f$ be irreducible of minimal rank in $P$. By Lemma~\ref{part1}, $s_f$ is nonzero. 
\smallskip

$(\Leftarrow)$ Let $f$ be an element of $P$ of minimal rank and suppose that $s_f\neq 0$. By Lemma~\ref{equalP}, $P$ equals $[f]:s_f^\infty$, and, by Theorem~\ref{sepideal}(2), the latter ideal is separable.
\end{proof}

\begin{corollary}\label{isotype} \
\begin{enumerate}
\item If $f\in K\{x\}$ is irreducible with nonzero $s_f$, then $[f]:s_f^\infty$ is the unique prime differential ideal of $K\{x\}$ containing $f$ but not containing any $g\in K\{x\}$ with $\ord g<\ord f$.
\item If $a$ is constrained over $K$, then there are $f,g\in K\{x\}$ with $\ord g<\ord f$ such that the differential isomorphism type of $a$ over $K$ is determined by $f(x)=0\land g(x)\neq 0$.
\end{enumerate}
\end{corollary}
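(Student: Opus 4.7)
The plan is to derive both parts from Theorem~\ref{sepideal} together with the division-algorithm lemma (Lemma~\ref{divlemma}) and an elementary resultant trick. For Part~(1), existence is immediate: Theorem~\ref{sepideal}(2) says $[f]:s_f^\infty$ is a prime differential ideal in which $f$ has minimal rank, so it contains no nonzero polynomial of order less than $\ord f$. For uniqueness, let $Q$ be another such prime differential ideal. First I would argue $s_f\notin Q$: this is immediate when $\ord s_f<\ord f$, and when $\ord s_f=\ord f$ (so $\deg f\geq 2$ and $\deg s_f<\deg f$), the resultant $\operatorname{res}_{x_{\ord f}}(f,s_f)$ is a nonzero element of $(f,s_f)\cap K[x_0,\dots,x_{\ord f-1}]$ by irreducibility of $f$, which would produce a forbidden element of $Q$ if $s_f\in Q$. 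Primality then gives $[f]:s_f^\infty\subseteq Q$. For the reverse inclusion, take $g\in Q$ and apply Lemma~\ref{divlemma}(1) followed by (2) to obtain $g_1\in Q$ with $\rk g_1<\rk f$ and $s_f^n i_f^m g\equiv g_1\mod [f]$; the case $\ord g_1=\ord f$ is again ruled out by a resultant with $f$, forcing $g_1=0$. Tracing the reduction backwards, using that $s_f,i_f\notin [f]:s_f^\infty$, yields $g\in [f]:s_f^\infty$.

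For Part~(2), set $P=I_\d(a/K)$, which is a separable prime differential ideal by constrainedness. Theorem~\ref{sepideal}(1) gives $P=[f]:s_f^\infty$ for some irreducible $f$ of minimal rank in $P$ with $s_f\neq 0$. Let $h$ be a constraint for $a$. Since the product of a constraint with any polynomial nonzero at $a$ is again a constraint, the polynomial $h\cdot s_f\cdot i_f$ is a constraint with nonzero value at $a$. Now apply Lemma~\ref{divlemma}(1) to get $s_f^n(hs_fi_f)\equiv g_0\mod [f]$ with $\ord g_0\leq \ord f$, then Lemma~\ref{divlemma}(2) to get $i_f^m g_0\equiv g_1\mod (f)$ with $\rk g_1<\rk f$; if $\ord g_1=\ord f$ and $\deg g_1<\deg f$, replace $g_1$ by the nonzero resultant $\operatorname{res}_{x_{\ord f}}(f,g_1)$, whose order is strictly less than $\ord f$. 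Call the resulting polynomial $g$; by construction, $\ord g<\ord f$ and $g(a)\neq 0$.

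To verify that $f(x)=0\wedge g(x)\neq 0$ determines the differential isomorphism type of $a$, evaluate the reduction congruences at any $b$ with $f(b)=0$: because $[f]$ and $(f)$ vanish at $b$, one obtains $g(b)$ as an explicit product of a nonzero cofactor with positive powers of $s_f(b)$, $i_f(b)$, and $h(b)$. Hence $g(b)\neq 0$ forces both $s_f(b)\neq 0$ and $h(b)\neq 0$. The former gives $P=[f]:s_f^\infty\subseteq I_\d(b/K)$, so $b$ is a differential specialization of $a$. Combined with separability of $K\langle b\rangle/K$ (which holds in the intended ambient setting for the corollary's applications, e.g., when $b$ lies in a separable differential extension of $K$), the constraint property applied at $h(b)\neq 0$ forces $b$ to be generic, whence $I_\d(b/K)=P=I_\d(a/K)$ and $K\langle b\rangle\cong_K K\langle a\rangle$.

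The main obstacle throughout is the possibility $\ord s_f=\ord f$, i.e., $\deg f\geq 2$ in its leader; then neither $s_f$ nor its reductions modulo $(f)$ can serve directly as an inequation of order strictly less than $\ord f$. This is handled uniformly in both parts by the resultant-with-$f$ trick, which converts any element of order $\ord f$ and smaller degree in the leader into a nonzero element of strictly smaller order.
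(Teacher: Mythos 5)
Your proof is correct, and part (2) follows a genuinely different route from the paper's. Part (1) is essentially the paper's argument: the paper shows that $f$ has minimal rank in any such $Q$ (by dividing $f$ by a minimal-rank element of $Q$ and invoking irreducibility) and then quotes Lemma~\ref{equalP}, while you establish the two inclusions directly, using $\operatorname{res}_{x_{\ord f}}(f,-)$ to convert reductions of order $\ord f$ but smaller leader-degree into forbidden lower-order elements; the two devices are interchangeable here. For part (2) the paper argues by contradiction and compactness: if no $g$ with $\ord g<\ord f$ were a constraint for $a$, one could realise $f=0\land q=0$ together with all lower-order inequations, and part (1) would make that realisation generic, contradicting $q(a)\neq 0$; this yields the mere existence of a low-order constraint. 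You instead take an arbitrary constraint $h$, multiply by $s_fi_f$, and reduce modulo $[f]$ via Lemma~\ref{divlemma} (plus one further resultant) to manufacture an explicit $g$ of order $<\ord f$ together with a congruence exhibiting $g(b)$, for any $b$ with $f(b)=0$, as a multiple of a product of powers of $s_f(b)$, $i_f(b)$ and $h(b)$. This buys something concrete: $g(b)\neq 0$ then forces $s_f(b)\neq 0$, which is exactly what makes $b$ a differential specialisation of $a$ (via $[f]:s_f^\infty\subseteq I_\d(b/K)$) so that the constraint property of $h$ can be invoked at all. The paper leaves this last step implicit, and for an arbitrary constraint of order $<\ord f$ it is not automatic: a solution of $f=0$ lying where $s_f$ vanishes need not be a specialisation of $a$ yet can still satisfy $g\neq 0$ (e.g.\ $f=(\d x)^2-x$ with the constraint $x+1$ admits the solution $b=0$). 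So your construction is the more careful one on this point. The separability caveat you flag --- the constraint property only applies when $K\langle b\rangle/K$ is separable --- is inherent in the statement itself and holds in all of the corollary's applications; it is not a defect introduced by your argument.
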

\begin{proof}
(1) Suppose $Q$ is a prime differential ideal of $K\{x\}$ containing $f$ but not any $g$ with $\ord g<\ord f$. By Lemma~\ref{equalP}, it suffices to prove that $f$ is of minimal rank in $Q$. Let $h$ be of minimal rank in $Q$ (note that $\ord h=\ord f$). Towards a contradiction, suppose $\deg h<\deg f$. By the division algorithm, there is $n$ such that $i_h^n \cdot f\in (h)$. Since $\ord i_h< \ord h$, the latter implies that $h$ is a factor of $f$, which contradicts the irreducibility of $f$. 

\medskip
(2) The differential defining ideal of $a$ over $K$, denoted $I_\d(a/K)$, is a separable prime differential ideal of $K\{x\}$, and thus, by Theorem~\ref{sepideal}, $I_\d(a/K)=[f]:s_f^{\infty}$ for some irreducible $f$ and $s_f\neq 0$. Let $q\in K\{x\}$ be a constraint of $a$ over $K$. Towards a contradiction, suppose there is no $g\in K\{x\}$ with $\ord g<\ord f$ such that $g$ is a constraint for $a$ over $K$. Consider
$$\Phi(x)=\{f(x)=0\land q(x)=0\}\cup\{g(x)\neq 0: g\in K\{x\} \text{ and }\ord g<\ord f\}.$$
A (first-order) compactness argument yields that $\Phi(x)$ is satisfiable in some differential field extension of $K$, say by an element $b$. Then, by part (1), $I_\d(b/K)=I_\d(a/K)$. But then, as $q(b)=0$, we get $q(a)=0$ and this contradicts the fact that $q$ is a constraint for $a$ over $K$.
\end{proof}

\section{Separably differentially closed fields}\label{sepdiff}

In this section, we introduce the class of separably differentially closed fields. We derive some basic properties and prove that this class is elementary (in the first-order sense in the language of differential fields). Furthermore, we provide several characterisations; in particular, we prove that being separably differentially closed is equivalent to being constrainedly closed in the sense of Kolchin (with the right adaptation in positive characteristic). We also give an algebro-geometric characterisation in the spirit of the geometric axioms for DCF$_0$. Unless otherwise stated, fields in this section are of arbitrary characteristic.

\medskip

 Let $\Ld$ be the language of differential fields; namely, $\Ld=\{0,1,+,-,\cdot,^{-1},\d\}$. Recall that, given a differential field extension $(L,\d)/(K,\d)$, we say that $(K,\d)$ is existentially closed in $(L,\d)$ if for every quantifier-free $\Ld(K)$-formula $\phi(\bar v)$ we have
 $$L\models \exists \bar v \; \phi(\bar v) \quad \implies \quad K\models \exists \bar v\; \phi(\bar v).$$
 
 \medskip

\begin{definition}
A differential field $(K,\d)$ is said to be separably differentially closed if for every differential field extension $(L,\d)/(K,\d)$ the following holds: if the field extension $L/K$ is separable, then $(K,\d)$ is existentially closed in $(L,\d)$.
\end{definition}

\begin{remark} \
\begin{enumerate}
\item [(i) ]The above definition takes its cue from the field-theoretic notion of being separably closed. Indeed, recall that a field $K$ is separably closed if and only if for every separable field extension $L/K$ we have that $K$ is existentially closed in $L$ (in the language of fields).
\item [(ii)] Under the assumption that $(K,\d)$ is differentially perfect (and so every differential field extension is separable), we see from the definition that $(K,\d)$ being separably differentially closed is equivalent to being differentially closed (i.e., existentially closed in every differential extension \cite{Wood73}).
\end{enumerate}
\end{remark}

One of the goals of this section is to prove that the class of separably differentially closed fields is elementary (in the language $\Ld$). The first-order axioms below are in the manner of Blum's axioms for $\DCF_0$ \cite{Blum68} and Wood's axioms for $\DCF_p$ \cite{Wood76}. This is the content of the following theorem.

\begin{theorem}\label{foraxioms}
Let $(K,\d)$ be a differential field. The following are equivalent:
\begin{enumerate}
\item $(K,\d)$ is separably differentially closed
\item For any nonzero $f,g\in K\{x\}$ with $s_f\neq 0$ and $\ord g<\ord f$, there exists $a\in K$ such that $f(a)=0$ and $g(a)\neq 0$.
\end{enumerate}
\end{theorem}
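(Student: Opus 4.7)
The plan is to prove the two implications separately, relying on the description of separable prime differential ideals in $K\{x\}$ provided by Theorem~\ref{sepideal}.

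For (1)$\Rightarrow$(2): given $f, g \in K\{x\}$ with $s_f \neq 0$ and $\ord g < \ord f =: n$, the idea is to realise the formula $f(x) = 0 \land g(x) \neq 0$ in a separable differential extension of $K$ and invoke hypothesis~(1). Write $f = c\prod_i f_i^{e_i}$ as a factorisation into irreducibles in the polynomial ring $K\{x\} = K[x_0, x_1, \ldots]$. Applying $\partial/\partial x_n$ via the Leibniz rule forces some irreducible factor $h$ of $f$ to satisfy $\partial h/\partial x_n \neq 0$ (otherwise $s_f$ would vanish), so $\ord h = n$ and $s_h \neq 0$. By Theorem~\ref{sepideal}(2), $P := [h]:s_h^\infty$ is a separable prime differential ideal in which $h$ is of minimal rank. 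In $L := \mathrm{Frac}(K\{x\}/P)$, the element $a := x + P$ satisfies $f(a) = 0$ (since $h \mid f$) and $g(a) \neq 0$ (since $\ord g < \ord h$ forces $g \notin P$ by minimality). As $L/K$ is separable, hypothesis~(1) transports such a witness back to $K$.

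For (2)$\Rightarrow$(1): I first establish that $(K,\d)$ is non-degenerate. Given any nonzero $f \in K\{x\}$ with $\ord f = n$, set $h(x) := \d^{n+1}x - f(x)$, so $\ord h = n+1$ and $s_h = 1$; applying (2) to the pair $(h, f)$ (noting $\ord f < \ord h$) yields $a \in K$ with $f(a) \neq 0$, so by Lemma~\ref{chardege}, $[K:C_K] = \infty$. Now let $(L,\d)/(K,\d)$ be a separable differential extension and $\bar a \in L^n$ realise an existential $\Ld(K)$-formula; after the standard reductions, this formula may be taken as $\bigwedge_{i=1}^k h_i(\bar v) = 0 \land g(\bar v) \neq 0$ for some $h_i, g \in K\{\bar v\}$. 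Separability of $L/K$ gives separability of $K\langle \bar a\rangle/K$, and Lemma~\ref{specialise} provides a specialisation $\bar\alpha$ of $\bar a$ over $K$ that is constrained over $K$ with constraint $g$; in particular $h_i(\bar\alpha) = 0$ for each $i$. By Lemma~\ref{diffalgentries} each $\alpha_j$ is differentially algebraic over $K$, so combining non-degeneracy with Theorem~\ref{diffPET} produces $c$ with $K\langle\bar\alpha\rangle = K\langle c\rangle$. Then $c$ is constrained over $K$ by Lemma~\ref{propconstrained}(1), and Corollary~\ref{isotype}(2) (inspecting its proof) yields $F, G \in K\{x\}$ with $F$ irreducible, $s_F \neq 0$, $\ord G < \ord F$, $G$ a constraint of $c$, and $I_\d(c/K) = [F]:s_F^\infty$. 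Applying (2) to the pair $(F, G \cdot s_F)$ (both nonzero and with $\ord(G s_F) < \ord F$) yields $c' \in K$ with $F(c') = 0$, $G(c') \neq 0$, and $s_F(c') \neq 0$; the last condition forces $[F]:s_F^\infty \subseteq I_\d(c'/K)$, so $c'$ is a differential specialisation of $c$ over $K$, and since $K\langle c'\rangle \subseteq K$ is automatically separable over $K$ and $G(c') \neq 0$, the constraint property of $G$ gives $I_\d(c'/K) = I_\d(c/K)$. Transporting $\bar\alpha$ through the induced $K$-differential isomorphism $K\langle c\rangle \to K\langle c'\rangle$ produces the required $\bar\alpha' \in K^n$ with $h_i(\bar\alpha') = 0$ and $g(\bar\alpha') \neq 0$.

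The chief obstacle lies in the second direction: the reduction of a multivariate existential statement to the univariate instance supplied by (2) must be routed through constrained tuples, the differential primitive element theorem (after a preliminary non-degeneracy check), and the Section~\ref{describeideals} analysis of separable prime differential ideals. The most delicate point is the passage from $c$ to $c'$: applying (2) to $(F, G)$ alone does not guarantee that $c'$ is a specialisation of $c$, since $s_F$ may vanish at $c'$; pairing $G$ with $s_F$ before invoking (2) is exactly what forces $c'$ to be a regular, not singular, root of $F$, so that the constraint property of $G$ can then close the argument.
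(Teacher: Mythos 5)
Your proof follows the paper's own argument very closely: the forward direction via Theorem~\ref{sepideal} applied to an irreducible factor of $f$ with nonzero separant, and the reverse direction via a preliminary non-degeneracy check, Lemma~\ref{specialise}, Lemma~\ref{diffalgentries}, the differential primitive element theorem, Lemma~\ref{propconstrained}(1), and Corollary~\ref{isotype}(2), is exactly the route taken in the paper. (The paper's non-degeneracy check uses the pair $f=\d^{m+1}x$, $g$ rather than your $\d^{n+1}x-f$, but both are valid.)

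However, there is a genuine gap in your final step. You apply condition (2) to the pair $(F, G\cdot s_F)$ and assert that $\ord(G\cdot s_F)<\ord F$. This is false in general: $s_F=\partial F/\partial x_n$ has order \emph{equal} to $\ord F$ whenever $F$ has degree at least $2$ in its leader $x_n$ (for instance $F=x_1^2-x_0$ gives $s_F=2x_1$ of order $1=\ord F$). Only the rank of $s_F$ is guaranteed to drop below that of $F$, not the order, so condition (2) simply does not apply to the pair $(F,G\cdot s_F)$. The concern you are addressing --- that a root $c'$ of $F$ at which $s_F$ vanishes need not be a differential specialisation of $c$, so that the constraint property of $G$ cannot be invoked --- is a legitimate subtlety, but your patch does not close it. The paper instead applies (2) to $(F,G)$ directly and relies on the full statement of Corollary~\ref{isotype}(2), namely that the differential isomorphism type of $c$ over $K$ is \emph{determined} by $F(x)=0\land G(x)\neq 0$, so that any solution of that system in $K$ realises the type of $c$. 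If one wants to argue as you do from the weaker fact that $G$ is merely a constraint of order $<\ord F$, the correct repair is a second compactness argument rather than multiplication by $s_F$: the set $\{F(x)=0,\ s_F(x)=0\}\cup\{g(x)\neq 0: g\neq 0,\ \ord g<\ord F\}$ is unsatisfiable in differential field extensions of $K$ (a realisation would have defining ideal equal to $[F]:s_F^\infty$ by Corollary~\ref{isotype}(1), and that ideal does not contain $s_F$), so finitely many $g_1,\dots,g_k$ of order $<\ord F$ already witness the inconsistency; replacing $G$ by $G\cdot g_1\cdots g_k$, which genuinely has order $<\ord F$, then forces $s_F(c')\neq 0$ at any solution and the rest of your argument goes through.
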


\begin{proof}
$(1)\Rightarrow (2)$. Let $f,g$ nonzero elements of the differential polynomial ring 
$$K\{x\}=K[x_0,x_1,x_2,\dots]$$
with $s_f\neq 0$ and $\ord g<\ord f$. Since $K\{x\}$ is a UFD, we can factorise $f$ into its irreducible factors; namely, $f=f_1\cdots f_s$. Let $n=\ord f$. Since $s_f\neq 0$ and 
$$s_f=\frac{\partial f}{\partial x_n}=\frac{\partial f_1}{\partial x_n}\; f_2\cdots f_s+\;\cdots\;+f_1\cdots f_{s-1}\;\frac{\partial f_s}{\partial x_n}$$ 
at least one $\frac{\partial f_i}{\partial x_n}$ is nonzero (and so $s_{f_i}\neq 0$ and $\ord g<\ord f_i$). Thus, we may assume (without loss of generality) that $f$ itself is irreducible, By Theorem~\ref{sepideal}, if we set $P=[f]:s_f^\infty$, then $P$ is a separable prime differential ideal of $K\{x\}$. Let $L=$Frac$(K\{x\}/P)$ and $b=x+P$. Then $L/K$ is separable and $f(b)=0$. Since $\ord g<\ord f$, by Theorem~\ref{sepideal}  $g\notin P$, and so $g(b)\neq 0$. In other words,
$$L\models \exists v \left( f(v)=0 \land g(v)\neq 0\right).$$
Since $(K,\d)$ is assumed to be separably differentially closed and $L/K$ is a separable field extension, there is $a\in K$ such that $f(a)=0$ and $g(a)\neq 0$. 

\medskip

$(2)\Rightarrow (1)$ We first note that the current assumption yields that $(K,\d)$ is non-degenerate. Indeed, for any nonzero $g\in K\{x\}$, let $m=\ord g$ and $f=\d^{m+1}x$. Then, $s_f\neq 0$ and $\ord g<\ord f$, and so by the assumption there is $a\in K$ such that $f(a)=0$ and $g(a)\neq 0$. The latter shows non-degeneracy.

Now, to prove that $(K,\d)$ is separably differentially closed, assume that $(L,\d)$ is a differential extension with $L/K$ separable and $\phi(\bar x)$ is a quantifier-free $\Ld(K)$-formula with $\bar x=(x_1,\dots,x_s)$ such that
$$L\models \exists \bar x \phi(\bar x).$$
That is, there is $\bar b$ from $L$ such that $L\models \phi(\bar b)$. Since $\phi$ is quantifier-free, we may assume it is of the form
$$f_1(\bar x)=0\land \cdots\land f_r(\bar x)=0\land h(\bar x)\neq 0.$$
where $f_1,\dots,f_r,h\in K\{\bar x\}$. Since $K\langle\bar b \rangle/K$ is separable, by Lemma~\ref{specialise} there exists a $K$-differential specialisation $\bar \alpha$ of $\bar b$ such that $\bar \alpha$ is constrained over $K$ with constraint $h$. It then follows that
$$K\langle \bar \alpha\rangle \models \phi(\bar\alpha).$$
To finish the proof it suffices to show that $\bar \alpha$ is a tuple from $K$. By Lemma~\ref{diffalgentries}, the extension $K\langle \bar \alpha\rangle/K$ is differentially algebraic (and we know it is also separable, by definition of constrained tuple), and so, since $(K,\d)$ is non-degenerate, Theorem~\ref{diffPET} yields a single element $a\in K\langle \bar \alpha\rangle$ such that 
$$K\langle \bar \alpha\rangle =K\langle a\rangle.$$
It is thus enough to show that $a\in K$. Note that, by Lemma~\ref{propconstrained}(1), $a$ is constrained over $K$. By Corollary~\ref{isotype}(2), there are $f,g\in K\{x\}$ with $\ord g<\ord f$ such that the differential $K$-isomorphism type of $a$ is determined by $f(x)=0 \land g(x)\neq 0$. By the assumption, there is a solution of this differential system in $K$. It follows that $a\in K$, as desired. 
\end{proof}

Condition (2) in Theorem \ref{foraxioms} can be written as a scheme of first-order sentences in the language $\Ld$. We denote this theory $\SDCF$. Thus, for a differential field $(K,\d)$ we have that being separably differentially closed is equivalent to $K\models \SDCF$. Once we specify the characteristic $p$ (zero or prime), we write this theory as $\SDCF_p$.

\begin{remark}\
\begin{enumerate}
\item [(i)] In characteristic zero, the condition $s_f\neq 0$ in Theorem~\ref{foraxioms}(2) is always satisfied. Thus, we recover Blum's axioms and indeed the models of $\DCF_0$ and $\SDCF_{0}$ coincide.
\item [(ii)] In characteristic $p>0$, if we add the \emph{differential perfectness axiom}
$$\forall x \exists y (\d x=0\rightarrow x=y^p)$$
to $\SDCF_p$ we recover Wood's axioms for the theory $\DCF_p$.
\end{enumerate}
\end{remark}

We now prove some basic properties of models of $\SDCF$. In particular, in positive characteristic, they are all separably closed of infinite degree of imperfection. 

\begin{lemma}\label{basicprop} Let $K\models \SDCF_p$ with $p>0$.
\begin{enumerate}
\item [(i)] The extension $K/C_K$ is of infinite degree.
\item [(ii)] $K\models SCF_{p,\infty}$
\item [(iii)] $C_K\models SCF_{p,\infty}$
\end{enumerate}
\end{lemma}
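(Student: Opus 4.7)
The plan is to extract everything from the axiomatisation in Theorem~\ref{foraxioms}, together with Seidenberg's Lemma~\ref{chardege} and the algebraic criterion (iv) for separable closedness from the preliminaries.

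For (i), I would show directly that $(K,\d)$ is non-degenerate: given a nonzero $g\in K\{x\}$ of order $m$, set $f=\d^{m+1}x$, so that $s_f=1\neq 0$ and $\ord g<\ord f$; Theorem~\ref{foraxioms}(2) then produces $a\in K$ with $g(a)\neq 0$. Lemma~\ref{chardege} gives $[K:C_K]=\infty$. For (ii), I would verify criterion (iv) for $K$: any non-constant $f\in K[x]$ with $df/dx\neq 0$ is, as a differential polynomial, of order $0$ with $s_f=df/dx\neq 0$, and taking $g=1$ (whose order is strictly less than $0$) the axioms supply a root $a\in K$. Infinite degree of imperfection then follows from the inclusions $K^p\subseteq C_K\subseteq K$ combined with (i), giving $[K:K^p]\geq[K:C_K]=\infty$, so $K\models\SCF_{p,\infty}$.

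The main work is in (iii), and here the subtlety is that a root produced by (ii) lives in $K$, not \emph{a priori} in $C_K$, so one must force it down. Given a non-constant $f\in C_K[x]$ with $f'\neq 0$, I would pass to its separable part $h=f/\gcd(f,f')$, which remains in $C_K[x]$ because the Euclidean algorithm stays inside $C_K[x]$. Then $h'\neq 0$ and, by (ii), $h$ has a root $a\in K$. Differentiating $h(a)=0$ and using that $h^\d=0$ (the coefficients of $h$ are constants) yields
\[
0=h^\d(a)+h'(a)\,\d a = h'(a)\,\d a,
\]
and separability of $h$ forces $h'(a)\neq 0$, so $\d a=0$, i.e.\ $a\in C_K$, and $f(a)=0$ since $h\mid f$. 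Thus $C_K$ satisfies criterion (iv).

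To finish (iii), note that Frobenius is a field isomorphism $K\to K^p$ that restricts to an isomorphism $C_K\to C_K^p$, so $[K^p:C_K^p]=[K:C_K]$, which is infinite by (i). Combined with the chain $C_K^p\subseteq K^p\subseteq C_K$, this gives $[C_K:C_K^p]\geq[K^p:C_K^p]=\infty$, and hence $C_K\models\SCF_{p,\infty}$. The only step I expect to require real thought is the reduction to the separable part in (iii); once that is in place, the differentiation argument is immediate, and all the remaining assertions are direct unwindings of the axioms together with the algebraic preliminaries.
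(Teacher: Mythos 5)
Your proposal is correct, and parts (i), (ii) and both degree computations follow the paper's argument essentially verbatim: non-degeneracy from the axioms (with $f=\d^{m+1}x$) plus Lemma~\ref{chardege} for (i), criterion (iv) for $K$ plus $K^p\subseteq C_K$ for (ii), and the Frobenius isomorphism argument $[C_K:C_K^p]\geq[K^p:C_K^p]=[K:C_K]=\infty$ for the imperfection degree of $C_K$. The one place you genuinely diverge is the separable closedness of $C_K$ in (iii). The paper disposes of this in one line by citing the general field-theoretic fact (Lemma~\ref{improved}, applied to the intermediate field $C_K$ of $K/K^p$) that $C_K$ is separably algebraically closed in $K$: a root in $K$ of a separable polynomial over $C_K$ is separably algebraic over $C_K$, hence already lies in $C_K$ because its minimal polynomial divides $x^p-a^p=(x-a)^p$ and is separable. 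You instead reduce to the separable part $h=f/\gcd(f,f')\in C_K[x]$ and differentiate $h(a)=0$ to get $h'(a)\,\d a=0$, forcing $\d a=0$. Both are valid; your route is self-contained within differential calculus and makes visible \emph{why} the root is a constant, while the paper's is shorter and reuses a lemma it needs elsewhere anyway (for Remark~\ref{better}). Your reduction to the separable part is carried out correctly (if $f'\neq 0$ then $h$ is a nonconstant product of distinct separable irreducibles, so $\gcd(h,h')=1$ and $h'(a)\neq 0$ at any root), and the minor point of taking $g=1$ of ``negative order'' in (ii) matches the paper's own implicit reading of its axiom scheme.
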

\begin{proof}
(i) At the beginning of the proof of (2) of Theorem~\ref{foraxioms} we argued that $(K,\d)$ is non-degenerate. The result now follows from Lemma~\ref{chardege}. 

(ii) By the axioms, it follows that any separable polynomial over $K$ has a root in $K$, and thus $K$ is separably closed. By part (i), $[K:C_K]$ is infinite. Since $K^p$ is a subfield of $C_K$, it follows that $[K:K^p]$ is also infinite.

(iii) That $C_K$ is separably closed follows from (ii) using the general fact that $C_K$ is separably algebraically closed in $K$. Now, since $[K:C_K]$ is infinite (by non-degeneracy), we have that $[K^p:C_K^p]$ is also infinite (by applying Frobenius morphism) and thus $[C_K:C_K^p]$ is also infinite.
\end{proof}

We now aim to exhibit several characterisations of being separably differentially closed. One of them in terms of being constrainedly closed. 

\begin{definition}\label{defcon}
A differential field $(K,\d)$ is said to be constrainedly closed if for every (finite) tuple $\bar a$, from a differential field extension, the following holds: if $\bar a$ is constrained over $K$ then each entry of $\bar a$ is in $K$.
\end{definition} 

We point out that in \cite[\S 2-3]{Kol74} Kochin considered constrainedly closed differential fields in \emph{characteristic zero}. Our definition here differs from his (in positive characteristic) by a subtle point. A differential field extension of $K$ is said to be a constrained extension if every (finite) tuple from that field is constrained over $K$. Kolchin then defines constrainedly closed as:
$$(\dagger) \quad K \text{ has no proper constrained extension.}$$
It turns out that in characteristic zero our definition coincides with Kochin's ($\dagger$). This is an immediate consequence of the following fact.

\begin{fact}\label{conext}\cite[Proposition 1]{Kol74}
Suppose $(K,\d)$ is of characteristic zero. If $\bar a$ is a tuple constrained over $K$, then any tuple in $K\langle \bar a\rangle$ is also constrained over $K$.
\end{fact}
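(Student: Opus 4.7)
The plan is to deduce this fact almost immediately from Lemma~\ref{propconstrained}(2), once we observe that the separability hypothesis appearing there is automatic in characteristic zero. Recall that Lemma~\ref{propconstrained}(2) says: if $\bar \alpha$ is constrained over $K$ and $\bar \beta$ is a tuple from $K\langle \bar \alpha\rangle$ such that $K\langle \bar \alpha\rangle/K\langle \bar \beta\rangle$ is separable, then $\bar \beta$ is also constrained over $K$.

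So the proof goes as follows. Let $\bar a$ be constrained over $K$ and let $\bar b$ be an arbitrary (finite) tuple from $K\langle \bar a\rangle$. Since $(K,\d)$ is of characteristic zero, every field extension is separable; in particular, the extension $K\langle \bar a\rangle/K\langle \bar b\rangle$ is separable. Applying Lemma~\ref{propconstrained}(2) with $\bar \alpha=\bar a$ and $\bar \beta=\bar b$, we conclude that $\bar b$ is constrained over $K$, as required.

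There is no real obstacle here; the entire content of Fact~\ref{conext} is that the separability hypothesis in Lemma~\ref{propconstrained}(2) — which in positive characteristic is a genuine restriction — disappears in characteristic zero. In other words, this fact should be regarded as a clean corollary of the previously established preservation properties of constrained tuples, rather than as a separate technical result. One could also give a more hands-on proof by directly exhibiting a constraint for $\bar b$ from a constraint $g$ for $\bar a$ together with a rational expression of $\bar b$ in terms of $\bar a$, but invoking Lemma~\ref{propconstrained}(2) is strictly shorter and avoids recomputation.
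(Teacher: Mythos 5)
Your derivation is correct. The key point — that in characteristic zero every field extension is separable, so the separability hypothesis in Lemma~\ref{propconstrained}(2) is vacuous — is exactly right, and applying that lemma with $\bar\alpha=\bar a$ and $\bar\beta=\bar b$ immediately yields the statement. Note, however, that the paper does not prove this fact at all: it is stated as an imported result, cited to \cite[Proposition 1]{Kol74}, just as Lemma~\ref{propconstrained} itself is cited to \cite[\S III.10]{Kolbook}. So what you have really done is observe that one of the paper's two black boxes follows formally from the other. That is a worthwhile observation (it shows the characteristic-zero statement carries no content beyond the preservation properties already assumed), and there is no circularity since the two citations are independent sources; but strictly speaking you are not replacing a proof the paper gives, only supplying a derivation the paper chose to outsource.
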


However, this fact does {\bf not} hold in positive characteristic. Consider the field $\mathbb F_p$ with $p$ elements and the function field $\mathbb F_p(t)$ with derivation $\d(t)=1$. Then, $t$ is constrained over $\mathbb F_p$; however, $t^p$ is not (since $\d(t^p)=0$). 

Now, not only does Fact~\ref{conext} fail in characteristic $p>0$, but, as the following lemma shows, condition ($\dagger$) is too weak of a condition. 

\begin{lemma}
A differential field $(K,\d)$ of characteristic $p>0$ satisfies $(\dagger)$ if and only if $K$ is separably closed.
\end{lemma}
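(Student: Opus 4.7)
The plan is to prove each implication separately, with both directions funnelling through the description of separable prime differential ideals as $[f]:s_f^\infty$ provided by Theorem~\ref{sepideal} and the uniqueness in Corollary~\ref{isotype}(1).

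For $(\Rightarrow)$, suppose $(K,\d)$ has no proper constrained extension and let $h\in K[x]$ be irreducible separable; I want to produce a root of $h$ in $K$. View $h$ as an order-$0$ element of $K\{x\}$, so $s_h=h'\neq 0$, and set $P:=[h]:h'^\infty$. By Theorem~\ref{sepideal}(2), $P$ is a separable prime differential ideal, and by Corollary~\ref{isotype}(1) (whose ``avoid lower order'' clause is vacuous at order $0$) it is the unique proper prime differential ideal containing $h$. Hence $L:=\operatorname{Frac}(K\{x\}/P)=K(a)$, with $a:=x+P$, is a finite separable algebraic differential extension of $K$ in which $h(a)=0$. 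The next step is to show $L/K$ is a constrained extension: $a$ is constrained with trivial constraint $g=1$, because any differential specialisation $\beta$ of $a$ with $K\langle\beta\rangle/K$ separable has $I_\d(\beta/K)\supseteq P$ which by uniqueness equals $P$; and for any finite tuple $\bar b$ from $L$, the extension $L/K\langle\bar b\rangle$ is a subextension of the finite separable algebraic $L/K$, hence separable, so Lemma~\ref{propconstrained}(2) gives that $\bar b$ is constrained over $K$. Applying $(\dagger)$ forces $L=K$, whence $a\in K$ is the desired root.

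For $(\Leftarrow)$, assume $K$ is separably closed and, for a contradiction, let $L/K$ be a constrained extension with some $b\in L\setminus K$. By Theorem~\ref{sepideal}(1) we may write $I_\d(b/K)=[f]:s_f^\infty$ for an irreducible $f\in K\{x\}$ with $s_f\neq 0$, and set $n:=\ord f$. If $n=0$ then $f\in K[x]$ is separable irreducible, hence of degree one by separable closedness of $K$, giving $b\in K$ --- a contradiction. In the substantive case $n\geq 1$, minimality of $f$ forces $b,\d b,\ldots,\d^{n-1}b$ to be algebraically independent over $K$, so in particular $b$ is transcendental over $K$, and I pass to $b^p\in L$: a \emph{transcendental constant} over $K$. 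Because $L/K$ is a constrained extension, $b^p$ must be constrained over $K$; the remainder of the argument shows this is impossible.

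This last impossibility is the main obstacle. Any constraint $g\in K\{y\}$ for $b^p$ must vanish at every non-generic differential specialisation $c$ with $K(c)/K$ separable, and each $c\in C_K$ furnishes such a specialisation: $K(c)/K$ is trivially separable, and $I_\d(c/K)=(y_0-c,y_1,y_2,\ldots)$ strictly contains $I_\d(b^p/K)=[y_1]$, making $c$ non-generic. Since $\d c=0$, the value $g(c)$ equals $\bar g(c)$ where $\bar g(y_0):=g(y_0,0,0,\ldots)\in K[y_0]$; thus $\bar g$ vanishes on $C_K$, which is infinite because $K^p\subseteq C_K$ and $K$ is infinite. Hence $\bar g\equiv 0$ and $g(b^p)=\bar g(b^p)=0$, contradicting $g(b^p)\neq 0$. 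The delicate point is the choice of element: $b$ itself, constrained by hypothesis, gives no immediate contradiction, but $b^p$ is a transcendental constant precisely of the sort that separable closedness of $K$ --- with no further differential assumption on $K$ --- is strong enough to rule out carrying a constraint.
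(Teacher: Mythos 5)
Your proof is correct and takes essentially the same route as the paper's: the forward direction is a detailed verification of the paper's one-line observation that every separably algebraic extension is a constrained extension, and the reverse direction is exactly the paper's device of passing from a transcendental $b$ to the transcendental constant $b^p$ and showing it cannot be constrained. You simply supply details the paper leaves implicit, namely the ideal-theoretic construction of $K(a)$ as a constrained extension via Theorem~\ref{sepideal} and Corollary~\ref{isotype}, and the explicit argument that a constraint for a transcendental constant would have to vanish on the infinite set $C_K$.
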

\begin{proof}
($\Rightarrow$) This follows from the fact that every separably algebraic extension of $K$ is a constrained extension.

($\Leftarrow$) Let $(L,\d)$ be a constrained extension and $a\in L$ (in particular, $a$ is constrained over $K$). Towards a contradiction, assume $a\notin K$. Since $K$ is separably closed and $K\langle a\rangle$ is separable over $K$, we get that $a$ is transcendental over $K$. Hence, so is $a^p$. But, since $\d(a^p)=0$, $a^p$ cannot be constrained over $K$. This contradicts the fact that $L$ is a constrained extension.
\end{proof}

In particular, any separably closed field of characteristic $p>0$ equipped with the trivial derivation satisfies ($\dagger$). All this discussion is to argue that condition ($\dagger$) used by Kolchin in characteristic zero is not the right one to define constrainedly closed in positive characteristic. The following theorem justifies that our definition here (namely,  Definition~\ref{defcon}) seems to be the correct one.

\begin{theorem}\label{sevchar}
Let $(K,\d)$ be a differential field (arbitrary characteristic). The following are equivalent:
\begin{enumerate}
\item $(K,\d)$ is separably differentially closed.
\item for every differentially algebraic extension $(L,\d)$ of $(K,\d)$, if $L/K$ is separable (as fields) then $(K,\d)$ is existentially closed in $(L,\d)$.
\item $(K,\d)$ is constrainedly closed (in the sense of Def.\ref{defcon}).
\item for each $n$ and every separable prime differential ideal $P$ of $K\{x_1,\dots,x_n\}$, if $g\in K\{x_1,\dots,x_n\}\setminus P$ then there is $\bar a\in K^n$ such that $f(\bar a)=0$ for all $f\in P$ and $g(\bar a)\neq 0$.
\end{enumerate}
\end{theorem}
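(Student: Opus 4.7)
The plan is to close the cycle $(1)\Rightarrow(2)\Rightarrow(3)\Rightarrow(4)\Rightarrow(1)$. The implication $(1)\Rightarrow(2)$ is immediate, since any differentially algebraic separable extension is a fortiori a separable extension. For $(4)\Rightarrow(1)$, I would take a consistent quantifier-free $\Ld(K)$-formula $\phi(\bar v)$, reduce it to the form $f_1=0\land\cdots\land f_r=0\land h\neq 0$, pick a witness $\bar b$ in the given separable extension $L$, and set $P=I_\d(\bar b/K)$; because $L/K$ is separable, $P$ is a separable prime differential ideal of $K\{\bar x\}$ containing $f_1,\dots,f_r$ but not $h$, and $(4)$ immediately produces a realisation of $\phi$ in $K^n$. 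The implication $(3)\Rightarrow(4)$ is equally short: given a separable prime differential ideal $P$ of $K\{\bar x\}$ and $g\notin P$, the generic point $\bar b$ in $\mathrm{Frac}(K\{\bar x\}/P)$ satisfies $g(\bar b)\neq 0$, so Lemma~\ref{specialise} yields a differential specialisation $\bar\alpha$ of $\bar b$ over $K$ that is constrained over $K$ with constraint $g$, and $(3)$ places $\bar\alpha$ in $K^n$, giving the required tuple.

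The crux is $(2)\Rightarrow(3)$. First I would upgrade $(2)$ to non-degeneracy of $(K,\d)$ by the same trick as in the proof of Theorem~\ref{foraxioms}: for nonzero $g\in K\{x\}$ with $m=\ord g$, the separable prime differential ideal $P=[\d^{m+1}x]$ (whose separant is $1$) yields a differentially algebraic separable extension $\mathrm{Frac}(K\{x\}/P)$ containing an element on which $g$ does not vanish, and $(2)$ transfers this fact to $K$. Now let $\bar\alpha$ be constrained over $K$. Lemma~\ref{diffalgentries} makes $K\langle\bar\alpha\rangle/K$ differentially algebraic, and it is separable by definition of constrained, so the differential primitive element theorem (Theorem~\ref{diffPET}) gives a single $a$ with $K\langle\bar\alpha\rangle=K\langle a\rangle$, and Lemma~\ref{propconstrained}(1) keeps $a$ constrained over $K$. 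Corollary~\ref{isotype}(2) then furnishes $f,g\in K\{x\}$ with $\ord g<\ord f$ determining the differential $K$-isomorphism type of $a$ via $f(x)=0\land g(x)\neq 0$. Applying $(2)$ to the differentially algebraic separable extension $K\langle a\rangle/K$ extracts $b\in K$ satisfying the same formula, and Corollary~\ref{isotype}(1) identifies the differential types of $a$ and $b$ over $K$; the induced $K$-isomorphism $K\langle a\rangle\to K\langle b\rangle=K$ forces $a\in K$, hence $\bar\alpha\in K^n$.

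The main obstacle is this last implication: although the individual ingredients (non-degeneracy, differential PET, constrained specialisations, and the type description in Corollary~\ref{isotype}) are all in hand, they must be assembled while simultaneously tracking differential algebraicity and separability of the intermediate extensions, so that $(2)$ (rather than the weaker $(1)$) can be invoked. The remaining implications amount to routine bookkeeping with defining differential ideals and constrained specialisations.
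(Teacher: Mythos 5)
Your proposal is correct and closes the same cycle $(1)\Rightarrow(2)\Rightarrow(3)\Rightarrow(4)\Rightarrow(1)$ as the paper, with $(1)\Rightarrow(2)$ immediate and $(3)\Rightarrow(4)$ handled identically (generic point of $P$, Lemma~\ref{specialise}, then constrained closedness); but the other two arrows are proved by genuinely different means. For $(2)\Rightarrow(3)$ the paper does not reduce to a single element at all: it invokes the Differential Basis Theorem (Theorem~\ref{diffbasis}) to write $I_\d(\bar\alpha/K)=\{f_1,\dots,f_s\}$ as a finitely generated radical differential ideal and applies $(2)$ directly to the $n$-variable system $f_1=\cdots=f_s=0\land g\neq 0$, where $g$ is a constraint. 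The payoff is that any solution $\bar b\in K^n$ automatically satisfies $I_\d(\bar\alpha/K)\subseteq I_\d(\bar b/K)$, since its defining ideal is a radical differential ideal containing the generators; so $\bar b$ is a differential specialisation and the constraint forces genericity, with no need for non-degeneracy, the differential primitive element theorem, or Corollary~\ref{isotype}. Your route, which re-runs the machinery of Theorem~\ref{foraxioms}$(2)\Rightarrow(1)$ with hypothesis $(2)$ applied to $K\langle a\rangle/K$ in place of the axiom scheme, is also sound, but it carries the extra burden of knowing that an arbitrary $b\in K$ with $f(b)=0\land g(b)\neq 0$ realises the same differential type as $a$ -- this is exactly what the phrase ``determined by'' in Corollary~\ref{isotype}(2) encapsulates (Corollary~\ref{isotype}(1) alone does not suffice, since $I_\d(b/K)$ could a priori contain some lower-order polynomial other than $g$) -- whereas the Basis Theorem argument sidesteps the issue entirely. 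For $(4)\Rightarrow(1)$ you argue directly from the definition of existential closedness, taking $P=I_\d(\bar b/K)$ for a witness $\bar b$ in the separable extension; the paper instead verifies the one-variable axioms of Theorem~\ref{foraxioms}(2) via Theorem~\ref{sepideal}. Your version is marginally more self-contained; the paper's exploits the axiomatisation already in hand. Either way the theorem is proved, and your assessment that $(2)\Rightarrow(3)$ is the crux reflects your chosen route rather than an intrinsic difficulty: with the Basis Theorem it becomes the shortest step.
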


\begin{proof}
$(1)\Rightarrow (2)$ Immediate from the definition.

$(2)\Rightarrow (3)$ Let $\bar a$ be a constrained tuple over $K$. Then, $K\langle \bar a\rangle/K$ is separable and, by Lemma~\ref{diffalgentries}, $K\langle \bar a\rangle$ is a differentially algebraic extension of $K$. Let $g$ be a constraint for $\bar  a$. By Theorem~\ref{diffbasis}, the differential defining ideal of $a$ over $K$ is finitely generated as a radical differential ideal; that is, $I_\d(\bar a/K)=\{f_1,\dots,f_s\}$. Consider the system
$$f_1=0\land \cdots \land f_s=0\land g\neq 0.$$
By the assumption, there is $\bar b$ from $K$ satisfying this system, but then $I_\d(\bar a/K)=I_\d(\bar b/K)$. It then follows that $\bar a$ is from $K$, and hence $K$ is constrainedly closed.

$(3)\Rightarrow (4)$ Let $\bar x=(x_1,\dots,x_n)$. Consider the differential field $Frac(K\{\bar x\}/P)$ and let $\bar a=\bar x+P$. Then, $K\langle \bar a\rangle/K$ is separable, $P=I_\d(\bar a/K)$ and $g(\bar a)\neq 0$. By Lemma~\ref{specialise}, there is a $K$-differential specialisation $\bar b$ of $\bar a$ such that $\bar b$ is constrained and $g(\bar b)\neq 0$. In particular, $P\subseteq I_\d(\bar b/K)$. Since $(K,\d)$ is constrainedly closed, we get $\bar b$ is from $K$ and satisfies the desired conditions.

$(4)\Rightarrow (1)$ Let $f,g\in K\{x\}$ be nonzero with $s_f\neq 0$ and $\ord g<\ord f$. By Theorem~\ref{foraxioms}, it suffices to prove that there is $a\in K$ with $f(a)=0\land g(a)\neq 0$. Since $f$ must have an irreducible factor of the same order as $f$ and with nonzero separant, we may assume that $f$ is already irreducible. By Theorem~\ref{sepideal}, $P=[f]:s_f^\infty$ is a separable prime differential ideal of $K\{x\}$ and $g\notin P$. Now the assumptions yields the desired $a\in K$.
\end{proof}

Lastly, we provide a geometric axiomatisation for $\SDCF$ in the spirit of the Pierce-Pillay axioms for DCF$_0$ \cite{PiPi98}. We note that such geometric axioms for DCF$_p$, $p>0$, appear in \cite[\S 3]{Gogo21}. Let $(K,\d)$ a differential field of arbitrary characteristic. To formulate the geometric axioms we recall that given an algebraic variety $V$ over $K$ there exists an algebraic bundle $\pi:\tau V\to V$ over $K$ called that prolongation of $V$ that has the following characteristic property: for any differential field extension $(L,\d)$ of $(K,\d)$ if $a\in V(L)$ then $(a,\d a)\in \tau V(L)$. In the case when $V$ is affine, say $K[V]=K[x_1,\dots,x_n]/I$ with $I$ a radical ideal, the following equations define the prolongation
$$f(\bar x)=0\quad \text{ and }\quad \sum_{i=1}^n\frac{\partial f}{\partial x_i}(\bar x)\cdot y_i+f^\d(\bar x)=0,$$
as $f$ varies in generators of $I$ (the defining ideal of $V$). Also recall that, in case $V$ is $K$-irreducible, we say that $V$ is separable if the function field $K(V)$ is separable over $K$. If $W$ is another $K$-irreducible variety and $\phi:W\to V$ is a morphism (over $K$), we say that $\phi$ is separable if it is dominant and the function field $K(W)$ is separable over the function field $K(V)$ (or rather its isomorphic copy $\phi^*(K(V))$).

\begin{theorem}\label{geoaxioms}
Let $(K,\d)$ be a differential field (of arbitrary characteristic). Then, the following are equivalent.
\begin{enumerate}
\item $K\models \SDCF$
\item Let $V$ and $W$ be $K$-irreducible affine varieties over $K$ with $W\subseteq \tau V$, $W$ separable, and $\pi|_{W}:W\to V$ separable. If $O_V$ and $O_W$ are nonempty Zariski-open subsets over $K$ of $V$ and $W$, respectively, then there is a $K$-rational point $a\in O_V$ such that $(a,\d a)\in O_W$
\end{enumerate}
\end{theorem}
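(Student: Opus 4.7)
For $(1) \Rightarrow (2)$: I would start by taking a generic point $(\alpha, \beta)$ of $W$ over $K$; dominance of $\pi|_W$ then makes $\alpha$ generic in $V$. The condition $(\alpha, \beta) \in \tau V$ is exactly the consistency condition needed to extend $\d|_K$ to a $K$-derivation $\partial_0$ on $K(\alpha) = K(V)$ sending each $\alpha_i$ to $\beta_i$. Using that separability of $\pi|_W$ makes $K(W)/K(V)$ separable, I would extend $\partial_0$ to a derivation $\partial$ on $K(W)$ (standard fact: a derivation from a field into an overfield extends across any separable extension). Separability of $W$ gives $K(W)/K$ separable, so $(K(W), \partial)$ is a separable differential extension of $(K, \d)$, and hypothesis (1) provides existential closure of $K$ inside it. Since $O_V$ and $O_W$ are nonempty Zariski-opens of the irreducible $V$ and $W$, they contain the generic points $\alpha$ and $(\alpha, \beta) = (\alpha, \partial \alpha)$; the existential $\Ld(K)$-formula ``$\exists a\,(a \in O_V \wedge (a, \d a) \in O_W)$'' thus holds in $K(W)$ and transfers to $K$.

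For $(2) \Rightarrow (1)$: By Theorem~\ref{foraxioms} it suffices, for every $f, g \in K\{x\}$ with $s_f \neq 0$ and $\ord g < \ord f$, to find $a \in K$ with $f(a) = 0$ and $g(a) \neq 0$. Following the argument of Theorem~\ref{foraxioms}, I would reduce to the case that $f$ is irreducible in $K\{x\}$, hence also irreducible in $K[x_0, \ldots, x_n]$ with $n = \ord f$. The construction is to let $V \subseteq \mathbb{A}^{n+1}$ be the $K$-irreducible hypersurface defined by $f$, and inside $\tau V$ (with coordinates $(x_0, \ldots, x_n, y_0, \ldots, y_n)$) to cut out $W$ by $y_i = x_{i+1}$ for $i < n$; the leftover prolongation equation becomes $s_f y_n + \sum_{i=0}^{n-1} \frac{\partial f}{\partial x_i} x_{i+1} + f^\d = 0$, which determines $y_n$ on $\{s_f \neq 0\}$, so $\pi|_W$ is birational. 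From this one checks that $W$ is $K$-irreducible, that $\pi|_W$ is separable, and that $K(W) = K(V)$ is separable over $K$ (because $s_f \neq 0$ makes the minimal polynomial of $x_n$ over $K(x_0, \ldots, x_{n-1})$ separable). Taking $O_V = V \cap \{s_f \neq 0, g \neq 0\}$ (nonempty because $\ord s_f, \ord g < n$ keeps $s_f$ and $g$ outside $(f)$ in $K[x_0, \ldots, x_n]$) and $O_W = \pi|_W^{-1}(O_V)$, hypothesis (2) would produce a $K$-rational point $a = (a_0, \ldots, a_n)$ with $(a, \d a) \in W$; the relations $y_i = x_{i+1}$ then force $a_i = \d^i a_0$, so $a_0 \in K$ satisfies $f(a_0) = 0$ and $g(a_0) \neq 0$ as differential polynomial evaluations.

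The main obstacle is the extension-of-derivation step in $(1) \Rightarrow (2)$: one has to use the prolongation condition to define $\partial$ on $K(V)$ and separability of $\pi|_W$ to lift $\partial$ across the fibre coordinates $\beta$ to a derivation on $K(W)$. The reverse direction is mostly combinatorial bookkeeping once $W \subseteq \tau V$ is engineered so that the constraints $y_i = x_{i+1}$ package a $K$-rational point of $W$ as the prolongation of a single element of $K$.
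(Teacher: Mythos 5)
Your proof is correct, and the forward direction $(1)\Rightarrow(2)$ is essentially identical to the paper's: generic point of $W$, derivation on $K(V)$ from the prolongation condition, extension across the separable extension $K(W)/K(V)$, then existential closedness. In the direction $(2)\Rightarrow(1)$ you take a genuinely different (though closely related) route. The paper starts from the separable prime differential ideal $P=[f]:s_f^\infty$ of Theorem~\ref{sepideal}, sets $a=x+P$, and takes $V=loc(a,\dots,\d^{n-1}a/K)$ --- which is all of $\mathbb{A}^n$ --- and $W=loc\bigl((a,\dots,\d^{n-1}a),\d(a,\dots,\d^{n-1}a)/K\bigr)\subseteq\tau V$, so irreducibility and separability of $V$, $W$, and $\pi|_W$ come for free from the generic point of $P$. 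You instead build $V$ as the hypersurface $Z(f)\subseteq\mathbb{A}^{n+1}$ and $W$ as the section of $\tau V\to V$ over $\{s_f\neq 0\}$ cut out by $y_i=x_{i+1}$, obtaining separability directly from $s_f\neq 0$ and irreducibility of $f$; this is closer to the original Pierce--Pillay construction and avoids invoking Theorem~\ref{sepideal} at this point. Two small repairs: (a) you should take $W$ to be the Zariski closure of the graph over $\{s_f\neq 0\}$ (equivalently, the locus of its generic point) rather than the naive intersection of $\tau V$ with $\{y_i=x_{i+1}\}$, since the latter may acquire extra components over $\{s_f=0\}$ and need not be $K$-irreducible; the relations $y_i=x_{i+1}$ still hold on the closure, so your final bookkeeping is unaffected. (b) The justification ``$\ord s_f<n$'' for $O_V\neq\emptyset$ is not right in general --- $s_f$ can still involve $x_n$ when $\deg_{x_n}f\geq 2$; what you actually need is $s_f\notin(f)$ in $K[x_0,\dots,x_n]$, which follows from $\deg_{x_n}s_f<\deg_{x_n}f$ together with irreducibility of $f$ (for $g$ the order bound does suffice). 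With these adjustments both directions go through.
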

\begin{proof}
(1)$\Rightarrow$(2) Let $(a,b)$ be a $K$-generic point of $W$. By the assumptions, $a$ is a $K$-generic of $V$, and $K(a,b)/K$ and $K(a,b)/K(a)$ are separable extensions. Furthermore, since $W\subseteq \tau V$, there is a derivation $\d:K(a)\to K(a,b)$ extending that in $K$ such that $\d(a)=b$. Since $K(a,b)/K(a)$ is separable, we can further extend the derivation to $\d:K(a,b)\to K(a,b)$ (See for instance, Jacobson \cite[IV.7]{Jacobson64}).

Then, in the differential extension $(K(a,b),\d)$ of $(K,\d)$ we can find a solution to $x\in O_V \land (x,\d x)\in O_W$ (namely, the tuple $a$). Since $K(a,b)/K$ is separable, the fact that $(K,\d)$ is separably differentially closed tells us that we can find a solution in $K$. 

\smallskip

(2)$\Rightarrow$(1) Let $f,g\in K\{x\}$ nonzero with $s_f\neq 0$ and $\ord g<\ord f$. We must find a solution to $f(x)=0\land g(x)\neq 0$ in $K$. We may assume $f$ is irreducible in $K\{x\}$. Then $P=[f]:s_f^\infty$ is a separable prime ideal (by Theorem~\ref{sepideal}). Let $a=x+P$ in the fraction field of $K\{x\}/P$. Letting $n=\ord f$, we see that $(a,\d a,\dots,\d^{n-1}a)$ is algebraically independent over $K$ and $a$ is separably algebraic over $K(a,\dots,\d^{n-1}a)$. In particular, $K(a,\dots,\d^n a)$ is separable over $K$. Now let $c=(a,\dots,\d^{n-1} a)$, $V=loc(c/K)$ and $W=loc(c,\d c/K)$. It follows that $K(W)/K$ and $K(W)/K(V)$ are both separable extensions. Since $\ord g<\ord f$ and $g(a)\neq 0$, we see that the Zariski-open set $O_V:=V\setminus Z(g)$ is nonempty (here $Z(g)$ denotes the vanishing of $g$ as a polynomial in $n$ variables). We can thus apply the geometric assumptions and obtain a tuple $b=(b_0,\dots,b_{n-1})$ from $K$ such that $b\in O_V$ and $(b,\d b)\in W$. The first condition yields $g(b_0)\neq 0$, while the second condition yields $f(b_0)=0$; thus $b_0\in K$ is the desired point.
\end{proof}

\section{Completions of the theory $\SDCF_{p}$}\label{completions}

In this section, we describe the completions of the theory SDCF$_p$. Recall that $\SDCF_p$ denotes the theory of separably differentially closed fields of characteristic $p$. When $p=0$ this theory coincides with DCF$_0$ which we know is complete (among many other things, see \cite[\S 2]{MMP}). However, for $p>0$, SDCF$_p$ is not complete. Recall, from Lemma~\ref{basicprop}, that if $(K,\d)\models \SDCF_p$ then $[K:C_K]=\infty$ and $[K:K^p]=\infty$. However, $[C_K:K^p]$ is underdetermined; and in fact we will see that for any $\epsilon\in \NN\cup\{\infty\}$ we can find a model $(K,\d)\models \SDCF_p$ with $[K:C_K]=p^\epsilon$ (we call $\epsilon$ the differential degree of imperfection of $(K,\d)$). Furthermore, we will prove that the degree $[K:C_K]$ determines the completions; in other words, if $(K,\d)$ and $(L,\d)$ are models of $\SDCF_p$ with $[C_K:K^p]=[C_L:L^p]$ then $K$ and $L$ are elementarily equivalent in the language of differential fields.

\smallskip

Our arguments are modelled after the ones used in SCF$_p$ \cite{Ershov67, Wood79}. That is, we first expand the language to obtain a model-completeness and an amalgamation result, from which completeness will follow. But first, we investigate the differential analogues of a degree of imperfection, $p$-independency, and $p$-basis.

\medskip

For the remainder of this section, we assume $p>0$.

\subsection{Differential degree of imperfection} Let $(K,\d)$ be a differential field of characteristic $p>0$. By Lemma~\ref{propE1}(1) in the preliminaries, there is $\epsilon\in \NN\cup \{\infty\}$ such that
$$[C_K:K^p]=p^\epsilon$$
where $p^{\infty}$ just means that $[C_K:K^p]$ is infinite. We call $\epsilon$ the \emph{differential degree of imperfection of} $(K,\d)$. Sometimes we write this by $\epsilon=\epsilon(K)$. 

\begin{definition} Recall that $\Ld$ denotes the language of differential fields. Let $\epsilon\in \NN\cup \{\infty\}$
\begin{enumerate}
\item By DF$_p$ we mean the theory of differential fields of characteristic $p$.
\item Clearly the condition (on a differential field) of having differential degree of imperfection $\epsilon$ can be expressed as first-order axioms in the language $\Ld$. After adding these axioms to DF$_p$, we denote the new theory by DF$_{p,\epsilon}$.
\item Similarly, the theory of separably differentially closed fields in characteristic $p$ of differential degree of imperfection $\epsilon$ is denoted $\SDCFe$.
\end{enumerate}
\end{definition}

\begin{remark}
Having a differential degree of imperfection $\epsilon(K)=0$ is equivalent to $C_K=K^p$; in other words, $(K,\d)$ is differentially perfect. We thus see that $\SDCF_{p,0}$ coincides with the theory DCF$_p$ of differentially closed in characteristic $p$ \cite{Wood73}.
\end{remark}

Recall that for $A\subseteq K$, the set of $p$-monomials of $A$ is
$$m(A)=\{a_1^{i_1}\cdots a_n^{i_n}:a_1,\dots,a_n\in A \text{ and }0\leq i_1,\dots,i_n<p \}.$$
We say that $A$ is a \emph{differentially $p$-independent} subset of $(K,\d)$ if $A\subseteq C_K$ and the $p$-monomials of $A$ are linearly independent over $K^p$. Similarly, we say that $A$ is a \emph{differentially $p$-spanning} set of $(K,\d)$ if $A\subseteq C_K$ and the $p$-monomials of $A$ span $C_K$ over $K^p$. Finally, $A$ is said to be a \emph{differential $p$-basis} for $(K,\d)$ if $A$ is differentially $p$-independent and a differentially $p$-spanning set for $(K,\d)$. Note that when $C_K=K^p$ (i.e., $(K,\d)$ is differentially perfect), the empty set is a differential $p$-basis for $(K,\d)$.

\begin{remark} \
\begin{enumerate}
\item For $\epsilon\in \NN\cup \{\infty\}$, the differential field $(K,\d)$ has differential degree of imperfection $\epsilon$ if and only if it has a differential $p$-basis of size $\epsilon$. This is a consequence of Lemma~\ref{propE1}(2).
\item If the derivation on $K$ is trivial (namely, $\d\equiv 0$) or equivalently $C_K=K$, a differential $p$-basis for $(K,\d)$ coincides with an (algebraic) $p$-basis for $K$. In particular, the differential degree of imperfection of $(K,\d)$ would coincide with the field-theoretic degree of imperfection of $K$.
\end{enumerate}
\end{remark}

Part (2) of the above remark gives the existence of differential fields with arbitrary differential degree of imperfection. Indeed, simply take $\mathbb F_p(t_1,\dots,t_\epsilon)$ with the trivial derivation. For what remains of this section we prove several basic results on differential $p$-basis and separability that will be useful later. First,  we recall the following well-known fact on extending derivations in fields of positive characteristic.

\begin{fact}\cite[Lemma 1(i)]{Wood73}\label{extendder}
Let $b\in C_K$ and $c$ a p-th root of $b$ in an algebraic closure of $K$. Then, there is a unique derivation on $\d:K(c)\to K(c)$ extending that on $K$ such that $\d(c)=0$.
\end{fact}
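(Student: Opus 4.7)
The plan is to apply the standard criterion for extending a derivation along a simple algebraic extension, and to exploit the fact that in characteristic $p$ two separate obstructions collapse: one because $b$ is a $\d$-constant, the other because the Frobenius kills the formal derivative of $x^p-b$.

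First I would dispose of the trivial case $c\in K$, in which $K(c)=K$ and the only derivation on $K(c)$ extending $\d$ is $\d$ itself; so the fact is implicitly interesting only when $c\notin K$, equivalently $b\notin K^p$. Assuming $c\notin K$, since $c$ is a root of $x^p-b=(x-c)^p$, the minimal polynomial of $c$ over $K$ must be of the form $(x-c)^d$ with $d\leq p$; requiring the coefficients to lie in $K$ forces $dc\in K$, whence $d=p$ (otherwise $d$ would be invertible mod $p$ and we would get $c\in K$). Thus $f(x):=x^p-b$ is the minimal polynomial of $c$ and $[K(c):K]=p$. Next I would invoke the standard criterion: any derivation $D:K(c)\to K(c)$ restricting to $\d$ on $K$ is completely determined by its value $D(c)$, and the single constraint on $D(c)$ is obtained by applying $D$ to $f(c)=0$, namely
\begin{equation*}
f^\d(c)+f'(c)\,D(c)=0,
\end{equation*}
where $f^\d$ denotes the polynomial obtained by applying $\d$ to the coefficients of $f$ and $f'$ is the usual polynomial derivative. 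The core computation is that $f^\d(x)=-\d(b)=0$ because $b\in C_K$, while $f'(x)=p\,x^{p-1}=0$ because we are in characteristic $p$. The constraint therefore reduces to $0=0$ and is vacuous; $D(c)$ may be prescribed freely in $K(c)$, and the choice $D(c)=0$ produces a (necessarily unique) derivation on $K(c)$ extending $\d$ with $D(c)=0$.

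There is no genuine obstacle, the only point worth emphasising is the simultaneous vanishing of $f^\d$ and $f'$. In characteristic zero the vanishing fails (one has $f'(c)\neq 0$ along any separable extension, which is precisely what pins $D(c)$ down uniquely from $\d$); in characteristic $p$ with $b\in C_K$ the compatibility is automatic, and we instead exploit this freedom to impose the natural normalisation $D(c)=0$, which yields both existence and uniqueness.
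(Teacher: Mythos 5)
Your proof is correct, and it is essentially the standard argument behind the cited result: the paper itself offers no proof of this Fact, simply attributing it to Jacobson, and your derivation via the extension criterion $f^\d(c)+f'(c)D(c)=0$ for the minimal polynomial $f(x)=x^p-b$, with both terms vanishing (one because $\d(b)=0$, one because of characteristic $p$), is exactly the textbook route. One small caveat: in your ``trivial case'' $c\in K$ the statement as literally written can actually fail, since the only extension of $\d$ to $K(c)=K$ is $\d$ itself and nothing forces $\d(c)=0$ (e.g.\ $K=\mathbb F_p(t)$, $\d(t)=1$, $b=t^p$, $c=t$); so the Fact should be read with $b\notin K^p$, which is how it is applied throughout the paper, and your restriction to $c\notin K$ is therefore the right reading rather than a dismissal of a genuinely trivial case.
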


\begin{lemma}\label{reducebasis}
Let $A$ and $B$ be disjoint subsets of $K$. Assume that $A\cup B$ is a differential $p$-basis for $(K,\d)$. Then, there is a differential field extension $L$ of $K$ such that $A$ is a differential $p$-basis for $(L,\d)$.
\end{lemma}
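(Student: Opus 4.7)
The plan is to take $L$ to be the purely inseparable extension of $K$ obtained by adjoining all $p^n$-th roots of every $b\in B$, with the derivation extended so that each new root is a $\d$-constant. Concretely, set $L=K(b^{1/p^n}:b\in B,\ n\geq 1)$ and extend $\d$ stage by stage using Fact~\ref{extendder}: at stage $n$, each $b^{1/p^{n-1}}$ is already a constant of the current field (it lies in $C_K$ for $n=1$, and was put into the constants at stage $n-1$ for $n>1$), so the fact lets us adjoin the $p$-th root $b^{1/p^n}$ as a further constant. The directed union yields a derivation on $L$ satisfying $\d(b^{1/p^n})=0$ for every $b\in B$ and $n\geq 1$.

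Next I would verify that $A$ is a differential $p$-basis of $(L,\d)$. Clearly $A\subseteq C_K\subseteq C_L$. Since $\d$ annihilates every $b^{1/p^n}$, a direct computation in each finite-level subfield $L_n:=K(B^{1/p^n})$ (expanding $\ell\in L_n$ in the $K$-basis of monomials in the $b^{1/p^n}$) shows that $\d(\ell)=0$ forces all $K$-coefficients of $\ell$ into $C_K$. Taking unions gives $C_L=C_K(B^{1/p^\infty})$, and a parallel Frobenius calculation gives $L^p=K^p(B^{1/p^\infty})$. Because $A\cup B$ is a differential $p$-basis of $K$ we have $C_K=K^p(A,B)$, hence
$$
C_L \;=\; K^p(A, B, B^{1/p^\infty}) \;=\; K^p(A, B^{1/p^\infty}) \;=\; L^p(A),
$$
which shows the $p$-monomials of $A$ span $C_L$ over $L^p$.

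For the $L^p$-linear independence of these $p$-monomials, i.e.\ $[L^p(A):L^p]=p^{|A|}$, I would run a tower-degree calculation at each finite level: the $p$-independence of $A\cup B$ in $K$ over $K^p$ together with Lemma~\ref{propE1} gives $[K^p(A,B^{1/p^n}):K^p]=p^{|A|}\cdot[K^p(B^{1/p^n}):K^p]$, so that $[K^p(A,B^{1/p^n}):K^p(B^{1/p^n})]=p^{|A|}$. Passing to the direct limit (or, equivalently, applying Lemma~\ref{propE2} to the resulting linear disjointness of $K^p(A)$ and $K^p(B^{1/p^\infty})$ over $K^p$) yields $[L^p(A):L^p]=p^{|A|}$, confirming that $A$ is $p$-independent in $L$ over $L^p$.

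The principal obstacle is the infinite-dimensional bookkeeping: the derivation extension requires an iterated application of Fact~\ref{extendder}, and one must verify at every finite level that adjoining deeper $p^n$-th roots of $B$ does not destroy the $p$-independence of $A$ over the current $p$-th powers. Once these finite-level degree computations are in hand, the limit step is routine.
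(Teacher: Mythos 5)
Your construction is essentially the paper's: the paper reduces to $B=\{b\}$ (general $B$ by transfinite induction), adjoins successive $p$-th roots $b_1,b_2,\dots$ as constants via Fact~\ref{extendder}, checks at each stage that $A\cup\{b_{i+1}\}$ remains a differential $p$-basis, and takes the union --- which is exactly your field $K(B^{1/p^\infty})$ with the same derivation. Your limit-level verification is sound, though the multiplicativity $[K^p(A,B^{1/p^n}):K^p]=p^{|A|}\cdot[K^p(B^{1/p^n}):K^p]$ is precisely the linear disjointness that has to be proved (it does not follow from Lemma~\ref{propE1} alone, and citing Lemma~\ref{propE2} here is circular); it is true and follows from a routine monomial-basis expansion, which is the same computation the paper hides behind ``one can readily check,'' so you are at the same level of detail as the source.
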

\begin{proof}
We assume $B=\{b\}$ (the general case followed by a standard transfinite induction). Let $b_1$ be a $p$-th root of $b$. By Fact~\ref{extendder}, there is a unique extension of the derivation to $K_1=K(b_1)$ with $\d(b_1)=0$. One can readily check that $A\cup\{b_1\}$ is a differential $p$-basis for $(K_1,\d)$. Repeating this process, we set $b_{i+1}$ to be a $p$-th root of $b_i$ and extend the derivation to $K_i(b_{i+1})$ such that $\d(b_{i+1})=0$. Again, it follows that $A\cup \{b_{i+1}\}$ is a differential $p$-basis for $(K_{i+1},\d)$. 

Now let $L=\bigcup_{i}K_i$. Since $A$ is differentially $p$-independent in each $(K_i,\d)$, we get that $A$ is differentially $p$-independent in $L$. Furthermore, from the construction, we see that $A$ is a differentially $p$-spanning set of $(L,\d)$. In other words, $A$ is a differential $p$-basis for $(L,\d)$, as desired. 
\end{proof}

This lemma generalises the fact that any differential field can be extended to a differentially perfect field (see \cite[Theorem 4]{Wood73}). On the other hand, there is a natural way to increase the differential degree of imperfection by passing to a transcendental extension.

\begin{lemma}
Let $T$ be a family of indeterminates (namely, algebraically independent over $K$). Let $\d$ be the unique derivation on $K(T)$ that extends $\d$ on $K$ and $\d(t)=0$ for all $t\in T$. If $A$ is a differential $p$-basis for $(K,\d)$, then $A\cup T$ is a differential $p$-basis for $(K(T),\d)$.
\end{lemma}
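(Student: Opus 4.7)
The plan is to verify that $A \cup T$ meets the two defining conditions of a differential $p$-basis for $(K(T),\d)$: containment in $C_{K(T)}$, and that $m(A \cup T)$ is a basis of $C_{K(T)}$ over $K(T)^p$. The containment is immediate since $A \subseteq C_K \subseteq C_{K(T)}$ and $\d(t)=0$ for all $t \in T$. The essential preliminary step is the identification $C_{K(T)} = C_K(T)$ (together with the standard $K(T)^p = K^p(T^p)$). For the nontrivial containment, given $f \in C_{K(T)}$, write $f = g_0/h_0$ with $g_0,h_0 \in K[T]$ and rewrite $f = (g_0 h_0^{p-1})/h_0^p$. By Frobenius in characteristic $p$, $h_0^p \in K^p[T^p]$, and since the extension of $\d$ acts coefficient-wise on $K[T]$ and annihilates both $p$-th powers in $K$ and monomials in $T$, we have $\d(h_0^p)=0$. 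Hence $\d(f)=0$ forces $g_0 h_0^{p-1} \in C_K[T]$, and so $f \in C_K(T)$.

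It remains to show that $m(A \cup T)$ is a $K^p(T^p)$-basis of $C_K(T)$. For the spanning property, combine the hypothesis $C_K = \bigoplus_{\alpha \in m(A)} K^p\cdot \alpha$ with the standard $p$-basis decomposition of a polynomial ring $K^p[T] = \bigoplus_{\tau \in m(T)} K^p[T^p]\cdot \tau$ to obtain
$$C_K[T] \;=\; \bigoplus_{\alpha \in m(A),\, \tau \in m(T)} K^p[T^p]\cdot \alpha\tau.$$
By the previous paragraph, any $f \in C_K(T)$ can be written as $g/h^p$ with $g \in C_K[T]$ and $h \in K[T]$; dividing through by $h^p \in K^p[T^p]$ exhibits $f$ as a $K^p(T^p)$-linear combination of $m(A)\cdot m(T) = m(A \cup T)$.

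For linear independence, suppose $\sum_{\alpha,\tau} c_{\alpha\tau}\, \alpha\tau = 0$ with $c_{\alpha\tau} \in K^p(T^p)$, not all zero. Clearing denominators in $K^p[T^p]$, we may assume $c_{\alpha\tau} \in K^p[T^p]$. Expanding each $c_{\alpha\tau} = \sum_\sigma a_{\alpha\tau\sigma}\,\sigma$ with $a_{\alpha\tau\sigma} \in K^p$ and $\sigma$ a monomial in $T^p$, the relation becomes
$$\sum_M \Bigl(\sum_\alpha a_{\alpha,\tau_M,\sigma_M}\,\alpha\Bigr) M \;=\; 0,$$
where each monomial $M$ in $T$ decomposes uniquely as $M = \tau_M\sigma_M$ with $\tau_M \in m(T)$ and $\sigma_M$ a monomial in $T^p$. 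Algebraic independence of $T$ over $K$ forces each inner coefficient (an element of $C_K$) to vanish, and then the $p$-independence of $A$ over $K^p$ forces every $a_{\alpha,\tau_M,\sigma_M}=0$, contradicting nontriviality. The main obstacle is the identification $C_{K(T)} = C_K(T)$, which is secured by the $g/h^p$ trick; the rest of the argument is a direct unwinding of the $p$-basis decompositions of $C_K$ over $K^p$ (given) and of a polynomial ring over its $p$-th powers (standard).
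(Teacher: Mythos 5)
Your proof is correct, and it shares the paper's overall skeleton: both arguments recognize that the whole lemma reduces to the identity $C_{K(T)}=C_K(T)$. Where you differ is in how that identity is established. The paper reduces to a single indeterminate $t$, writes a constant as $p/q$ with $p,q$ coprime and $q$ monic, derives $p^\d q = p\, q^\d$, and concludes $p^\d=q^\d=0$ from coprimality together with $\deg q^\d<\deg q$. You instead use the $g/h^p$ normalization: multiplying through by $h_0^{p-1}$ makes the denominator a $p$-th power, hence killed by $\d$, so the condition $\d(f)=0$ passes directly to the numerator, where $\d$ acts coefficient-wise and the conclusion is immediate from $K$-linear independence of the monomials in $T$. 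Your route avoids the coprimality/monicity normalization and handles an arbitrary family $T$ in one pass rather than via the one-variable case; the paper's route is shorter on the page but leaves the reduction to $T=\{t\}$ and the $p$-independence of $A\cup\{t\}$ as "clear." You also spell out the spanning and independence of $m(A\cup T)$ explicitly through the direct-sum decompositions $C_K=\bigoplus_{\alpha\in m(A)}K^p\alpha$ and $K^p[T]=\bigoplus_{\tau\in m(T)}K^p[T^p]\tau$, which is exactly the content the paper compresses into the observation that $C_{K(T)}=C_K(T)$ suffices. Both arguments are sound; yours trades brevity for a uniform treatment of infinite $T$ and a fully explicit verification of both defining conditions of a differential $p$-basis.
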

\begin{proof}
It suffices to consider the case when $T=\{t\}$. Clearly $A\cup{t}$ is differentially $p$-independent in $(K(t),\d)$. It remains to show that it is also a differentially $p$-spanning set of $(K(t),\d)$. For this, it suffices to show that $C_{K(t)}=C_K(t)$. The containment $C_K(t)\subseteq C_{K(t)}$ is clear. Now assume $a\in C_{K(t)}\setminus\{0\}$. Then 
$$a=\frac{p}{q}\quad \text{ and }\quad \d(a)=0$$
where $p,q\in K[t]$ are nonzero. We may assume that $p$ and $q$ have no common factors and that $q$ is monic. From $\d(a)=0$ we get
$$\d(p)\, q =p\, \d(q).$$
Since $\d(t)=0$, one derives $\d(p)=p^\d$ where the latter is the polynomial in $K[t]$ obtained by applying $\d$ to the coefficients of $p$. Similarly, $\d(q)=q^\d$ and note that, since $q$ is monic, $\deg q^\d<\deg q$. The above display becomes
$$p^\d \, q=p\, q^\d.$$
Since $p$ and $q$ are coprime and $\deg q^\d<\deg q$, the only way this equality can occur is if $p^\d=0$ and $q^\d=0$. In other words, $p,q\in C_K[t]$, and so $a\in C_K(t)$, as claimed.
\end{proof}

We now note that given a differential field extension $(L,\d)/(K,\d)$ with $L/K$ separable, if $A$ a differential $p$-basis for $(K,\d)$, then $A$ is a differential $p$-basis for $(L,\d)$ if and only if $C_L=L^p\cdot C_K$. Thus, the condition on when the constants of a separable extension change \emph{as little as possible} becomes relevant for building extensions preserving a differential $p$-basis. The following lemma says that this is the case when the extension is generated by a constrained tuple (refer to Section~\ref{diffpreli} for properties of constrained tuples; in particular, Remark~\ref{better}). 
 
\begin{lemma}\label{preserveconstants}
Let $\alpha$ be a constrained tuple over $K$. If $A$ is a differential $p$-basis for $(K,\d)$, then it is also a differential $p$-basis for $(K\langle \alpha\rangle, \d)$.
\end{lemma}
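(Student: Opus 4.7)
The plan is to verify directly the two defining properties of a differential $p$-basis for $(K\langle\alpha\rangle,\d)$: first that $m(A)$ is $K\langle\alpha\rangle^p$-linearly independent (together with $A\subseteq C_{K\langle\alpha\rangle}$), and second that $m(A)$ spans $C_{K\langle\alpha\rangle}$ over $K\langle\alpha\rangle^p$. The containment $A\subseteq C_K\subseteq C_{K\langle\alpha\rangle}$ is automatic.

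For the spanning condition, I would invoke Remark~\ref{better}: because $\alpha$ is constrained, $C_{K\langle\alpha\rangle}=K\langle\alpha\rangle^p\cdot C_K$. Since $m(A)$ already spans $C_K$ over $K^p$ and $K^p\subseteq K\langle\alpha\rangle^p$, the $p$-monomials $m(A)$ span the compositum $K\langle\alpha\rangle^p\cdot C_K$ over $K\langle\alpha\rangle^p$. This gives the spanning part at once.

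For the independence part, the strategy is to apply Lemma~\ref{propE2} with the intermediate field $E=C_K$ of $K/K^p$ and the field extension $L=K\langle\alpha\rangle/K$. What must be checked is condition (i) of that lemma, namely that $C_K$ and $K\langle\alpha\rangle^p$ are linearly disjoint over $K^p$. This follows from the hypothesis that $\alpha$ is constrained: by definition $K\langle\alpha\rangle/K$ is a separable field extension, so $K$ is linearly disjoint from $K\langle\alpha\rangle^p$ over $K^p$, and passing to the subfield $C_K\subseteq K$ preserves linear disjointness over $K^p$. The equivalence (i)$\Leftrightarrow$(ii) in Lemma~\ref{propE2} then lifts the $p$-independence of $m(A)$ in $C_K$ over $K^p$ (which is part of the assumption that $A$ is a differential $p$-basis of $(K,\d)$) to $p$-independence of $m(A)$ in $K\langle\alpha\rangle$ over $K\langle\alpha\rangle^p$.

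There is no real obstacle in this argument; the entire content of the lemma sits inside the two prior inputs \emph{(a)} Remark~\ref{better}, which says that the constants grow only by what is algebraically forced, and \emph{(b)} the general principle that linear disjointness over $K^p$ is inherited by subfields between $K^p$ and $K$. The only place one must be a little careful is verifying condition (i) of Lemma~\ref{propE2} rather than just restating separability, since $C_K$ is \emph{not} equal to $K$ in general; but the reduction to the separability of $K\langle\alpha\rangle/K$ is immediate as indicated.
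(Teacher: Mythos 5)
Your proof is correct and follows essentially the same route as the paper: the paper's proof is a one-line appeal to the observation stated just before the lemma (that for a separable extension a differential $p$-basis of $(K,\d)$ remains one for $(L,\d)$ iff $C_L=L^p\cdot C_K$) together with Remark~\ref{better}, and your argument simply unpacks that observation, deriving the independence half from Lemma~\ref{propE2} with $E=C_K$ and the spanning half from Remark~\ref{better}. The details you supply (inheritance of linear disjointness by the subfield $C_K\subseteq K$) are exactly what underlies Proposition~\ref{characsep}, which is the paper's implicit justification.
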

\begin{proof}
Since $K\langle \alpha\rangle/K$ is separable (by definition of constrained tuple), by the above observations it suffices to note that $C_{K\langle \alpha\rangle}=K\langle \alpha\rangle^p\cdot C_K$. But this is precisely the content of Remark~\ref{better}.
\end{proof}

As a consequence, we obtain the expected result on separably algebraic extensions.

\begin{corollary}
If $L$ is a differential extension of $K$ with $L/K$ separably algebraic, then any differential $p$-basis of $(K,\d)$ is a differential $p$-basis for $(L,\d)$. (In particular, the separable closure of $K$ has the same differential degree of imperfection as $K$).
\end{corollary}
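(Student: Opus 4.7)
The plan is to reduce to a single-element extension and then invoke Lemma~\ref{preserveconstants}. The two conditions defining a differential $p$-basis for $(L,\d)$, namely differential $p$-independence and $p$-spanning of $C_L$ over $L^p$, are both of finite character: any potential witness (a linear relation, or an element of $C_L$ to be represented) involves only finitely many elements of $L$. Since $L/K$ is separably algebraic, every finitely generated subextension of $L/K$ has the form $K(\alpha)$ for a single $\alpha\in L$ separably algebraic over $K$, by the classical primitive element theorem. Moreover $K(\alpha)=K\langle\alpha\rangle$: writing $f\in K[x]$ for the minimal polynomial of $\alpha$ and differentiating $f(\alpha)=0$ gives $\d\alpha=-f^\d(\alpha)/f'(\alpha)\in K(\alpha)$, and induction places every $\d^j\alpha$ inside $K(\alpha)$. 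So it suffices to handle the case $L=K\langle\alpha\rangle=K(\alpha)$ for a single separably algebraic $\alpha$.

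For this case, the key step is to verify that $\alpha$ is constrained over $K$. Viewed in $K\{x\}$, the minimal polynomial $f$ has $\ord f=0$, $s_f=f'$, and $s_f(\alpha)\neq 0$ by separability. Moreover $f$ is of minimal rank in the separable prime differential ideal $I_\d(\alpha/K)$, since any element of lower rank would be a nonzero polynomial in $K[x]$ of degree less than $\deg f$ vanishing at $\alpha$. Hence Theorem~\ref{sepideal}(1) yields $I_\d(\alpha/K)=[f]:s_f^\infty$. Now let $\beta$ be any differential specialisation of $\alpha$ over $K$ with $K\langle\beta\rangle/K$ separable and $s_f(\beta)\neq 0$. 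Then $I_\d(\beta/K)$ is a proper separable prime differential ideal containing $f$; since $\ord f=0$, the hypothesis on elements of smaller order in Corollary~\ref{isotype}(1) is automatic for proper ideals, so $I_\d(\beta/K)=[f]:s_f^\infty=I_\d(\alpha/K)$. Thus $\beta$ is a generic differential specialisation of $\alpha$, which means $s_f$ is a constraint for $\alpha$ over $K$.

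With $\alpha$ constrained over $K$, Lemma~\ref{preserveconstants} shows that $A$ remains a differential $p$-basis for $(K\langle\alpha\rangle,\d)=(K(\alpha),\d)$, and together with the finite-character reduction of the first paragraph this completes the proof. The parenthetical remark then follows at once by taking $L$ to be a separable algebraic closure of $K$. I do not foresee any real obstacle; the only point requiring care is to ensure that Corollary~\ref{isotype}(1) applies in the $\ord f=0$ case, which comes down to the trivial observation that no proper prime ideal of $K\{x\}$ can contain a nonzero constant.
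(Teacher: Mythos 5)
Your proof is correct and follows essentially the same route as the paper: reduce to a finitely generated subextension, observe that the generator is constrained over $K$, and apply Lemma~\ref{preserveconstants}. The only difference is that you verify in detail (via the primitive element theorem, the minimal polynomial, and Corollary~\ref{isotype}(1)) the fact that a separably algebraic generator is constrained, a fact the paper simply asserts.
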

\begin{proof}
Again it suffices to show that $C_L=L^p\cdot C_K$. We may assume that $L$ is finitely generated, say by a tuple $\alpha$. But now $\alpha$ is constrained over $K$, and so we can apply Lemma~\ref{preserveconstants}.
\end{proof}

We conclude this section by pointing out that, just as in the algebraic case, separability for differential field extensions is equivalent to preserving differential $p$-independent sets.

\begin{proposition}\label{characsep}
Let $(L,\d)/(K,\d)$ be a differential field extension. Then, the following are equivalent
\begin{enumerate}
\item $L/K$ is separable
\item $C_K$ and $L^p$ are linearly disjoint over $K^p$
\item Every differentially $p$-independent set of $(K,\d)$ is also differentially $p$-independent for $(L,\d)$
\item There exists a differential $p$-basis of $(K,\d)$ which is differentially $p$-independent for $(L,\d)$
\end{enumerate}
\end{proposition}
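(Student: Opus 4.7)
The plan is to establish the chain of implications $(1)\Rightarrow(3)\Rightarrow(4)\Rightarrow(2)\Rightarrow(1)$, of which three should follow almost immediately from Lemma~\ref{propE2} applied with $E = C_K$. The key observation will be that a differentially $p$-independent subset of $(K,\d)$ is, by definition, a $p$-independent subset of $C_K$ over $K^p$, and a differential $p$-basis of $(K,\d)$ is a $p$-basis of $C_K$ over $K^p$. Since $L/K$ is a differential extension we have $C_K \subseteq C_L$, so for a subset $A \subseteq C_K$ the condition of being differentially $p$-independent in $(L,\d)$ reduces to $A$ being $p$-independent in $L$ over $L^p$. With these observations in place, Lemma~\ref{propE2} immediately yields $(2)\Leftrightarrow(3)\Leftrightarrow(4)$. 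The implication $(1)\Rightarrow(2)$ will then be immediate by restricting linear disjointness of $K$ and $L^p$ over $K^p$ to the subfield $C_K \subseteq K$.

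The main content is $(2)\Rightarrow(1)$. Using $(4)$, I will fix a differential $p$-basis $A$ of $(K,\d)$ that is $p$-independent in $L/L^p$, and extend it to a $p$-basis $A \cup B$ of $K/K^p$ with $B \subseteq K \setminus C_K$. The goal is to show $A \cup B$ is $p$-independent in $L/L^p$; by Lemma~\ref{propE2} applied with $E = K$ this will yield $(1)$. By the finite character of $p$-independence it suffices to treat $B = \{b_1,\dots,b_n\}$ finite, and I will proceed by induction on $n$. Writing $K^{(n)} = K^p(A, b_1,\dots,b_n)$ and $L^{(n)} = L^p \cdot K^{(n)}$, so that $L^{(n)} \cong L^p \otimes_{K^p} K^{(n)}$ by the inductive hypothesis, the inductive step reduces to showing $b_{n+1} \notin L^{(n)}$.

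The base case $n = 0$ is where the differential structure enters decisively. Here $L^{(0)} = L^p \cdot C_K$; since $\d$ vanishes on $L^p$ (because $\d(\ell^p) = 0$ for all $\ell \in L$) and on $C_K$, we have $L^{(0)} \subseteq C_L$. Consequently, if $b_1 \in L^{(0)}$ then $b_1 \in K \cap C_L = C_K$ (using that $L/K$ is a differential extension, so the derivation on $L$ restricts to that on $K$), contradicting $b_1 \in K \setminus C_K$. The main obstacle I expect is the inductive step for $n \geq 1$, since $L^{(n)}$ is no longer contained in $C_L$; the plan is to assume $b_{n+1} \in L^{(n)}$, invoke the inductive hypothesis to write $b_{n+1} = \sum_\beta \ell_\beta^p \beta$ uniquely with $\ell_\beta \in L$ and $\beta \in m(A \cup \{b_1,\dots,b_n\})$, and then apply $\d$ iteratively, exploiting $\d|_{L^p} = \d|_{C_K} = 0$, to force the $\ell_\beta^p$ to lie in $K^p$, which will contradict $b_{n+1} \notin K^{(n)}$ (a consequence of the $p$-independence of $A \cup B$ in $K/K^p$).
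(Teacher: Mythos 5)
Your reduction of $(2)\Leftrightarrow(3)\Leftrightarrow(4)$ to Lemma~\ref{propE2} with $E=C_K$, and your derivation of $(1)\Rightarrow(2)$ by restricting linear disjointness from $K$ to the subfield $C_K$, are exactly what the paper does. The difference is that the paper disposes of $(1)\Leftrightarrow(2)$ entirely by citing Kolchin (Proposition 1, \S II.2 of his book), whereas you attempt a direct proof of $(2)\Rightarrow(1)$. Your base case $b_1\notin L^p\cdot C_K$ is correct, but the inductive step for $n\ge 1$ has a genuine gap.

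The problem is the step ``apply $\d$ iteratively \dots to force the $\ell_\beta^p$ to lie in $K^p$.'' Differentiating $b_{n+1}=\sum_\beta \ell_\beta^p\,\beta$ gives $\d^j b_{n+1}=\sum_\beta \ell_\beta^p\,\d^j\beta$, but the elements $\d^j\beta$ do not lie in the $L^p$-span of the monomials $m(A\cup\{b_1,\dots,b_n\})$ (for instance $\d\beta=\beta\cdot\sum_j k_j\,\d b_j/b_j$), so no comparison of coefficients is available; and viewed as a $C_L$-linear relation the representation of $b_{n+1}$ is far from unique, since the monomials $m(A)$ are themselves $C_L$-linearly dependent (they all lie in $C_K\subseteq C_L$). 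Hence nothing forces the individual $\ell_\beta^p$ into $K$, let alone into $K^p$. The missing ingredient is the classical fact that $K$ and $C_L$ are linearly disjoint over $C_K$. One proves that elements of $C_L$ linearly independent over $C_K$ remain linearly independent over $K$ by taking a relation $\sum a_ic_i=0$ with $a_i\in K$, $c_i\in C_L$, of minimal support, normalising $a_1=1$, and applying $\d$ --- here differentiation really does shrink the support, because the constants play the role of the vectors and the field elements the role of the coefficients --- and then one invokes the symmetry of linear disjointness. Granting this fact, your inductive step goes through after regrouping: write $\beta=\alpha\gamma$ with $\alpha\in m(A)$ and $\gamma\in m(b_1,\dots,b_n)$, so that $L^{(n)}=L^p\cdot C_K(b_1,\dots,b_n)\subseteq C_L(b_1,\dots,b_n)$; since $m(b_1,\dots,b_{n+1})$ is linearly independent over $C_K=K^p(m(A))$, it stays independent over $C_L$, whence $b_{n+1}\notin C_L(b_1,\dots,b_n)\supseteq L^{(n)}$. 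This linear-disjointness-of-constants statement is precisely the content of the Kolchin proposition the paper cites for $(1)\Leftrightarrow(2)$, so you must either prove it or cite it; as written, your sketch silently assumes it.
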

\begin{proof}
The equivalence (1)$\Leftrightarrow$(2) is the content of a result of Kolchin in \cite[Proposition 1, \S II.2]{Kolbook}.

The equivalences (2)$\Leftrightarrow$(3)$\Leftrightarrow$(4) follow from Lemma~\ref{propE2} by taking $E=C_K$ (as the intermediate field of $K$ and $K^p$).
\end{proof}

\subsection{Constrained constructions}\label{constructions}
In this section, we provide a way to construct separably differentially closed field extensions that preserve differential $p$-bases. As a result, this yields that $\SDCFe$ is a model consistent extension of DF$_{p,\epsilon}$ (namely, every model of DF$_{p,\epsilon}$ can be extended to a model of $\SDCFe$).

\medskip

Let $(K,\d)$ be a differential field of characteristic $p>0$. We construct a separably differentially closed field $(L,\d)$ extending $(K,\d)$ such that $L/K$ is separable and $C_L=L^p\cdot C_K$ (as a result any differential $p$-basis of $K$ will be one for $L$). The construction is as follows: Fix an enumeration of the pairs $(f_i,g_i)_{i\in I}$ where $f_i,g_i\in K\{x\}$ are nonzero, $s_{f_i}\neq 0$, and $\ord f_i>\ord g_i$. We may assume that $f_0$ is irreducible in $K\{x\}$. We can then find a solution $a$ of the system $(f_0=0) \land (g_0\neq 0)$ such that $K\langle a\rangle/K$ is a separable extension. Indeed, since $s_{f_0}\neq 0$, by Theorem~\ref{sepideal} the ideal $P=[f_0]:s_{f_0}^\infty$ of $K\{x\}$ is a separable prime differential ideal and $a=x+P$, from the fraction field of $K\{x\}/P$, is such a solution. By Lemma~\ref{specialise}, there exists a differential specialisation $\alpha$ of $a$ (over $K$) with $\alpha$ constrained over $K$ with constraint $g$. Thus, $f(\alpha)=0$ and $g(\alpha)\neq 0$. By Remark~\ref{better}, $C_{K\langle \alpha\rangle}=K\langle \alpha\rangle^p\cdot C_K$. Set $K_1:=K\langle\alpha\rangle$. Then $K_1$ is separable over $K$ and $C_{K_1}=K_1^p\cdot C_{K}$. By iterating this process (taking unions for limit ordinals), one builds a differential field $(E_1,\d)$ extension of $K$ such that $E_1/K$ is separable, $C_{E_1}=E_1^p\cdot C_K$, and $E_1$ solves every system $(f_i=0) \land (g_i\neq 0)$ for $i\in I$. Now repeat this construction to build $(E_{i+1},\d)$ from $(E_i,\d)$. Then, setting 
$$L=\bigcup_{i=1}^{\infty}E_i$$
we get a separably differentially closed field $(L,\d)$ extension of $K$ with $L/K$ separable and $C_L=L^p\cdot C_K$, as claimed. Any construction performed in the above fashion will be called a \emph{constrained construction over $K$}.

\smallskip

To summarise, the above construction yields:

\begin{proposition}\label{modelconsistent}
Let $(K,\d)$ be a differential field with differential $p$-basis $A$. Then, there exists a differential extension $L\models \SDCF$ having $A$ as a differential $p$-basis as well. 
\end{proposition}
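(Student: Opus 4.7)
The plan is to run the \emph{constrained construction} described in the paragraphs preceding the statement, but to track the differential $p$-basis $A$ along the way. The key ingredient that lets the construction preserve $A$ is Lemma~\ref{preserveconstants}: any constrained tuple $\alpha$ over the current base field $K'$ yields an extension $K'\langle\alpha\rangle$ with $C_{K'\langle\alpha\rangle}=K'\langle\alpha\rangle^p\cdot C_{K'}$, so that any differential $p$-basis of $K'$ remains one for $K'\langle\alpha\rangle$.

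For the single step I would start with an enumeration of all pairs $(f_i,g_i)$ of nonzero elements of $K'\{x\}$ satisfying $s_{f_i}\neq 0$ and $\ord g_i<\ord f_i$. After reducing to the case where $f_i$ is irreducible (by extracting an irreducible factor of maximal order, which must still have nonzero separant), Theorem~\ref{sepideal}(2) says $P=[f_i]:s_{f_i}^\infty$ is a separable prime differential ideal, so $a=x+P$ in $\operatorname{Frac}(K'\{x\}/P)$ is a separable differential extension realising $f_i(a)=0\land g_i(a)\neq 0$. Then Lemma~\ref{specialise} provides a differential specialisation $\alpha$ of $a$ which is constrained over $K'$ with constraint $g_i$, and by Lemma~\ref{preserveconstants} the set $A$ remains a differential $p$-basis of $K'\langle\alpha\rangle$. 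Iterating this single step through the enumeration (and through all such pairs that appear in the intermediate fields), I produce a countable chain $K=E_0\subseteq E_1\subseteq E_2\subseteq\cdots$ in which every pair $(f,g)$ with $f,g\in E_i\{x\}$ of the above form is solved in $E_{i+1}$.

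At limit/union stages I would check that $A$ is still a differential $p$-basis. We have $A\subseteq C_{E_i}$ for all $i$, hence $A\subseteq C_L$ where $L=\bigcup_i E_i$. Any nontrivial $K^p$-linear relation among $p$-monomials of $A$ over $L^p$ would involve only finitely many elements and thus lie in some $E_i^p$, contradicting $p$-independence in $E_i$; and $C_L=\bigcup_i C_{E_i}\subseteq\bigcup_i\sum_{m\in m(A)}E_i^p\cdot m\subseteq \sum_{m\in m(A)}L^p\cdot m$, which gives $p$-spanning. So $A$ is a differential $p$-basis of $(L,\d)$. Finally, any pair $(f,g)\in L\{x\}^2$ with $s_f\neq 0$ and $\ord g<\ord f$ has all coefficients in some $E_i$, hence is solved in $E_{i+1}\subseteq L$; by Theorem~\ref{foraxioms}, $L\models \SDCF$.

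There is no serious obstacle: the single-step case is handled by Theorem~\ref{sepideal}, Lemma~\ref{specialise}, and Lemma~\ref{preserveconstants}, while the limit-stage preservation of the differential $p$-basis comes down to the observation that $p$-independence and $p$-spanning of $A$ over $K'^p$ for $C_{K'}$ are both finitary/pointwise conditions compatible with unions of chains. The only care needed is in arranging the enumeration so that every such pair over every intermediate field is eventually dealt with, which is the same bookkeeping used in the standard constrained construction above.
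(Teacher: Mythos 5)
Your proposal is correct and follows essentially the same route as the paper: the paper proves this proposition precisely via the ``constrained construction'' of Section~\ref{constructions}, using Theorem~\ref{sepideal}(2) to produce a separable solution, Lemma~\ref{specialise} to replace it by a constrained tuple, and Remark~\ref{better} (equivalently, Lemma~\ref{preserveconstants}) to see that the constants, and hence the differential $p$-basis $A$, are preserved at each step and through the unions. Your explicit verification that $p$-independence and $p$-spanning survive the limit stages is a detail the paper leaves implicit, but it is the same argument.
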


We now prove that constrained constructions can be embedded into any separably differentially closed extension.

\begin{theorem}
Let $(L,\d)$ be a constrained construction over $K$. Then, for any separably differentially closed extension $(F,\d)$ of $K$, there exists a differential embedding $\phi:L\hookrightarrow F$ over $K$.
\end{theorem}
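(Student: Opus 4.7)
The proof will go by transfinite induction along the structure of the constrained construction. First I would flatten the two-stage enumeration defining $L$ into a single continuous chain of differential subfields $(L_\beta)_{\beta<\mu}$ with $L_0=K$, $L_\lambda=\bigcup_{\beta<\lambda}L_\beta$ at limit stages, $L=\bigcup_{\beta<\mu}L_\beta$, and $L_{\beta+1}=L_\beta\langle\alpha_\beta\rangle$ at successor stages, where $\alpha_\beta\in L_{\beta+1}$ is a single constrained element over $L_\beta$ supplied by the construction. I will then recursively build a compatible tower of differential $K$-embeddings $\phi_\beta:L_\beta\hookrightarrow F$, starting with $\phi_0=\id_K$ and taking $\phi_\lambda=\bigcup_{\beta<\lambda}\phi_\beta$ at limit ordinals. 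The union $\phi=\bigcup_{\beta<\mu}\phi_\beta$ is then the desired embedding.

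The real content lies in the successor step. Given $\phi_\beta:L_\beta\hookrightarrow F$, since $\alpha_\beta$ is constrained over $L_\beta$, Corollary~\ref{isotype}(2) furnishes $f_\beta,g_\beta\in L_\beta\{x\}$ with $\ord g_\beta<\ord f_\beta$ whose conjunction $f_\beta(x)=0\wedge g_\beta(x)\neq 0$ determines the differential $L_\beta$-isomorphism type of $\alpha_\beta$; by Theorem~\ref{sepideal} one may take $f_\beta$ irreducible with $s_{f_\beta}\neq 0$, so that $I_\d(\alpha_\beta/L_\beta)=[f_\beta]:s_{f_\beta}^\infty$. Applying the differential field isomorphism $\phi_\beta$ coefficient-wise yields $\phi_\beta(f_\beta),\phi_\beta(g_\beta)\in F\{x\}$ with $s_{\phi_\beta(f_\beta)}\neq 0$ and $\ord\phi_\beta(g_\beta)<\ord\phi_\beta(f_\beta)$. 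Since $F\models\SDCF$, Theorem~\ref{foraxioms} supplies $b\in F$ with $\phi_\beta(f_\beta)(b)=0$ and $\phi_\beta(g_\beta)(b)\neq 0$.

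To close the induction I would transport Corollary~\ref{isotype}(1) across $\phi_\beta$: the ideal $I_\d(b/\phi_\beta(L_\beta))$ is a prime differential ideal of $\phi_\beta(L_\beta)\{x\}$ containing the irreducible $\phi_\beta(f_\beta)$ (with nonzero separant) and missing $\phi_\beta(g_\beta)$, which, by the uniqueness in Corollary~\ref{isotype}(1), forces $I_\d(b/\phi_\beta(L_\beta))=[\phi_\beta(f_\beta)]:s_{\phi_\beta(f_\beta)}^\infty=\phi_\beta\bigl(I_\d(\alpha_\beta/L_\beta)\bigr)$. Hence $b$ realises the $\phi_\beta$-image of the differential isomorphism type of $\alpha_\beta$ over $L_\beta$, and $\phi_\beta$ extends uniquely to a differential field embedding $\phi_{\beta+1}:L_\beta\langle\alpha_\beta\rangle\hookrightarrow F$ sending $\alpha_\beta\mapsto b$.

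The hard part is really just this successor step, and it is entirely defused by Corollary~\ref{isotype}: part~(2) produces the first-order system isolating the isotype of $\alpha_\beta$, while part~(1) certifies that any $F$-realisation of the transported system carries the transported isotype. Once these are in hand, the $\SDCF$-axioms enjoyed by $F$ provide a witness at every successor stage, and limit stages present no issue since differential field embeddings compose under direct limits.
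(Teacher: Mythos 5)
Your proof is correct and follows essentially the same route as the paper: isolate the differential isomorphism type of each constrained generator via Corollary~\ref{isotype}(2), realise the transported system $f=0\land g\neq 0$ in $F$ using the $\SDCF$ axioms, extend the embedding, and iterate (the paper carries out one step and asserts the process iterates; you merely make the transfinite induction explicit). One small attribution point: the identification $I_\d(b/\phi_\beta(L_\beta))=\phi_\beta\bigl(I_\d(\alpha_\beta/L_\beta)\bigr)$ should rest on the ``determined by'' clause of Corollary~\ref{isotype}(2) transported along $\phi_\beta$ rather than on the uniqueness in part~(1), since the latter requires the ideal to omit \emph{every} polynomial of order less than $\ord f_\beta$, not just the single witness $\phi_\beta(g_\beta)$.
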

\begin{proof}
We use the notation from the constrained construction of $L$ above. Recall that $K_1=K\langle\alpha\rangle$ where $\alpha$ is constrained over $K$. By Corollary~\ref{isotype}(2), there are $f,g\in K\{x\}$ with $\ord g<\ord f$ such that the differential isomorphism type of $\alpha$ over $K$ is determined by $f(x)=0\land g(x)\neq 0$. Since $(F,\d)$ is separably differentially closed, there exists $\beta\in F$ with $f(\beta)=0$ and $g(\beta)\neq 0$. This yields that $I_\d(\alpha/K)=I_\d(\beta/K)$, and thus there is a differential $K$-isomorphism $K_1\cong K\langle\beta\rangle$ mapping $\alpha\mapsto \beta$. This of course induces a differential $K$-embedding $K_1\hookrightarrow F$. One can readily check that this process can be iterated in the constrained construction of $L$ to yield the desired differential $K$-embedding $L\hookrightarrow F$.
\end{proof}

The following is an immediate consequence of the above theorem.

\begin{corollary}\label{primemodel4}
Let $(K_1,\d_1)$ and $(K_2,\d_2)$ be differential fields and $\sigma:K_1\to K_2$ a differential isomorphism. If $(L,\d_1)$ is a constrained construction over $K_1$ and $(F,\d_2)$ is a separably differentially closed field extension of $K_2$, then there is an extension of $\sigma$ to differential embedding $\sigma':L\hookrightarrow F$.
\end{corollary}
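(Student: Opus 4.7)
The plan is to reduce immediately to the preceding theorem by using $\sigma$ to transport the base. Let $\iota \colon K_2 \hookrightarrow F$ be the inclusion, and view $F$ as a differential field extension of $K_1$ along the differential embedding $\iota \circ \sigma \colon K_1 \hookrightarrow F$. Being separably differentially closed is a first-order property of $F$ as an $\Ld$-structure and does not depend on the choice of base, so $F$ is an $\SDCF$ extension of the identified copy of $K_1$. The preceding theorem then supplies a differential embedding $\sigma' \colon L \hookrightarrow F$ over $K_1$, whose restriction to $K_1$ is $\iota \circ \sigma$; that is, $\sigma'$ extends $\sigma$, which is the required conclusion.

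Alternatively, one can mimic the proof of the preceding theorem step by step, carrying $\sigma$ through the induction. At each stage of the constrained construction one adjoins an element $\alpha$ constrained over the current base; by Corollary~\ref{isotype}(2) there exist $f, g$ in the current base with $\ord g < \ord f$ and $s_f \neq 0$ such that the differential isomorphism type of $\alpha$ is determined by $f(x) = 0 \land g(x) \neq 0$. Applying the (already extended) $\sigma$ to coefficients gives polynomials $f^\sigma, g^\sigma$ over the image base inside $F$; since $F \models \SDCF$, Theorem~\ref{foraxioms} produces $\beta \in F$ with $f^\sigma(\beta) = 0$ and $g^\sigma(\beta) \neq 0$. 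Corollary~\ref{isotype}(1) (uniqueness of the isotype) then yields a differential isomorphism of the enriched fields extending $\sigma$. A standard transfinite iteration, taking unions at limit stages of the constrained construction of $L$, delivers $\sigma'$.

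The only substantive point to observe is that the constrained construction of $L$ is carried out intrinsically in the differential field $K_1$ and does not privilege any particular embedding into $F$; the isomorphism $\sigma$ therefore serves merely as a relabelling of the base. Consequently there is no genuine mathematical obstacle here beyond this bookkeeping, and the argument goes through as in the previous theorem.
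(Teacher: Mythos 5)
Your proposal is correct and takes essentially the same approach as the paper, which itself only remarks that the proof of the preceding theorem ``can be adapted''; your transport-of-structure reduction (identifying $K_1$ with its image $\sigma(K_1)=K_2\subseteq F$ and applying the theorem directly) and your step-by-step adaptation carrying $\sigma$ through the construction are both valid realisations of that remark. Nothing further is needed.
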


\smallskip

\subsection{Completeness of $\SDCF_{p,\epsilon}$ for finite $\epsilon$}\label{compfinite}

In this subsection we assume that $\epsilon\in \mathbb N_0$ (i.e., $\epsilon$ is finite). While the theory SDCF$_{p,0}=$DCF$_p$ is model-complete \cite[Theorem 7]{Wood73}, we observe that, for $\epsilon>0$, the theory $\SDCFe$ is \emph{not} model-complete. Indeed, let $(K,\d)\models \SDCFe$ have differential $p$-basis $\bar a=(a_1,a_2,\dots,a_\epsilon)$. Let $c$ be a p-th root of $a_\epsilon$. By Fact~\ref{extendder}, there is a derivation $\d$ on $K(c)$ extending that on $K$ with $\d(c)=0$. Then, $(a_1,a_2,\dots,c)$ is a differential $p$-basis for $(K(c),\d)$. By Proposition~\ref{modelconsistent}, there is a separably differentially closed field $(L,\d)$ extending $(K(c),\d)$ having differential $p$-basis $(a_1,a_2,\dots,c)$; in particular, $(L,\d)\models \SDCFe$. Thus, we have two models $(K,\d)\subset (L,\d)$ of $\SDCFe$ where $\exists x\, (x^p=a_\epsilon)$ holds in $L$ but not in $K$. Hence, $\SDCFe$ is not model-complete (in the language of differential fields $\Ld$). 

Similar to the algebraic case, we obtain a model-completeness result after expanding the language by a differential $p$-basis. Namely, we expand the language $\Ld$ by constant symbols $\bar a=(a_1,\dots,a_\epsilon)$, denoted $\La$, where in a model of DF$_{p,\epsilon}$ they are to be interpreted as a differential $p$-basis. When $\epsilon=0$ we set $\bar a$ to be an empty tuple. We denote by DF$_{p,\epsilon}^{\bar a}$ the $\La$-theory consisting of DF$_{p,\epsilon}$ together with axioms specifying that $\bar a$ is a differential $p$-basis, and similarly for $\SDCFa$. To avoid confusion, we will write $K\subseteq_{\La}L$ to mean that this is an extension of $\La$-structures.

Our next result says that an $\La$-extension of models of DF$_{p,\epsilon}^{\bar a}$ is a separable extension (as fields).

\begin{lemma}\label{sepext}\label{sepextst}
Let $K$ and $L$ be models of DF$_{p,\epsilon}^{\bar a}$ with $K\subseteq_{\La}L$. Then, $L$ is separable over $K$.
\end{lemma}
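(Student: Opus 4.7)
The plan is to read the conclusion directly off Proposition~\ref{characsep}. The key point is that the constants $\bar a$ are, by assumption, interpreted as a differential $p$-basis in both $K$ and $L$, so the same finite tuple witnesses condition (4) of that proposition for the extension $L/K$.

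Concretely, I would proceed as follows. First, since $K\models \text{DF}_{p,\epsilon}^{\bar a}$, the tuple $\bar a$ is a differential $p$-basis of $(K,\d)$; in particular the $p$-monomials $m(\bar a)$ form a $K^p$-basis of $C_K$. Second, since $L\models \text{DF}_{p,\epsilon}^{\bar a}$, the same tuple $\bar a$ is a differential $p$-basis of $(L,\d)$; by definition this says $\bar a\subseteq C_L$ and the $p$-monomials $m(\bar a)$ are linearly independent over $L^p$. Thus $\bar a$ is differentially $p$-independent in $(L,\d)$ in the sense of the paper, and so in particular it is a differential $p$-basis of $(K,\d)$ that is differentially $p$-independent in $(L,\d)$. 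This is exactly the hypothesis of condition~(4) of Proposition~\ref{characsep} (with the intermediate field $E=C_K$ in the underlying Lemma~\ref{propE2}), and invoking (4)$\Rightarrow$(1) there yields that $L/K$ is separable.

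There really isn't a serious obstacle here; the statement is essentially a bookkeeping check that the axioms of $\text{DF}_{p,\epsilon}^{\bar a}$ give us, for free, the data needed for the easiest-to-verify characterisation of separability in Proposition~\ref{characsep}. The only small thing to be careful about is that $\bar a$ lies in $C_L$ (needed so the phrase ``differentially $p$-independent in $L$'' is meaningful), but this is immediate because $K\subseteq_{\La} L$ means in particular that the derivations agree on $K$, hence $C_K\subseteq C_L$ and so $\bar a\subseteq C_K\subseteq C_L$.
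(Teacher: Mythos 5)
Your proposal is correct and is essentially the paper's own proof: the paper likewise observes that $\bar a$ is a common differential $p$-basis and then cites Proposition~\ref{characsep} (implication (4)$\Rightarrow$(1)) to conclude separability. Your additional bookkeeping about $\bar a\subseteq C_K\subseteq C_L$ is a harmless elaboration of the same one-line argument.
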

\begin{proof}
Since $\bar a$ is a common differential $p$-basis, separability follows from Proposition~\ref{characsep}.
\end{proof}

Now model-completeness follows.

\begin{proposition}\label{modelcompletea}
The theory $\SDCFa$ is model-complete.
\end{proposition}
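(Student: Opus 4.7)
The plan is to verify Robinson's test: show that whenever $K, L \models \SDCFa$ with $K \subseteq_{\La} L$, the structure $K$ is existentially closed in $L$ in the language $\La$. Once this is established, model-completeness of $\SDCFa$ follows immediately.

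The first step is to pass from $\La$-embeddings to plain differential-field extensions with the right separability property. Since $K \subseteq_{\La} L$, both models interpret the constant symbols $\bar a = (a_1,\dots,a_\epsilon)$ by the same tuple, which is a common differential $p$-basis. Lemma~\ref{sepextst} then gives that $L/K$ is a separable field extension. This is the critical input that connects the language expansion with the algebraic hypothesis needed to invoke the definition of a separably differentially closed field.

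The second step is to apply that definition directly. Since $(K,\d) \models \SDCFa$, in particular $(K,\d)$ is separably differentially closed, and $L/K$ is separable. Hence for every quantifier-free $\Ld(K)$-formula $\phi(\bar v)$, satisfiability of $\exists \bar v\, \phi(\bar v)$ in $L$ implies satisfiability in $K$. Now observe that the only difference between $\La$ and $\Ld$ is the presence of the constants $\bar a$, which name elements of $K$ itself. Thus any quantifier-free $\La(K)$-formula is (after interpreting $\bar a$) a quantifier-free $\Ld(K)$-formula, so existential closure in $\Ld(K)$ transfers verbatim to existential closure in $\La(K)$. By Robinson's test, $\SDCFa$ is model-complete.

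I do not anticipate a serious obstacle: all the real work has been done in Section~\ref{sepdiff} (to get the definition of separably differentially closed) and in Lemma~\ref{sepextst} (to recover separability of $L/K$ from the fact that a differential $p$-basis is preserved). What could in principle go wrong is that the $\La$-extension hypothesis fails to force $L/K$ separable, but Proposition~\ref{characsep} already bundled exactly this equivalence. So the proposition is essentially a one-line consequence of the preceding lemmas.
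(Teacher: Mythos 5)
Your proposal is correct and follows essentially the same route as the paper: invoke Lemma~\ref{sepextst} to get separability of $L/K$ from the shared differential $p$-basis, apply the definition of separably differentially closed to obtain existential closedness in $\Ld$, and observe that quantifier-free $\La(K)$-formulas are quantifier-free $\Ld(K)$-formulas since the new constants name elements of $K$. No gaps.
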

\begin{proof}
Let $K$ and $L$ be models of $\SDCFa$ with $K\subset_{\La}L$. It is enough to show that $K$ is existentially closed in $L$ (in the language $\La$). By Lemma~\ref{sepext}, $L/K$ is separable. Thus, since $(K,\d)$ is separably differentially closed, we have that $K$ is existentially closed in $L$ in the language $\Ld$. Now simply note that any quantifier-free $\La(K)$-formula is a quantifier-free $\Ld(K)$-formula. The result follows.
\end{proof}

By Proposition~\ref{modelconsistent}, any model of DF$_{p,\epsilon}^{\bar a}$ extends to a model of $\SDCFa$. This, together with the model-completeness result, tells us that $\SDCFa$ is the model companion of DF$_{p,\epsilon}^{\bar a}$. In fact, we can go further and observe below that $\SDCFa$ is the model-completion of DF$_{p,\epsilon}^{\bar a}$. To do this, we make use of the following amalgamation result of Kolchin. 

\begin{theorem}\cite[Proposition 4, \S II.2]{Kolbook}\label{Kolamal}
Let $(F_1,\d_1)$ and $(F_2,\d_2)$ be differential field extensions of $(K,\d)$ with $F_i/K$ separable for $i=1,2$. Then, there exist a differential extension $(E,\d)/(K,\d)$, with $E/K$ separable, and differential $K$-homomorphisms $g_i:F_i\to E$, for $i=1,2$, such that $E$ is the compositum of $g_1(F_1)$ and $g_2(F_2)$.
\end{theorem}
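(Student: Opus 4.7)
The plan is to carry out the classical tensor-product amalgamation, with the separability hypotheses doing two things: guaranteeing that the tensor product is reduced, and ensuring the resulting field compositum is separable over $K$. First, form $R := F_1 \otimes_K F_2$ and define a derivation $D\colon R \to R$ by
$$D(a \otimes b) = \d_1(a) \otimes b + a \otimes \d_2(b),$$
extended $K$-linearly; this is well-defined precisely because $\d_1$ and $\d_2$ both restrict to $\d$ on $K$, and Leibniz is a direct check. The canonical maps $\iota_i\colon F_i \to R$ are injective since the $F_i$ are fields. The key algebraic input is that $R$ is reduced: this is the classical characterization of separability stating that $F_1/K$ is separable if and only if $F_1 \otimes_K L$ is reduced for every field extension $L/K$, applied with $L = F_2$.

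Next, pick any minimal prime $P$ of $R$ (which exists by Zorn). I claim $P$ is automatically a differential ideal. Since $R$ is reduced and $P$ is minimal, the localization $R_P$ is a field, so for each $a \in P$ there is some $b \in R \setminus P$ with $ab = 0$ in $R$. Applying $D$ to $ab = 0$ and then multiplying by $b$ gives
$$D(a)\,b^2 = -ab\,D(b) = 0,$$
and since $b^2 \notin P$ (as $P$ is prime and $b \notin P$) we conclude $D(a) \in P$. Thus $D$ descends to a derivation on the domain $R/P$ and extends uniquely to its fraction field $E := \mathrm{Frac}(R/P)$ via the quotient rule; call this derivation $\d$.

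Let $g_i\colon F_i \to E$ be the composition $F_i \xrightarrow{\iota_i} R \twoheadrightarrow R/P \hookrightarrow E$. These are differential $K$-homomorphisms by construction. Since $R/P$ is generated as a $K$-algebra by $g_1(F_1) \cup g_2(F_2)$, its fraction field $E$ is precisely the field compositum $g_1(F_1)\cdot g_2(F_2)$. Separability of $E/K$ then follows from the standard fact that the compositum inside any overfield of two separable field extensions of $K$ is again separable over $K$. The main obstacle in this sketch is exactly the reducedness of $R = F_1 \otimes_K F_2$: this is where both separability hypotheses are genuinely used, and can be invoked as a classical commutative-algebra fact (proved in the same section of Kolchin's book as the result itself). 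The remaining ingredients --- minimal primes of a reduced differential ring being automatically differential, derivations extending uniquely to quotients and fraction fields, and compositums of separable extensions being separable --- are standard.
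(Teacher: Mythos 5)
The paper itself offers no proof of this statement---it is imported verbatim from Kolchin \cite[Proposition 4, \S II.2]{Kolbook}---so there is nothing internal to compare against; I will assess your argument on its own terms. Your strategy (the tensor product $R=F_1\otimes_K F_2$ with the product derivation, reducedness of $R$ from separability, passage to a minimal prime, fraction field) is the standard route, and the first steps are all correct: the derivation $D$ is well defined because $\d_1,\d_2$ agree with $\d$ on $K$; reducedness of $R$ follows from MacLane's criterion (in fact separability of just one of the $F_i$ already suffices for this); and your localization argument that a minimal prime of a reduced differential ring is a differential ideal is exactly right.

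The gap is in the final step. The ``standard fact'' you invoke---that the compositum inside an overfield of two separable extensions of $K$ is again separable over $K$---is false. Take $K=\mathbb F_p(u,v)$, let $x$ be transcendental over $K^{1/p}$, and set $y=u^{1/p}+v^{1/p}x$. Then $K(x)/K$ and $K(y)/K$ are purely transcendental, hence separable, but their compositum $K(x,y)$ is not separable over $K$: the elements $1,u^{1/p},v^{1/p}$ are linearly independent over $K$ yet satisfy the nontrivial relation $(-y)\cdot 1+1\cdot u^{1/p}+x\cdot v^{1/p}=0$ with coefficients in $K(x,y)$, so $K(x,y)$ and $K^{1/p}$ are not linearly disjoint over $K$. (This bad compositum is precisely $\operatorname{Frac}(R/Q)$ for a \emph{non}-minimal prime $Q$ of $K(x)\otimes_K K(y)$, so the separability of $E/K$ really does depend on $P$ being minimal and cannot be deduced from a statement about arbitrary compositums.) Fortunately the correct argument is short and uses an observation you already made: since $R$ is reduced and $P$ is minimal, $R_P$ is a field and the canonical map $R_P\to\operatorname{Frac}(R/P)$ is an isomorphism, so $E\cong R_P$ is a localization of $R$. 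Therefore $E\otimes_K K^{1/p}$ is a localization of $R\otimes_K K^{1/p}=F_1\otimes_K\bigl(F_2\otimes_K K^{1/p}\bigr)$. The inner factor $F_2\otimes_K K^{1/p}$ is a field, because separability of $F_2/K$ makes $F_2$ and $K^{1/p}$ linearly disjoint over $K$ and a domain integral over a field is a field; then the full tensor product is reduced because $F_1/K$ is separable. A localization of a reduced ring is reduced, so $E\otimes_K K^{1/p}$ is reduced and $E/K$ is separable by MacLane's criterion. (Note that this is where the \emph{second} separability hypothesis is genuinely used.) With this replacement your proof is complete.
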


\begin{corollary}\label{modelcompletion} \
\begin{enumerate}
\item DF$_{p,\epsilon}^{\bar a}$ has the amalgamation property.
\item $\SDCFa$ is the model-completion of DF$_{p,\epsilon}^{\bar a}$. 
\end{enumerate}
\end{corollary}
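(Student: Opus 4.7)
The plan is to prove (1) by combining Kolchin's field-theoretic amalgamation (Theorem \ref{Kolamal}) with the $p$-basis manipulation lemma (Lemma \ref{reducebasis}), and then to deduce (2) from (1) via the standard model-theoretic fact that a model-companion with amalgamation in the base theory is a model-completion.

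For (1), I start with $(K,\d)$ a common $\La$-substructure of two models $(F_1,\d_1)$ and $(F_2,\d_2)$ of DF$_{p,\epsilon}^{\bar a}$. By Lemma \ref{sepextst}, each extension $F_i/K$ is separable. Theorem~\ref{Kolamal} then provides a differential field $(E,\d)$ with $E/K$ separable and differential $K$-homomorphisms $g_i : F_i \to E$ whose images generate $E$. Identifying $F_i$ with $g_i(F_i)$, I view $\bar a$ as a common tuple in $E$. Since $\bar a$ is a differential $p$-basis for $K$ and $E/K$ is separable, Proposition \ref{characsep} guarantees that $\bar a$ is differentially $p$-independent in $(E,\d)$ (and in particular $\bar a \subseteq C_K \subseteq C_E$). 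However, $\bar a$ need not span $C_E$ over $E^p$, so the differential degree of imperfection of $E$ might exceed $\epsilon$.

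To remedy this, I extend $\bar a$ to a full differential $p$-basis of $(E,\d)$. This is a routine application of Zorn's lemma in the $E^p$-vector space $C_E$: one adds elements of $C_E$ to $\bar a$ as long as $p$-independence over $E^p$ is preserved, obtaining a set $B$ disjoint from $\bar a$ such that $\bar a \cup B$ is a differential $p$-basis for $(E,\d)$. Lemma \ref{reducebasis} then produces a differential extension $(L,\d)$ of $(E,\d)$ in which $\bar a$ alone is a differential $p$-basis, so that $L \models \mathrm{DF}_{p,\epsilon}^{\bar a}$. The compositions $F_i \hookrightarrow E \hookrightarrow L$ are $\La$-embeddings over $K$, completing the amalgamation.

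For (2), the discussion immediately preceding the corollary, using Proposition \ref{modelconsistent} and Proposition~\ref{modelcompletea}, establishes that $\SDCFa$ is a model-companion of $\mathrm{DF}_{p,\epsilon}^{\bar a}$. Combining this with the amalgamation property from part (1) and invoking the standard criterion (a model-companion of a theory with the amalgamation property is a model-completion) yields that $\SDCFa$ is the model-completion of $\mathrm{DF}_{p,\epsilon}^{\bar a}$. The only nontrivial step in the whole argument is the construction in part (1); in particular, the main subtlety is ensuring that the Kolchin amalgam can be enlarged to an amalgam in the expanded language $\La$, which is exactly what the interplay between Proposition \ref{characsep} and Lemma \ref{reducebasis} achieves.
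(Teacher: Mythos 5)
Your proposal is correct and follows essentially the same route as the paper's proof: Lemma~\ref{sepextst} to get separability of $F_i/K$, Kolchin's amalgamation (Theorem~\ref{Kolamal}) to produce a separable amalgam $E$, extension of $\bar a$ to a differential $p$-basis $\bar a\cup B$ of $E$, Lemma~\ref{reducebasis} to kill $B$ and recover differential degree of imperfection $\epsilon$, and then the standard ``model-companion plus amalgamation equals model-completion'' criterion for part (2). The extra detail you supply (the explicit appeal to Proposition~\ref{characsep} and the Zorn's lemma step) is exactly what the paper leaves implicit.
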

\begin{proof}
(1) Let $K,F_1,F_2$ be models of DF$_{p,\epsilon}^{\bar a}$ with $\La$-embeddings $f_1:K\to F_1$ and $f_2:K\to F_2$. By Lemma~\ref{sepext}, the extensions $F_1/K$ and $F_2/K$ are separable. By Theorem~\ref{Kolamal}, there is a differential extension $(E,\d)/(K,\d)$, with $E/K$ separable, and $\La$-embeddings $g_1:F_1\to E$ and $g_2:F_2\to E$ with $g_1\circ f_1=g_2\circ f_2$. Since $E/K$ is separable, the differential $p$-basis $\bar a$ of $(K,\d)$ is differentially $p$-independent in $(E,\d)$. Thus, there is $B$ such that $\bar a\cup B$ is a differential $p$-basis for $(E,\d)$. By Lemma~\ref{reducebasis}, there is an extension $(L,\d)/(E,\d)$ where $\bar a$ is a differential $p$-basis. Thus, $(L,\d)$ is the desired amalgam.

\medskip

Now (2) follows from (1) and Proposition~\ref{modelcompletea}, as a model-companion of a theory $T$ is a model-completion if and only if $T$ has amalgamation.
\end{proof}

We finish with the promised completeness result.

\begin{corollary}\label{completefinite1}
The theory $\SDCFe$ is complete.
\end{corollary}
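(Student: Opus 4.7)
The plan is to reduce completeness of $\SDCFe$ to completeness of $\SDCFa$ and then invoke Corollary~\ref{modelcompletion} together with a prime-model-style embedding argument for the base theory DF$_{p,\epsilon}^{\bar a}$. The reduction itself is essentially formal: given any two models $(K,\d),(L,\d)\models\SDCFe$, they both possess differential $p$-bases of size $\epsilon$ (this is the content of having differential degree of imperfection $\epsilon$), so, choosing such bases, they expand to $\La$-structures $(K,\d,\bar a^K)$ and $(L,\d,\bar a^L)$ that are models of $\SDCFa$. Any $\La$-sentence true in one is, in particular, an $\Ld$-sentence; so if we can show that the two expansions are elementarily equivalent, the two reducts are elementarily equivalent in $\Ld$, proving completeness of $\SDCFe$.

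To establish completeness of $\SDCFa$, the plan is to use the standard fact that a model-completion of a theory with the joint embedding property is complete. By Corollary~\ref{modelcompletion}(2), $\SDCFa$ is the model-completion of DF$_{p,\epsilon}^{\bar a}$, so it suffices to exhibit a single model of DF$_{p,\epsilon}^{\bar a}$ that $\La$-embeds into every model. The natural candidate is
$$K_0 \;=\; \mathbb{F}_p(t_1,\dots,t_\epsilon)$$
equipped with the trivial derivation and interpretation $\bar a=(t_1,\dots,t_\epsilon)$. Since $C_{K_0}=K_0$ and the $t_i$ are algebraically independent over $\mathbb{F}_p$, one checks $[C_{K_0}:K_0^p]=p^\epsilon$ with $(t_1,\dots,t_\epsilon)$ a differential $p$-basis, so $K_0\models$ DF$_{p,\epsilon}^{\bar a}$. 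For any $(K,\d,\bar a^K)\models$ DF$_{p,\epsilon}^{\bar a}$, the tuple $\bar a^K$ consists of constants whose $p$-monomials are linearly independent over $K^p$, hence the $a_i^K$ are algebraically independent over $\mathbb{F}_p$; the induced derivation on $\mathbb{F}_p(a_1^K,\dots,a_\epsilon^K)\subseteq K$ is trivial, so the evident field embedding $t_i\mapsto a_i^K$ is an $\La$-embedding $K_0\hookrightarrow K$. This gives the required joint embedding property.

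The only genuinely nontrivial ingredient has already been proved: amalgamation for DF$_{p,\epsilon}^{\bar a}$ (via Kolchin's separable amalgam together with the basis-reduction Lemma~\ref{reducebasis}), which underwrites the model-completion statement of Corollary~\ref{modelcompletion}. Once that is in hand, the present corollary is essentially a matter of packaging: joint embedding plus model-completion yields completeness of $\SDCFa$, and the language-reduction observation above then transfers completeness down to $\SDCFe$. The main, and really only, thing to be careful about is the construction of the prime-like model $K_0$ with the correct differential degree of imperfection and the verification that $\bar a^K$ is always algebraically independent over $\mathbb{F}_p$ in an arbitrary model---both of which are immediate from the definition of differential $p$-basis.
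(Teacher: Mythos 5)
Your proposal is correct and follows essentially the same route as the paper: reduce to completeness of $\SDCFa$, invoke the fact that a model-completion of a theory admitting a structure that embeds into every model is complete (the paper cites Robinson for this), and exhibit $\mathbb{F}_p(t_1,\dots,t_\epsilon)$ with the trivial derivation as that prime structure, using that a differential $p$-basis is algebraically independent over $\mathbb{F}_p$. The only cosmetic difference is that you frame the key fact via the joint embedding property while the paper states it directly in terms of a prime structure; the substance is identical.
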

\begin{proof}
It suffices to show that $\SDCFa$ is complete. By \cite[Theorem 4.2.3]{Robinson65}, the model-completion of a theory that has a model that embeds into every other model must be complete. In our case $\mathbb F_p(t_1,\dots,t_\epsilon)$ equipped with the trivial derivation is such a model of DF$_{p,\epsilon}^{\bar a}$. Indeed, for any model $(K,\d)$ of DF$_{p,\epsilon}^{\bar a}$, the differential $p$-basis $\bar a$ is algebraically independent over $\mathbb F_p$ (as $\mathbb F_p(\bar a)/\mathbb F_p$ is separable).
\end{proof}

\subsection{Completeness of $\SDCF_{p,\infty}$}\label{completeinfinitecase}
In this section, we expand the language of differential fields by natural predicates modelled by those in the algebraic case \cite[\S 1]{Wood73}. For each $n\in\mathbb N$, we let $Q_n$ be an $n$-ary relation symbol. We set $Q=(Q_n)_{n>0}$. We let DF$_{p,\infty}^{Q}$ be the $\mathcal L_{\d,Q}$-theory consisting of DF$_{p,\infty}$ together with axioms specifying that $Q_n(x_1,\dots, x_n)$ holds if and only if $(x_1,\dots,x_n)$ is differentially $p$-independent (i.e., each entry is a constant and their $p$-monomials are linearly independent over the field of $p$-th powers). Similarly for SDCF$_{p,\infty}^Q$. As in the previous section, we will write $K\subseteq_{\mathcal L_{\d,Q}} L$ to mean that this is an extension of $\mathcal L_{\d,Q}$-structures. As in Lemma~\ref{sepextst}, when $K$ and $L$ are models of DF$_{p,\infty}^Q$, the extension $L/K$ is separable if and only if $K\subseteq_{\mathcal L_{\d,Q}} L$.

\begin{proposition}\label{modcompinfty} \
\begin{enumerate}
\item DF$_{p,\infty}^Q$ has the amalgamation property.
\item SDCF$_{p,\infty}^Q$ is the model-completion of DF$_{p,\infty}^Q$.
\end{enumerate}
\end{proposition}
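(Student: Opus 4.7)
The strategy mirrors Corollary \ref{modelcompletion} in the finite case, combined with an adaptation of Wood's treatment of SCF$_{p,\infty}$ in \cite{Wood79}. I would prove (1) by a Kolchin-style amalgamation, and then establish (2) by combining (1) with model-consistency and model-completeness of $\SDCF_{p,\infty}^Q$, invoking the standard principle that a model-companion of a theory with the amalgamation property is its model-completion.

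For (1), let $K, F_1, F_2$ be models of DF$_{p,\infty}^Q$ with $\mathcal L_{\d,Q}$-embeddings $f_i : K \to F_i$. Preservation of the $Q_n$-predicates on tuples from $K$, together with Proposition \ref{characsep}, gives that each $F_i/f_i(K)$ is a separable field extension. Kolchin's amalgamation (Theorem \ref{Kolamal}) produces a differential field $(E,\d)$ extending $K$ with $E/K$ separable and differential $K$-homomorphisms $g_i : F_i \to E$. The tensor-product realisation of Kolchin's theorem in fact yields $E/g_i(F_i)$ separable for each $i$: since $F_j/K$ is separable, $F_j \otimes_K F_i$ is a separable $F_i$-algebra, and its quotient by a minimal prime has fraction field separable over $F_i$. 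Equipping $E$ with the canonical $Q$-interpretation, $E \models $ DF$_{p,\infty}^Q$ (infinite differential degree of imperfection is preserved since a differential $p$-basis of $K$ remains differentially $p$-independent in $E$ by Proposition \ref{characsep}). Separability of $E/g_i(F_i)$ together with Proposition \ref{characsep} then make each $g_i$ an $\mathcal L_{\d,Q}$-embedding.

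For (2), model-consistency follows from Proposition \ref{modelconsistent}: starting from $K \models $ DF$_{p,\infty}^Q$ with (infinite) differential $p$-basis $A$, the proposition produces $L \models \SDCF$ retaining $A$ as differential $p$-basis, so $L \models \SDCF_{p,\infty}$; since the construction is by constrained extensions, Lemma \ref{preserveconstants} makes $K \hookrightarrow L$ an $\mathcal L_{\d,Q}$-embedding. For model-completeness, given $K \subseteq_{\mathcal L_{\d,Q}} L$ with both models of $\SDCF_{p,\infty}^Q$, the extension $L/K$ is separable as fields (by $Q$-preservation and Proposition \ref{characsep}), so $K$ is existentially closed in $L$ as an $\mathcal L_\d$-structure by the definition of separably differentially closed. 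To upgrade to $\mathcal L_{\d,Q}$: given a witness $\bar b \in L$ of an existential $\mathcal L_{\d,Q}(K)$-formula, put the matrix in disjunctive normal form, replace each negated $Q$-literal by its existential $\mathcal L_\d$-equivalent (absorbed into the existential quantifier), and specialise $\bar b$ via Lemma \ref{specialise} to a tuple $\bar \alpha$ constrained over $K$, with the constraint polynomial chosen to encode both the differential-field inequations and sufficient witnesses for the finitely many positive $Q$-literals. Then $\bar \alpha$ is realised in $K$ by Theorem \ref{sevchar}(3), and one verifies the $Q$-literals are preserved by this specialisation. Combining the amalgamation in (1) with model-completeness of $\SDCF_{p,\infty}^Q$ gives the model-completion assertion.

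The main obstacle is the model-completeness step: in contrast to the finite case (Proposition \ref{modelcompletea}), where qf $\mathcal L_{\d,\bar a}$-formulas coincide with qf $\mathcal L_\d$-formulas and existential closure transfers for free, each $Q_n$-predicate here is $\Pi_1$ in $\mathcal L_\d$. Guaranteeing that a single constrained specialisation inside $K$ simultaneously preserves the differential equations and enough $p$-independence witnesses for the positive $Q$-literals is the technical heart. An alternative route, closer to Wood's blueprint in \cite{Wood79}, is to embed $K\langle \bar b\rangle$ as an $\mathcal L_{\d,Q}$-substructure over $K$ into a suitably saturated elementary extension of $K$ via Corollary \ref{primemodel4} and extract the witness from there.
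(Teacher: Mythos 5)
Your overall architecture for part (2) (model-consistency via Proposition~\ref{modelconsistent} plus Lemma~\ref{preserveconstants}, then model-completeness, then amalgamation $\Rightarrow$ model-completion) matches the paper, and your treatment of part (1) is fine: the paper dismisses it as ``an easy adaptation'' of Corollary~\ref{modelcompletion}(1), and your tensor-product justification that the Kolchin amalgam $E$ is separable over each $g_i(F_i)$ (which is genuinely needed here for $Q_n$-preservation, unlike in the finite case where the extra symbols are just constants) is a correct way to supply the missing detail.

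However, the model-completeness step in (2) has a genuine gap, and it sits exactly where you say the ``technical heart'' is. Your proposed mechanism is to take a constrained specialisation $\bar\alpha$ of the witness $\bar b$ ``with the constraint polynomial chosen to encode \ldots{} sufficient witnesses for the finitely many positive $Q$-literals'' and then realise $\bar\alpha$ in $K$ via Theorem~\ref{sevchar}(3). This cannot work as stated: $Q_n(\hat x)$ asserts differential $p$-independence, which is a universal condition (all nontrivial $K^p$-linear combinations of the $p$-monomials of $\hat x$ are nonzero), and it is not determined by the differential isomorphism type of the tuple over $K$ together with any single inequation $g\neq 0$. A constraint controls only which prime differential ideal the specialisation realises generically; two tuples with the same defining differential ideal over $K$ can differ on $p$-independence (this depends on the ambient field of $p$-th powers, not on the ideal). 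So there is no choice of constraint forcing the tuple you find in $K$ to satisfy the positive $Q_n$-literals. The paper closes this hole with two ideas absent from your proposal: (a) if a differential $p$-basis $A$ of $K$ is still one for $L$, it \emph{exchanges} each $Q_n$-witnessing subtuple $\hat b$ against finitely many elements of $A\subseteq K$ over $L^p$, producing an existential $\Ld(K)$-formula $\psi$ with $L\models\psi(\hat b)$ and $L\models\forall\hat x(\psi\rightarrow Q_n)$; this converts the whole formula into an existential $\Ld(K)$-formula and reduces to $\Ld$-existential closedness; (b) if $A$ is not a $p$-basis of $L$, it adjoins new algebraically independent constants $C$, takes a constrained construction $F$ over $K(C)$ to reduce to case (a), and then runs a compactness argument on $\mathrm{Diag}(K)\cup\SDCF_{p,\infty}^Q\cup\{Q_n(\bar c)\}\cup\{\mathcal C \text{ alg.\ indep.}\}$, finally checking directly that $K$ itself realises the finitely many residual conditions (a differentially $p$-independent tuple avoiding a given hypersurface, obtained as $(\lambda_1a_1,\dots,\lambda_na_n)$ with $\lambda_i\in K^p$). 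Your ``alternative route'' via a saturated elementary extension gestures in the direction of (b) but is not developed; as written, the proof of (2) is incomplete.
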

\begin{proof}
(1) The proof is an easy adaptation of Corollary~\ref{modelcompletion}(1).

\smallskip

(2) Note that by Proposition~\ref{modelconsistent} any model of DF$_{p,\infty}^Q$ extends to a model of SDCF$_{p,\infty}^Q$. Thus, by part (1), it suffices to show that SDCF$_{p,\infty}^Q$ is model-complete. Let $K$ and $L$ be models of SDCF$_{p,\infty}^Q$ with $K\subseteq_{\mathcal L_{\d,Q}} L$. It is enough to show that $K$ is existentially closed in $L$ (in the language $\mathcal L_{\d,Q}$). Let $\phi(\bar x)$ be a quantifier-free $\mathcal L_{\d,Q}(K)$-formula and assume there is $\bar b$ from $L$ such that $L\models \phi(\bar b)$. Let $A$ be a differential $p$-basis for $(K,\d)$. We now consider two cases.

\medskip

\noindent \underline{Case 1. $A$ is a differential $p$-basis for $L$.} In this case we prove that there exists an existential $\Ld(K)$-formula $\rho(\bar x)$ with $L\models \rho(\bar b)$ and
$$L\models \forall \bar x \; (\rho(\bar x)\rightarrow \phi(\bar x)).$$
Note that this is enough to finish the proof in this case (as $K$ is existentially closed in $L$ in the language $\Ld$). Note that, since $\phi(\bar x)$ is a quantifier-free $\mathcal L_{\d,Q}(K)$-formula, we may assume that $\phi$ is a conjunction of equations, inequations, and formulas $Q$ and $\neg Q$. We construct $\rho$ as follows. Replace each appearance of $\neg Q_n(\hat x)$, where $\hat x=(x_1,\dots,x_n)$ is a sub-tuple of $\bar x$, with 
$$(\lor_{i=1}^n\d x_i\neq 0)\; \lor \; (y_{0}^pm_0(\hat x)+\cdots +y_s^p m_s(\hat x)=0 \text{ with not all } y_i\text{'s zero} )$$
and add $\exists y_0,\dots,y_s$ in front of the new formula (recall that the $m_i$'s denote the $p$-monomials). After doing this, we obtain $\rho'$ such that $L\models \forall x (\phi \leftrightarrow \rho')$. Now, for each appearance of $Q_n(\hat x)$, since $L\models Q_n(\hat b)$, we have that the tuple $\hat b=(b_1,\dots,b_n)$ is differentially $p$-independent. Since $A\subseteq K$ is a differential $p$-basis for $(L,\d)$, we can find distinct elements $a_1,\dots,a_m$ from $A$ and $\beta_{n+1},\dots,\beta_{m}$ also from $A$ such that 
$$L^p(a_1,\dots,a_m)=L^p(b_1,\dots,b_n,\beta_{n+1},\dots,\beta_m).$$
Then, the existential $\Ld(K)$-formula $\psi(\hat x)$ given by
\begin{align*}
\exists_{j} z_{j}\,\exists_{i,k} w_{i,k}\; \land_{i=1}^n\d x_i= 0 \; \land & \; \land_{i=1}^m (a_i=w_{i,1}^px_1+\cdots+w_{i,n}^px_n+w_{i,n+1}^pz_{n+1}+\cdots+w_{i,m}^pz_m) \\
\land & \; \land_{j=n+1}^m \delta z_j=0
\end{align*}
 is satisfied by $\hat b$ and clearly $L\models \forall \hat x(\psi\rightarrow Q_n)$. Replacing all appearances of $Q_n$ by the corresponding $\psi$, we obtain the desired $\rho$.

\bigskip

\noindent \underline{Case 2. $A$ is {\bf not} a differential $p$-basis for $L$.} Let $B$ be a differential $p$-basis for $L$ with $A\subset B$. Let $C=B\setminus A$. Note that $C$ is algebraically independent over $K$ and that the field $K(C)$ is a differential subfield of $L$. Furthermore, $K(C)$ has $B$ as a differential $p$-basis. Let $(F,\d)$ be a constrained construction in $L$ over $K(C)$ in the sense of Section~\ref{constructions}. Then $F\models \SDCF$ and has $B$ as differential $p$-basis. By \underline {Case 1}, we obtain that $F$ is existentially closed in $L$ (in the language $\mathcal L_{\d,Q}$), and hence $F$ has a realisation of $\phi$. 

Add a new set of constant symbols $\mathcal C$ to the language $\mathcal L_{\d,Q}$ such that $|\mathcal C|=|C|$. We now claim that the sentence $\exists\bar x\; \phi(\bar x)$ is implied by the following
\begin{align*}
\Sigma=\text{Diag}(K)\; \cup \; \SDCF_{p,\infty}^Q\; &\cup\;  \{Q_n(\bar c): \text{whenever }\bar c\in \mathcal C^n \text{ has distinct entries}\} \\
&\cup \{\mathcal C \text{ is algebraically independent over }K\}.
\end{align*}
Let $E$ be a model of $\Sigma$. Then $K$ is a differential subfield of $E$, $\mathcal C^E$ is differentially $p$-independent in $E$, and $\mathcal C^E$ algebraically independent over $K$. Thus, the obvious map $\sigma:K(C)\to K(\mathcal C^E)$ is a differential $K$-isomorphism. By Corollary~\ref{primemodel4}, there is a differential $K$-embedding $\sigma':F\to E$. The image of $F$ under $\sigma'$ is an $\mathcal L_{\d,Q}$-substructure of $E$ (since $\mathcal C^E$ will be a differential $p$-basis for $\sigma'(F)$ and is also differentially $p$-independent in $E$), and hence the image of the realisation of $\phi$ in $F$ will be a realisation of $\phi$ in $E$. This shows that $\exists \bar x \phi(\bar x)$ is indeed implied by $\Sigma$.

By compactness, there is a single $Q_n$, a single (algebraic) polynomial $f\in K[\hat t]$, and a tuple $\hat c$ from $\mathcal C$ (with $\hat t$ and $\hat c$ of the same length) such that 
$$\Sigma_0(\hat c)=\text{Diag}(K)\; \cup \; \SDCF_{p,\infty}^Q\; \cup\{Q_n(\hat c)\}\cup \{f(\hat c)\neq 0\}$$
implies $\exists \bar x\phi(\bar x)$. To complete the proof, it suffices to show that $K\models \exists \hat x \Sigma_0(\hat x)$. 
To prove this, take distinct $a_1,\dots,a_n$ from $A$ (the differential $p$-basis of $K$), and consider the inequation
$$f(\lambda_1 a_1,\dots,\lambda_n a_n)\neq 0$$
where the $\lambda_i$'s are varying in $K^p$. Since $K\models \SDCF$, $K^p$ is infinite and hence we can find nonzero values in $K^p$ for the $\lambda_i$'s such that the inequation is satisfied. Then, setting $\bar d=(\lambda_1a_1,\dots,\lambda_na_n)$ yields the desired tuple. This completes the proof.
\end{proof}

\begin{corollary}
The theory $\SDCF_{p,\infty}$ is complete.
\end{corollary}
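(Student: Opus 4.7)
The plan is to parallel the argument of Corollary~\ref{completefinite1}. By Proposition~\ref{modcompinfty}(2), $\SDCF_{p,\infty}^Q$ is the model-completion of DF$_{p,\infty}^Q$, so by Robinson's theorem \cite[Theorem 4.2.3]{Robinson65} the completeness of $\SDCF_{p,\infty}^Q$ reduces to exhibiting a model of DF$_{p,\infty}^Q$ that embeds as an $\mathcal L_{\d,Q}$-structure into every other model. The natural candidate, by analogy with the finite case, is $(K_0,\d) = (\mathbb F_p(t_i : i \in \NN),\d)$ equipped with the trivial derivation, where $\{t_i : i \in \NN\}$ is a countable set of algebraically independent indeterminates over $\mathbb F_p$. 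Since $C_{K_0} = K_0$ and $\{t_i\}$ is an (algebraic) $p$-basis of $K_0$, it is a differential $p$-basis of infinite cardinality, whence $K_0 \models$ DF$_{p,\infty}^Q$.

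For an arbitrary $(K,\d) \models $ DF$_{p,\infty}^Q$ I would pick countably many distinct $b_1, b_2, \ldots$ from a differential $p$-basis of $K$ and define $\sigma(t_i) := b_i$. The $b_i$'s are $\d$-constants and their $p$-monomials are linearly independent over $K^p \supseteq \mathbb F_p$, from which algebraic independence of the $b_i$'s over $\mathbb F_p$ follows by a standard reduction along the $p$-basis expansion. Thus $\sigma$ extends uniquely to a differential field embedding $K_0 \hookrightarrow K$.

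The substantive step is verifying that $\sigma$ preserves each $Q_n$, and here I would invoke Proposition~\ref{characsep}. The set $\{b_i\}$ is a differential $p$-basis of $\sigma(K_0) = \mathbb F_p(b_i : i \in \NN)$ and is also differentially $p$-independent in $K$ (being contained in a differential $p$-basis of $K$), so $K/\sigma(K_0)$ is a separable extension and every differentially $p$-independent subset of $\sigma(K_0)$ remains differentially $p$-independent in $K$. Translating through $\sigma$ gives $K_0 \models Q_n(\bar a) \Rightarrow K \models Q_n(\sigma(\bar a))$. The converse direction is immediate, since linear dependence of $p$-monomials over $\sigma(K_0)^p$ is inherited from linear dependence over the larger field $K^p$. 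Hence $\sigma$ is an $\mathcal L_{\d,Q}$-embedding, and $\SDCF_{p,\infty}^Q$ is complete.

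To descend to the reduct theory $\SDCF_{p,\infty}$ in $\Ld$, I would note that every model of $\SDCF_{p,\infty}$ admits a unique $\mathcal L_{\d,Q}$-expansion to a model of $\SDCF_{p,\infty}^Q$ (by interpreting each $Q_n$ as differential $p$-independence), so any two models of $\SDCF_{p,\infty}$ agree on every $\Ld$-sentence. I do not anticipate any serious obstacle here: the only genuine content is the preservation of $Q_n$ under $\sigma$, and this is essentially packaged by Proposition~\ref{characsep}.
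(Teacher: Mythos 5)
Your proof is correct and follows essentially the same route as the paper: reduce to completeness of $\SDCF_{p,\infty}^Q$ via its status as a model-completion, and exhibit $\mathbb F_p(t_1,t_2,\dots)$ with the trivial derivation as a model of DF$_{p,\infty}^Q$ that $\mathcal L_{\d,Q}$-embeds into every other model. The paper leaves the verification of the embedding implicit, whereas you spell it out (correctly) via Proposition~\ref{characsep}; there is no substantive difference.
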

\begin{proof}
It suffices to show that SDCF$_{p,\infty}^Q$ is complete. As in the proof of Corollary~\ref{completefinite1}, it suffices to show that there is a model of DF$_{p,\infty}^Q$ that embeds into any other model. In this case the field $\mathbb F_p(t_1,t_2,\dots)$ equipped with the trivial derivation is such a model of DF$_{p,\infty}^Q$.
\end{proof}

\section{Model theoretic properties of $\SDCFl$}\label{finalprop}

In this section, we establish further model-theoretic properties of the theory $\SDCFe$ (for $\epsilon\in \mathbb N_0\cup\{\infty\}$). We expand the language by the differential analogue of the algebraic $\lambda$-functions and denote this theory by $\SDCFl$. We prove that $\SDCFl$ has quantifier elimination, is a stable (but not superstable) theory and prime model extensions exist and are unique (up to isomorphism).

We carry on the notation from previous sections. In particular, $(K,\d)$ is a field of characteristic $p>0$ of differential degree of imperfection $\epsilon$.

\subsection{Quantifier elimination for finite $\epsilon$}\label{qefinite} In this subsection $\epsilon$ is finite. Note that the theory $\SDCFe$ does not admit quantifier elimination. For instance, for a model $(K,\d)$, the subfield $K^{p^2}$ is not quantifier-free definable, being a proper infinite subfield of $C_K$. As in the algebraic setup, we prove a quantifier elimination result by first expanding the language to include what we call the \emph{differential $\lambda$-functions}.

\medskip

Let $\bar a=(a_1,\dots,a_\epsilon)$ be a differential $p$-basis for $(K,\d)$, set $s=p^\epsilon -1$, and fix an order of the set of $p$-monomials $m(\bar a)=\{m_0(\bar a), \dots, m_s(\bar a)\}$. The differential $\lambda$-functions of $(K,\d)$ are defined as the functions $\ell_i:K\to K$, for $i=0,\dots,s$, determined by the conditions
$$
(\dagger)\quad \left\{
\begin{matrix}
\; \ell_i(b)=0 & \text{ if }b\notin C_K \\
\; b= \ell_0(b)^pm_0(\bar a)+\cdots+\ell_s(b)^pm_s(\bar a)& \text{if } b\in C_K
\end{matrix}
\right.
$$
for all $b\in K$.

\begin{remark}
Note that the differential $\lambda$-functions do not generally commute with the derivation. For example, letting $\epsilon=0$, we get that $\ell_0:K\to K$ is the $p$-th root function on $C_K$ and zero elsewhere. If $\d$ is nontrivial on $K$, then for any $a\notin C_K$ we have
$$\ell_0(\d(a^p))=0 \quad \text{ while }\quad \d(\ell_0(a^p))=\d(a)\neq 0.$$
\end{remark}

\

We now expand the language $\Ld$ of differential fields by constant symbols $\bar a=(a_1,\dots,a_\epsilon)$ and unary-function symbols $\ell_0,\dots,\ell_s$. We denote this by $\Ll$. In a model of DF$_{p,\epsilon}$ these new symbols are to be interpreted as a differential $p$-basis and the differential $\lambda$-functions, respectively. Note that when $\epsilon=0$, the tuple $\bar a$ is the empty tuple and there is a single differential $\lambda$-function $\ell_0$ (which is the $p$-th root function on constants, cf. Section~\ref{modelpreli}). We denote by DF$_{p,\epsilon}^{\ell}$ the $\Ll$-theory consisting of DF$_{p,\epsilon}$ together with axioms specifying that $\bar a$ is a differentially $p$-independent set and the $\ell_i$'s satisfy condition $(\dagger)$ above (it then follows that $\bar a$ is a differential $p$-basis, but this is not part of the axioms). Similarly, for $\SDCFl$. We note that DF$_{p,\epsilon}^{\ell}$ is a universal theory.

As in the previous section, to avoid confusion, we will write $K\subseteq_{\Ll}L$ to denote an extension of $\Ll$-structures. The results in Section~\ref{compfinite}, see Lemma~\ref{sepext} and Corollary~\ref{modelcompletion}(1), yield the following.

\begin{lemma}\label{adapt} \
\begin{enumerate}
\item 
Let $(L,\d)\models\DF_{p,\epsilon}^{\ell}$ and $K\subseteq_{\Ld} L$. Then, $K \subseteq_{\Ll} L$ if and only if $L/K$ is separable. In particular, if $K\subseteq_{\Ll} L$ then $K\models\DF_{p,\epsilon}^{\ell}$.
\item Every model of $\DF_{p,\epsilon}^{\ell}$ can be extended to a model of $\SDCFl$
\item $\DF_{p,\epsilon}^{\ell}$ has the amalgamation property.
\end{enumerate}
\end{lemma}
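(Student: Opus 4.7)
The plan is to adapt the arguments already used for the $\La$-theory (Lemma~\ref{sepextst} and Corollary~\ref{modelcompletion}), with the added bookkeeping of the $\ell_i$-symbols. The unifying observation I would rely on is that in any model of DF$_{p,\epsilon}^{\ell}$ the axioms force $\bar a$ to be a differential $p$-basis: differential $p$-independence is axiomatic, and spanning of the field of constants over the $p$-th powers is forced by $(\dagger)$. Consequently the values $\ell_i(b)$ for a constant $b$ are uniquely determined by $(\dagger)$ together with the linear independence of $m(\bar a)$ over the $p$-th powers, and this uniqueness will propagate $\ell_i$-closure across separable extensions. Recall also that DF$_{p,\epsilon}^{\ell}$ is a universal theory, so $\Ll$-substructures of its models are again models.

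For part (1) forward, an $\Ll$-substructure $K$ of $L$ is automatically a model of DF$_{p,\epsilon}^{\ell}$ by universality (this already gives the ``in particular'' clause), so $\bar a$ is a differential $p$-basis of $K$; since $\bar a$ is also differentially $p$-independent in $L$, Proposition~\ref{characsep} gives $L/K$ separable. For the converse, assuming $L/K$ separable, I would verify $\ell_i$-closure of $K$: for $b\in K\setminus C_K$ one has $b\notin C_L$, hence $\ell_i(b)=0\in K$; for $b\in C_K$, using that $\bar a$ is a differential $p$-basis of $K$ yields $c_i\in K$ with $b=\sum c_i^p m_i(\bar a)$, and Proposition~\ref{characsep} (which keeps $\bar a$ differentially $p$-independent in $L$) together with uniqueness of such expressions in $L$ forces $\ell_i(b)=c_i\in K$. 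For part (2), starting from $K\models$~DF$_{p,\epsilon}^{\ell}$, I would apply Proposition~\ref{modelconsistent} to the underlying differential field to obtain a differential extension $L\models\SDCF$ still having $\bar a$ as a differential $p$-basis, and then define $\ell_i^L$ by $(\dagger)$; the same uniqueness argument (using that $b\in C_K$ iff $b\in C_L$ for $b\in K$) ensures that $\ell_i^L$ restricts to $\ell_i^K$ on $K$, so $K\subseteq_{\Ll}L\models\SDCFl$.

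For part (3), given $\Ll$-embeddings $f_j:K\to F_j$ ($j=1,2$) of models of DF$_{p,\epsilon}^{\ell}$, part (1) makes each $F_j/K$ separable, so Theorem~\ref{Kolamal} supplies a differential amalgam $g_j:F_j\to E$ over $K$ with $E/K$ separable. In $E$, $\bar a$ is still differentially $p$-independent by Proposition~\ref{characsep}; I would extend it to a differential $p$-basis $\bar a\cup B$ of $E$ and invoke Lemma~\ref{reducebasis} to produce a differential extension $L\supseteq E$ in which $\bar a$ alone is a differential $p$-basis. Equipping $L$ with $\ell_i$ via $(\dagger)$ yields an $\Ll$-amalgam, with each $g_j$ automatically an $\Ll$-homomorphism by the recurring uniqueness step. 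That uniqueness step is really the only obstacle: it succeeds in each case precisely because separability over the smaller field, via Proposition~\ref{characsep}, preserves the differential $p$-independence of $\bar a$ in the larger field, which in turn pins down the $\ell_i$-values and makes everything coherent.
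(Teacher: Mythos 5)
Your proposal is correct and follows exactly the route the paper intends: the paper gives no written proof of this lemma, stating only that the arguments of Lemma~\ref{sepextst} and Corollary~\ref{modelcompletion}(1) ``can easily be adapted,'' and your adaptation --- universality of DF$_{p,\epsilon}^{\ell}$ for the ``in particular'' clause, Proposition~\ref{characsep} for separability, Proposition~\ref{modelconsistent} plus defining the $\ell_i$ by $(\dagger)$ for (2), and Theorem~\ref{Kolamal} with Lemma~\ref{reducebasis} for (3) --- is the right one. One small point to tighten: in the converse direction of (1) you invoke ``$\bar a$ is a differential $p$-basis of $K$,'' but your justification for that (the axioms of DF$_{p,\epsilon}^{\ell}$ force spanning) applies only once $K$ is known to be a model, which is what you are proving; the spanning of $C_K$ over $K^p$ by $m(\bar a)$ should instead be derived from the equivalence (1)$\Leftrightarrow$(2) of Proposition~\ref{characsep}: since $C_K$ and $L^p$ are linearly disjoint over $K^p$ and each $b\in C_K$ lies in the $L^p$-span of $m(\bar a)$, it already lies in the $K^p$-span, after which uniqueness of coefficients gives $\ell_i(b)\in K$ as you say.
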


We can now prove quantifier elimination.

\begin{theorem}
The theory $\SDCFl$ has quantifier elimination.
\end{theorem}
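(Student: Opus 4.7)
The plan is to deduce quantifier elimination by exhibiting $\SDCFl$ as the model-completion of its universal fragment DF$_{p,\epsilon}^{\ell}$. The standard criterion I would invoke is: if a universal theory $T_{0}$ has the amalgamation property and $T^{*}$ is a model-complete model-companion of $T_{0}$, then $T^{*}$ is the model-completion of $T_{0}$ and admits quantifier elimination. Of the three hypotheses needed, model-consistency and amalgamation of DF$_{p,\epsilon}^{\ell}$ are already provided by Lemma~\ref{adapt}(2)--(3), so the substantive work is to verify universality and to establish the model-completeness of $\SDCFl$.

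Universality is straightforward: besides the universal differential-field axioms, the requirement that $\bar a$ be differentially $p$-independent is the conjunction of $\d(a_j)=0$ with the universal statement
$$\forall \bar d\;\Bigl(\sum_{i=0}^{s} d_i^{p}\, m_i(\bar a) = 0 \;\rightarrow\; \bigwedge_i d_i = 0\Bigr),$$
and each clause of $(\dagger)$ is visibly universal in $b$.

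For model-completeness the key observation is that each $\ell_i$ is existentially $\La$-definable modulo DF$_{p,\epsilon}^{\ell}$: the formula $\ell_i(t)=s$ is equivalent to the disjunction of $\d(t)\neq 0 \land s=0$ with
$$\d(t)=0 \;\land\; \exists \bar y\,\bigl(y_i=s \;\land\; t=\sum\nolimits_j y_j^p m_j(\bar a)\bigr).$$
Hence every quantifier-free $\Ll(M)$-formula is equivalent, modulo DF$_{p,\epsilon}^{\ell}$, to an existential $\La(M)$-formula. Given $M \subseteq_{\Ll} N$ with both models of $\SDCFl$, their $\La$-reducts are models of $\SDCFa$ with $M \subseteq_{\La} N$, so Proposition~\ref{modelcompletea} yields that $M$ is $\La$-existentially closed in $N$; the translation then lifts this to $\Ll$-existential closure, establishing model-completeness.

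The main obstacle is the translation step: I must confirm that the auxiliary tuple $\bar y$ produced in the $\d(t)=0$ clause is uniquely determined, so that the condition $y_i=s$ genuinely forces $\ell_i(t)=s$ rather than merely an incidental equality. This uniqueness is exactly the differential $p$-independence of $\bar a$, which is an axiom of DF$_{p,\epsilon}^{\ell}$; so the translation is faithful, and combining model-completeness with Lemma~\ref{adapt}(2)--(3) via the standard criterion yields quantifier elimination for $\SDCFl$.
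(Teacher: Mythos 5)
Your proposal is correct and follows essentially the same route as the paper: both establish model-completeness of $\SDCFl$ by passing to the $\La$-reducts and invoking Proposition~\ref{modelcompletea}, and then conclude quantifier elimination from the fact that $\SDCFl$ is the model-completion of the universal theory DF$_{p,\epsilon}^{\ell}$ via Lemma~\ref{adapt}. The only cosmetic difference is that the paper transfers elementarity from $\La$ to $\Ll$ directly by observing that the differential $\lambda$-functions are $\La$-definable, whereas you carry out the explicit existential translation (whose faithfulness, as you note, rests on the uniqueness guaranteed by the differential $p$-independence of $\bar a$).
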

\begin{proof}
We claim that $\SDCFl$ is model-complete. Let $(K,\d)$ and $(L,\d)$ be models of $\SDCFl$ with $K\subseteq_{\Ll}L$. Then, $K$ and $L$ are in particular $\La$-structures that are models of $\SDCFa$ with $K\subseteq_{\La}L$ (cf. Section~\ref{compfinite}). By model-completeness of $\SDCFa$, we get that $K$ is an elementary $\La$-substructure of $L$. Since the differential $\lambda$-functions are $\La$-definable, we get that $K$ is also an elementary $\Ll$-substructure of $L$. Thus, $\SDCFl$ is model-complete.

Now, by Lemma~\ref{adapt}, $\SDCFl$ is the model-completion of DF$_{p,\epsilon}^{\ell}$. Since the latter is a universal theory, the model completion admits quantifier elimination.
\end{proof}

\begin{remark}\label{qedcfp}
As a special case, we obtain that SDCF$_{p,0}^{\ell}$ admits quantifier elimination. Recall that in this case there is a single differential $\lambda$-function $\ell_0$, which is the $p$-th root function on constants. In other words, we recover Wood's result that DCF$_p^r$ admits q.e. \cite[Corollary 2.6]{Wood76}. 
\end{remark}

\subsection{Quantifier elimination for $\epsilon=\infty$}\label{qeinfinite}
Let $A$ be a differential $p$-basis for $(K,\d)\models$DF$_{p,\infty}$. For each $n\in \NN$, let $s_n=p^n-1$ and fix an enumeration of the $p$-monomials (as functions) $\{m_0,\dots,m_{s_n}\}$. In the case of the infinite differential degree of imperfection, we define the differential $\lambda$-functions $\ell_{n,i}:K^{n+1}\to K$, for $n\in \NN$ and $i=0,\dots,s_n$, as follows: let $(\bar a; b)=(a_1,\dots,a_n;b)\in K^{n+1}$, if an entry of $(a_1,\dots,a_n;b)$ is not in $C_K$ then $\ell_{n,i}(a_1,\dots,a_n;b)=0$; assuming now that all entries are in $C_K$, if $(a_1,\dots,a_n)$ are differentially $p$-dependent or $(a_1,\dots,a_n;b)$ are differentially $p$-independent, then $\ell_{n,i}(a_1,\dots,a_n;b)=0$ for $i=0,\dots,s_n$; otherwise, they are (uniquely) determined by
$$(\#)\quad \quad b=\ell_{n,0}(a_1,\dots,a_n;b)^pm_0(\bar a)+\cdots+\ell_{n,s_n}(a_1,\dots,a_n;b)^pm_{s_n}(\bar a).$$


We expand the language $\Ld$ by function symbols $\ell_{n,i}$ with $n\in \NN$ and $i=0,\dots,s_n$ (where $\ell_{n,i}$ has arity $n+1$). We denote this new language by $\Ll$. We denote by DF$_{p,\infty}^\ell$ the $\Ll$-theory consisting of DF$_{p,\infty}$ together with axioms specifying that the $\ell_{n,i}$'s satisfy the conditions above. Similarly for SDCF$_{p,\infty}^\ell$. As before, to avoid confusion, we will write $K\subseteq_{\Ll}L$ to denote an extension of $\Ll$-structures.

\smallskip

We leave the proof of the following to the reader.

\begin{lemma}\label{subaremodels}
Let $L\models$DF$_{p,\infty}^\ell$ and $K\subseteq_{\Ld}L$. Then, $K\subseteq_{\Ll}L$ if and only if $L/K$ is separable. 
\end{lemma}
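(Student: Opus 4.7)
The plan is to deduce the equivalence from Proposition~\ref{characsep}, which identifies separability of $L/K$ with the persistence of differentially $p$-independent sets from $K$ to $L$, together with the uniqueness of the coefficients in the defining equation $(\#)$ of the $\ell_{n,i}$.

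For $(\Leftarrow)$, assume $L/K$ is separable. To show $K$ is closed under each $\ell_{n,i}^L$, I fix $(\bar a; b) \in K^{n+1}$ and observe that $C_K = C_L \cap K$ (since the derivations agree), so an entry lies in $C_K$ iff it lies in $C_L$; moreover, by Proposition~\ref{characsep}, for a tuple from $C_K$, being differentially $p$-independent is absolute between $K$ and $L$. Thus the three cases in the definition of $\ell_{n,i}$ trigger identically in $K$ and in $L$. In the two degenerate cases the value is $0 \in K$; in the remaining case, $b$ lies in the $K^p$-span of the $m_i(\bar a)$, so there exist $\mu_i \in K$ with $b = \sum_i \mu_i^p m_i(\bar a)$, and since the $m_i(\bar a)$ remain $L^p$-linearly independent by separability, these $\mu_i$ are unique and also satisfy the $L$-side definition; hence $\ell_{n,i}^L(\bar a; b) = \mu_i \in K$.

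For $(\Rightarrow)$, assume $K \subseteq_{\Ll} L$. By Proposition~\ref{characsep}, it suffices to show that every differentially $p$-independent tuple of $K$ remains $p$-independent in $L$. I proceed by induction on its length $n$: assuming $(a_1, \ldots, a_{n-1})$ is $p$-independent in $L$, suppose for contradiction that $(a_1, \ldots, a_n)$ is $p$-dependent in $L$. Grouping a nontrivial $L^p$-linear relation on the $p$-monomials of $(a_1, \ldots, a_n)$ by powers of $a_n$, and using the $p$-independence of $(a_1, \ldots, a_{n-1})$ in $L$, I obtain a nonzero polynomial of degree less than $p$ killing $a_n$ over $L^p(a_1, \ldots, a_{n-1})$. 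Since $a_n \in C_L$ also satisfies $x^p - a_n^p = 0$, minimality of the degree forces $a_n \in L^p(a_1, \ldots, a_{n-1})$, whence $a_n = \sum_i \mu_i^p m_i(a_1, \ldots, a_{n-1})$ with $\mu_i \in L$. By the definition of the $\ell$-functions, $\mu_i = \ell_{n-1,i}^L(a_1, \ldots, a_{n-1}; a_n)$, which lies in $K$ by the $\Ll$-substructure hypothesis; the same identity then exhibits $(a_1, \ldots, a_n)$ as $p$-dependent in $K$, contradicting the hypothesis on $(a_1,\ldots,a_n)$.

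The main technical ingredient, common to both directions, is the small lemma that a $p$-dependent one-element extension of a $p$-independent tuple (inside a field where the new element is a constant) forces the new element into the $L^p$-span of the earlier $p$-monomials; this is what makes $\ell_{n-1, i}$ capture the only candidate coefficients. Once this is recorded, the proof reduces to a careful, but essentially bookkeeping, comparison of the case-split definition of $\ell_{n,i}$ in $K$ versus in $L$.
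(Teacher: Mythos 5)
Your proof is correct and follows essentially the same route as the paper's: both directions reduce, via Proposition~\ref{characsep}, to tracking when a constant falls into the $L^p$-span of the $p$-monomials of a $p$-independent tuple, at which point the $\ell_{n,i}$ return its unique coordinates. The only differences are organizational — you argue $(\Leftarrow)$ directly and $(\Rightarrow)$ by induction on tuple length, where the paper uses contrapositives and a minimal counterexample — which amounts to the same argument.
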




We now prove the quantifier elimination result. 

\begin{theorem}
The theory $\SDCF_{p,\infty}^\ell$ has quantifier elimination. In particular, $\SDCF_{p,\infty}^\ell$ is the model-completion of DF$_{p,\infty}^\ell$.
\end{theorem}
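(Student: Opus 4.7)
The plan is to mirror the finite-$\epsilon$ quantifier elimination argument of Section~\ref{qefinite}, but with the model-completeness of $\SDCF_{p,\infty}^Q$ (Proposition~\ref{modcompinfty}(2)) taking over the role played by $\SDCFa$. The argument splits into two tasks: showing that $\SDCF_{p,\infty}^\ell$ is model-complete, and showing that it is the model-completion of $\text{DF}_{p,\infty}^\ell$. Since the latter will be (equivalent to) a universal theory, the model-completion will then admit quantifier elimination.

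First, I would establish the quantifier-free interdefinability of $\Ll$ and $\mathcal{L}_{\d,Q}$. For $Q_n$ I would give a recursive quantifier-free $\Ll$-definition:
$$Q_n(a_1,\dots,a_n) \iff Q_{n-1}(a_1,\dots,a_{n-1}) \land \d(a_n)=0 \land a_n\neq 0 \land \bigwedge_{j=0}^{s_{n-1}} \ell_{n-1,j}(a_1,\dots,a_{n-1};a_n)=0,$$
with base case $Q_0$ the empty conjunction; in the presence of $Q_{n-1}$ and $a_n\in C_K$, the vanishing of all $\ell_{n-1,j}(\dots;a_n)$ together with $a_n\neq 0$ captures ``$a_n\notin K^p(a_1,\dots,a_{n-1})$''. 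Conversely, each $\ell_{n,i}$ admits a universal $\mathcal{L}_{\d,Q}$-definition: its value is forced to be $0$ unless $Q_n(\bar a)\land \d(b)=0 \land \neg Q_{n+1}(\bar a,b)$ holds, in which case the relation $b=\sum_j \ell_{n,j}(\bar a;b)^p m_j(\bar a)$ determines $\ell_{n,i}(\bar a;b)$ uniquely by the $p$-independence of $\bar a$. Given these translations, any $\Ll$-extension $K\subseteq_{\Ll}L$ of models of $\SDCF_{p,\infty}^\ell$ is automatically an $\mathcal{L}_{\d,Q}$-extension of models of $\SDCF_{p,\infty}^Q$; Proposition~\ref{modcompinfty}(2) then yields $K\preceq_{\mathcal{L}_{\d,Q}}L$, and the $\mathcal{L}_{\d,Q}$-definability of the $\ell$'s lifts this to $K\preceq_{\Ll}L$, giving model-completeness of $\SDCF_{p,\infty}^\ell$.

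Next I would verify the infinite-$\epsilon$ analogues of Lemma~\ref{adapt}. \textbf{(a)} Every $\Ll$-substructure of a model of $\text{DF}_{p,\infty}^\ell$ is itself such a model: this combines Lemma~\ref{subaremodels} (identifying $\Ll$-substructures with separable differential subfields) with Proposition~\ref{characsep} (which guarantees that differential $p$-independence transfers across separable extensions in both directions), so the defining clauses of the $\ell$-functions pass down from $L$ to $K$. In particular $\text{DF}_{p,\infty}^\ell$ is, up to logical equivalence, a universal theory. \textbf{(b)} Every model of $\text{DF}_{p,\infty}^\ell$ extends to a model of $\SDCF_{p,\infty}^\ell$, by a constrained construction (Proposition~\ref{modelconsistent}), which by Lemma~\ref{preserveconstants} preserves any given differential $p$-basis and hence the $\ell$-functions. \textbf{(c)} $\text{DF}_{p,\infty}^\ell$ has the amalgamation property, adapted from Corollary~\ref{modelcompletion}(1): Kolchin's Theorem~\ref{Kolamal} yields a separable differential amalgam, and Lemma~\ref{reducebasis} produces a common differential $p$-basis on it, uniquely determining compatible $\ell$-functions.

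Combining model-completeness of $\SDCF_{p,\infty}^\ell$ with (b) and (c) identifies $\SDCF_{p,\infty}^\ell$ as the model-completion of $\text{DF}_{p,\infty}^\ell$; together with (a), the model-completion of a universal theory admits quantifier elimination. The main obstacle will be the careful bookkeeping in the interdefinability step: several corner cases---arguments lying outside $C_K$, $p$-dependent prefixes $\bar a$, and the degenerate value $b=0$---must all be reconciled so that the recursive $\Ll$-definition of $Q_n$ and the universal $\mathcal{L}_{\d,Q}$-definition of $\ell_{n,i}$ are faithful on every tuple, and in particular so that the inductive clause ``$a_n\neq 0 \land \bigwedge_j \ell_{n-1,j}=0$'' genuinely picks out $a_n\notin K^p(a_1,\dots,a_{n-1})$ and nothing more.
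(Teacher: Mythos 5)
Your first step is fine: the quantifier-free $\Ll$-definability of the predicates $Q_n$ from the $\ell_{n,i}$'s (and the definability of the $\ell_{n,i}$'s back from the $Q_n$'s), combined with Proposition~\ref{modcompinfty}, does yield model-completeness of $\SDCF_{p,\infty}^\ell$, and this reduction to $\mathcal L_{\d,Q}$ is also the mechanism the paper uses. The gap is in your item \textbf{(a)}: it is \emph{not} true that every $\Ll$-substructure of a model of DF$_{p,\infty}^\ell$ is again a model of DF$_{p,\infty}^\ell$, and consequently DF$_{p,\infty}^\ell$ is not equivalent to a universal theory. The axiom asserting infinite differential degree of imperfection is a $\forall\exists$ condition and does not descend to substructures: Lemma~\ref{subaremodels} and Proposition~\ref{characsep} only guarantee that differentially $p$-independent sets of the substructure remain independent upstairs, not that the substructure has any. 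Concretely, if $(L,\d)\models\SDCF_{p,\infty}^\ell$, then the prime field $\mathbb F_p$ is an $\Ll$-substructure: for $n\geq 1$ every tuple from $\mathbb F_p\subseteq L^p$ is differentially $p$-dependent, so all $\ell_{n,i}$ vanish on it, while $\ell_{0,0}$ restricted to $\mathbb F_p$ is the $p$-th root map; yet $\epsilon(\mathbb F_p)=0$. The same happens for any differentially perfect differential subfield closed under the $\ell$'s. This is exactly why the finite-$\epsilon$ argument does not transfer: there, the named tuple $\bar a$ together with the universal axiom $(\dagger)$ forces the degree of imperfection in every substructure, whereas in the infinite case no such device exists in the language $\Ll$. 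With (a) gone, the implication ``model-completion of a universal theory admits quantifier elimination'' is unavailable, and your items (b) and (c), which only concern models of DF$_{p,\infty}^\ell$, do not supply what quantifier elimination actually needs on top of model-completeness, namely amalgamation (or an embedding test) over \emph{arbitrary} common $\Ll$-substructures, including those of finite or zero differential degree of imperfection.

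The paper's proof is structured precisely to plug this hole. It runs the Shoenfield embedding test with $K$ countable and $L$ $\omega_1$-saturated: given a common $\Ll$-substructure $E$ whose differential $p$-basis $A$ may be finite, it first adjoins a countably infinite set $C$ of fresh differentially $p$-independent elements of $K$ (and a matching set $D$ inside $L$) so as to return to the infinite-imperfection setting, then performs a constrained construction $M$ over $E(C)$ inside $K$, embeds $M$ into $L$ over the given isomorphism via Corollary~\ref{primemodel4}, upgrades that embedding to an elementary one using the model-completeness of $\SDCF_{p,\infty}^Q$, and finally uses saturation and completeness of $\SDCF_{p,\infty}$ to extend to all of $K$. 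To repair your argument you would need to prove this stronger amalgamation/embedding statement over arbitrary substructures; the interdefinability bookkeeping you flag as the main obstacle is in fact the easy part.
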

\begin{proof}
Let $K$ and $L$ be models of $\SDCF_{p,\infty}^\ell$ with $K$ countable and $L$ $\omega_1$-saturated. Also, let $E$ and $F$ be $\Ll$-substructures of $K$ and $L$, respectively, and $\sigma:E\to F$ an $\Ll$-isomorphism. By Schoenfield test, quantifier elimination will follow once we show that $\sigma$ can be extended to an elementary $\Ll$-embedding from $K$ to $L$. 

By Lemma~\ref{subaremodels}, $K/E$ and $L/F$ are separable. Let $A$ and $B$ be differential $p$-bases for $E$ and $F$, respectively. If $A$ is finite, let $C$ be countably infinite such that $A\cup C$ is differentially $p$-independent in $K$; if $A$ is already infinite, set $C$ to be empty. Then, $A\cup C$ is a differential $p$-basis for $F(C)$, $C$ is algebraically independent over $F$, and $K/F(C)$ is separable. Choose similarly $D$ in $L$ (such that $B\cup D$ is differentially $p$-independent in $L$); it then follows that $F(C)$ and $E(D)$ are $\Ll$-isomorphic and are models of DF$_{p,\infty}^\ell$. 

Let $M$ be a constrained construction over $F(C)$ inside $K$ (as in Section~\ref{constructions}). Then $M\models \SDCF$ and has $C$ as a differential $p$-basis. It follows from the latter that $K/M$ is separable and hence $M\subseteq_{\Ll} K$. By Corollary~\ref{primemodel4}, we can extend the $\Ll$-isomorphism between $F(C)$ and $E(D)$ to an $\Ld$-embedding $\sigma':M\to L$. Since $M\subseteq_{\Ll} K$ and the differential $\lambda$-functions are $\Ld$-definable, $\sigma'$ is an $\Ll$-embedding. Let $M'=\sigma'(M)$. Then, $M'\models \SDCF_{p,\infty}$ and is an $\Ll$-substructure of $L$ (so $L/M'$ is separable). We can naturally make $M'$ and $L$ into $\mathcal L_{\d,Q}$-structures (see Section~\ref{completeinfinitecase}) and both will be models of $\SDCF_{p,\infty}^Q$. Furthermore, since $L/M'$ is separable, we get $M'\subseteq_{\mathcal L_{\d,Q}}L$. By model-completeness of $\SDCF_{p,\infty}^Q$ (see Theorem~\ref{modcompinfty}), we get that $M'$ is an elementary $\Ld$-substructure of $L$. Using again that the differential $\lambda$-functions are $\Ld$-definable, we see that $M'$ is elementary $\Ll$-substructure of $L$; in other words, $\sigma':M\to L$ is an elementary $\Ll$-embedding. As $L$ is $\omega_1$-saturated and $\SDCF_{p,\infty}$ is complete, we can further extend $\sigma'$ to an elementary $\Ll$-embedding $K\hookrightarrow L$, as desired. 
\end{proof}

\subsection{Stability} Let $\epsilon\in \mathbb N_0\cup\{\infty\}$. In this section, we prove that $\SDCFe$ is a stable theory. This generalises the result that DCF$_p$ is stable \cite{Shelah73}. Our proof makes use of a result of Kolchin (Proposition~\ref{changects} below) and our strategy differs from that used in the proof for DCF$_p$ presented in \cite{Shelah73,Wood76}. It does resemble the proof of the stability of SCF$_{p,e}$ presented by Srour in \cite[Proposition 8]{Srour86}.

\medskip

In general, for a differential field extension $(L,\d)/(K,\d)$, the constants $C_L$ need not equal $L^p\cdot C_K$. Nonetheless, when $L$ is differentially finitely generated over $K$, the following result of Kolchin says that $C_L$ is not much larger than $L^p\cdot C_K$.

\begin{proposition}\cite[Corollary 1,\S II.11]{Kolbook}\label{changects}
Let $(L,\d)/(K,\d)$ be a differentially finitely generated extension. Then, there exists a finite tuple $\beta$ from $C_L$ such that $C_L=L^p\cdot C_K(\beta)$ (namely, $C_L$ is finitely generated as a field over $L^p\cdot C_L$).
\end{proposition}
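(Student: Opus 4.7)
The plan is to induct on the number $n$ of differential generators of $L/K$. Writing $L=K\langle a_1,\ldots,a_n\rangle$ and setting $K_1=K\langle a_2,\ldots,a_n\rangle$, the inductive hypothesis applied to both $K_1/K$ (which has $n-1$ generators) and $L=K_1\langle a_1\rangle/K_1$ (which has one generator) yields finite tuples $\beta_1\subseteq C_{K_1}$ and $\beta_2\subseteq C_L$ with $C_{K_1}=K_1^p\cdot C_K(\beta_1)$ and $C_L=L^p\cdot C_{K_1}(\beta_2)$. Since $K_1^p\subseteq L^p$, combining gives $C_L=L^p\cdot C_K(\beta_1,\beta_2)$, as required. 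The problem therefore reduces to the base case $L=K\langle a\rangle$.

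For the single-generator case I would split on whether $a$ is differentially transcendental or differentially algebraic over $K$. In the transcendental case, $L$ is the fraction field of the differential polynomial ring $K\{a\}$; writing a prospective constant as a coprime quotient $P/Q$ with $P,Q\in K\{a\}$, the equation $Q\d(P)=P\d(Q)$ together with algebraic independence of $a,\d a,\d^2 a,\ldots$ over $K$ and a comparison of orders in the highest-appearing $\d^j a$ forces both $P$ and $Q$ to lie in $C_K[a^p,(\d a)^p,\ldots]$ up to a common factor, giving $C_L=L^p\cdot C_K$ and an empty $\beta$. In the differentially algebraic \emph{and} separable case, Theorem~\ref{sepideal} produces an irreducible $f\in I_\d(a/K)$ of minimal rank with $s_f(a)\neq 0$, which makes $L=K(a,\d a,\ldots,\d^{\ord f}a)$ a \emph{finitely generated} field extension of $K$; then $[L:L^p]$ is finite, and since $L^p\cdot C_K\subseteq C_L\subseteq L$, any $p$-basis of $C_L$ over $L^p\cdot C_K$ supplies a finite $\beta$.

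The genuinely hard case will be the differentially algebraic but \emph{non-separable} sub-case of the single-generator step, where $L$ need not be finitely generated as a field over $K$ and the degree bound just used collapses. The approach I would take is to locate an intermediate differential field $K\subseteq L'\subseteq L$ with $L'/K$ separable and differentially finitely generated, and with $L/L'$ purely inseparable and generated as a field by finitely many elements whose $p$-th powers lie in $L'$. The separable stratum $L'$ is then handled by the previous step, and the purely inseparable top can contribute at most finitely many new $p$-independent constants over $L^p\cdot C_K$. Pinning down the right $L'$ — naturally built from the separable part of the defining ideal $I_\d(a/K)$ together with the finitely many inseparable witnesses — and verifying that the inseparable layer is generated by a finite tuple of $p$-th roots is the main technical step, and is essentially the content of Kolchin's argument in \S II.11 being invoked here.
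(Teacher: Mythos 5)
First, a point of comparison: the paper does not prove this proposition at all --- it is imported verbatim from Kolchin [Corollary~1, \S II.11] --- so there is no in-paper argument to measure yours against, and your attempt has to stand as an independent proof. Your reduction to a single differential generator is fine, but both of the cases you treat as finished have problems, and the case you flag as hard is not done. In the separable differentially algebraic case, the step ``$L=K(a,\d a,\dots,\d^{\ord f}a)$ is a finitely generated field extension of $K$, hence $[L:L^p]$ is finite'' is false in general: for a separable extension $K$ and $L^p$ are linearly disjoint over $K^p$, so $[L:L^p]\geq[K:K^p]$, and in every situation where this proposition is actually used in the paper (the stability proof) the base $K$ is a model of $\SDCF_p$, for which $[K:K^p]=\infty$ by Lemma~\ref{basicprop}. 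What is finite is $[L:L^pK]\leq p^{r}$ with $r$ the transcendence degree of $L/K$; converting that into finiteness of $[C_L:L^p\cdot C_K]$ needs an extra linear-disjointness argument (for instance, $L^pK\cong L^p\otimes_{K^p}K$ gives $C_L\cap L^pK=L^p\cdot C_K$, and one must then compare $C_L$ against $L^pK$ rather than against $L^p$). As written, the degree bound you invoke collapses exactly in the regime the paper cares about.

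In the differentially transcendental case your order comparison only directly forces the top partial $\partial P/\partial x_n$ to vanish; propagating this down through all lower partials to land $P$ in $C_K[x_0^p,x_1^p,\dots]$ requires a further induction that is not routine (the lower partials can still involve $x_n^p$, so the degree bookkeeping has to be redone at each stage). Most importantly, the inseparable sub-case --- which you correctly identify as the crux, since $L$ need not even be a finitely generated field over $K$ there --- is left as a programme whose ``main technical step \dots is essentially the content of Kolchin's argument.'' Since the proposition being proved \emph{is} that citation to Kolchin, deferring the decisive step back to Kolchin makes the attempt circular as a standalone proof: it reduces the statement to itself. To make this a genuine proof you would need to actually construct the separable stratum $L'$ and control the purely inseparable layer, or else follow Kolchin's own route through the module of differentials of $L/K$.
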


Before we prove stability, we note that $\SDCFe$ is \emph{not} superstable. Indeed, if it were superstable, any model would be algebraically closed by the superstable version of Macintyre's theorem \cite{CherShe80}. However, models of $\SDCF_p$ have infinite (algebraic) degree of imperfection by Lemma~\ref{basicprop}.

\medskip

Below, for finite $\epsilon$, the theory $\SDCFl$ is the one introduced in Section \ref{qefinite}; while $\SDCF_{p,\infty}^\ell$ is the theory introduced in Section \ref{qeinfinite} (the difference being the differential $\lambda$-functions).

\begin{theorem}
The theory $\SDCFl$ is stable.
\end{theorem}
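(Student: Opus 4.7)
My plan is to establish stability by a direct type-counting argument that combines the quantifier elimination for $\SDCFl$ (from Subsections~\ref{qefinite} and \ref{qeinfinite}) with Kolchin's results on separable prime differential ideals and on constants of differentially finitely generated extensions. This parallels Srour's proof for $\SCFl$ (cited in the excerpt): there types are controlled by the $\lambda$-closure of a tuple over a model; here types should be controlled by the $\Ll$-closure under the differential $\lambda$-functions. Concretely, I would fix a small model $K \models \SDCFl$ of infinite cardinality $\kappa$ and aim to show $|S_n(K)| \leq \kappa^{\aleph_0}$, which yields $\lambda$-stability whenever $\lambda^{\aleph_0} = \lambda$ (for instance $\lambda = 2^{\aleph_0}$) and hence stability. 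Non-superstability is already forced, as pointed out in the excerpt, since every model has infinite algebraic degree of imperfection (Lemma~\ref{basicprop}) and a superstable theory of fields must be algebraically closed.

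By quantifier elimination, a complete $n$-type over $K$ is determined by its quantifier-free $\Ll$-type, and therefore by the $\Ll$-isomorphism type over $K$ of $K\langle \bar c\rangle^{\ell}$, the $\Ll$-substructure generated by $K$ together with a realisation $\bar c$. The first counting step is to bound the number of differential-field isomorphism types of $K\langle \bar c\rangle/K$. Since any $\Ll$-extension of $K$ is separable (Lemma~\ref{sepext} and Lemma~\ref{subaremodels}), $K\langle \bar c\rangle/K$ is separable, so $I_\d(\bar c/K) \subseteq K\{x_1,\dots,x_n\}$ is a separable prime differential ideal. By Kolchin's differential basis theorem (Theorem~\ref{diffbasis}), each such ideal is finitely generated as a radical differential ideal, so there are at most $\kappa^{<\omega} = \kappa$ choices.

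The second counting step is to bound, given $K\langle \bar c\rangle$, the number of $\Ll$-isomorphism types of $K\langle \bar c\rangle^{\ell}$ extending it. The key reduction lemma I would prove is: if $b \in K\langle \bar c\rangle^p\cdot C_K$, then $\ell_i(b) \in K\langle \bar c\rangle$ for every $i$. This follows from the uniqueness of the $p$-basis expansion $b = \sum \ell_i(b)^p m_i(\bar a)$ in the ambient model (where $\bar a$ is a differential $p$-basis), combined with the fact that $K\langle \bar c\rangle^p\cdot C_K$ is exactly the $K\langle \bar c\rangle^p$-span of $m(\bar a)$. Kolchin's Proposition~\ref{changects} then applies: because $K\langle \bar c\rangle/K$ is differentially finitely generated, $C_{K\langle \bar c\rangle} = K\langle \bar c\rangle^p \cdot C_K(\beta)$ for some finite tuple $\beta$, so only finitely many constants need genuinely new $p$-th roots to compute their $\ell$-values. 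An induction on the stages of the closure then shows that the whole $\Ll$-closure is determined by a countable family of $p$-th-root choices (each stage finitely generated over the previous by Proposition~\ref{changects} applied to the growing differential field). This contributes at most $2^{\aleph_0}$ additional isomorphism classes, giving $|S_n(K)| \leq \kappa \cdot 2^{\aleph_0}\leq\kappa^{\aleph_0}$.

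The main obstacle is Step~3: formalising the iterative $\ell$-closure and showing it stabilises up to $\Ll$-isomorphism within a bounded number of $p$-th-root adjunctions per stage. The delicate point is that the $\ell$-values of a new constant $\beta_j$ need not themselves be constants, so subsequent applications of $\ell$ to newly generated constants must be controlled by re-applying Proposition~\ref{changects} to the enlarged differential field. For $\epsilon = \infty$ one uses the same analysis uniformly over the countable family $\ell_{n,i}$, which does not change the cardinality bound. An alternative route would be to try to transfer stability from $\SCFl$ by viewing $\SDCFl$ as a definable expansion of its field reduct (which is a model of $\SCF_{p,\infty}$ by Lemma~\ref{basicprop}), but making such a reduct argument precise seems to require essentially the same control on constants, so I would favour the direct counting approach above.
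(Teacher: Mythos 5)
This is essentially the paper's proof: quantifier elimination reduces counting types over a model $K$ to counting $\Ll$-isomorphism types of generated substructures, and the iterated $\ell$-closure is shown to be countably generated as a differential field over $K$ by repeated application of Kolchin's Proposition~\ref{changects}, exactly as in your Steps 2--3 (the paper resolves your ``main obstacle'' by the explicit tower $F_{i+1}=F_i\langle\ell(b_i)\rangle$ together with the observation that the union is an $\Ll$-substructure because the differential $p$-basis, and hence separability inside $L$, is preserved at each stage). The one discrepancy is your intermediate bound of $2^{\aleph_0}$ for the closure data: the $\ell$-values are \emph{unique} $p$-th roots, so there is no binary choice to count, and what varies is the differential isomorphism type of the finitely many new generators over the previous stage, giving at most $\kappa$ possibilities per stage and hence $\kappa^{\aleph_0}$ over $\omega$ stages (the paper gets this in one stroke by counting ideals of a polynomial ring in countably many variables via Hilbert's basis theorem); since the target bound is $\kappa^{\aleph_0}$ in any case, your conclusion that $\SDCFl$ is $\kappa$-stable whenever $\kappa=\kappa^{\aleph_0}$ is unaffected.
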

\begin{proof}
Let $K\subset_{\Ll} L$ be models of $\SDCFl$ and $a\in L$. Let $F$ be the $\Ll$-structure generated by $a$ over $K$. We claim that $F$ is (differentially) countably generated as a differential field extension of $K$. We treat the cases of finite and infinite degree separately.
\medskip

\noindent \underline{\bf Finite $\epsilon$.} Let $F_0=K\langle a\rangle$. By Proposition~\ref{changects}, there is a finite tuple $b_0$ such that 
$$C_{F_0}=F_0^p\cdot C_{K}(b_0).$$
Let $F_1=F_0\langle \ell(b_0)\rangle$, where $\ell(b_0)$ denotes the (finite) tuple obtained by applying each $\ell_i$ to each entry of $b_0$. Again, Proposition~\ref{changects} yields a tuple $b_1$ such that
$$C_{F_1}=F_1^p\cdot C_{F_0}(b_1).$$
Continue in this fashion and let $F_{i+1}=F_i\langle\ell(b_i)\rangle$, there is a tuple $b_{i+1}$ such that
$$C_{F_{i+1}}=F_{i+1}^p\cdot C_{F_i}(b_{i+1}).$$
It follows, by construction, that $\cup_i F_i$ is the smallest $\Ll$-structure containing $a$ and $K$, and hence $F=\cup F_i$. Indeed, by construction, $\cup F_i$ is a differential subfield of $F$ with differential $p$-basis $\bar a$ (recall that, when $\epsilon$ is finite, $\bar a$ is a differential $p$-basis for $F$ and $L$). Thus, by Proposition~\ref{characsep}, $L$ is separable over $\cup F_i$, and, by Lemma~\ref{adapt}(1), this yields that $\cup F_i$ is an $\Ll$-substructure of $L$.  It follows that $F$ is countably generated as a differential field over $K$.

\medskip

\noindent \underline{\bf Infinite $\epsilon$.} Let $A$ be a differential $p$-basis for $K$. Let $F_0=K\langle a\rangle$. By Proposition~\ref{changects}, there is a finite tuple $b_0$ such that 
$$C_{F_0}=F_0^p\cdot C_{K}(b_0).$$
We can write $b_0=(b_{0,0},b_{0,1})$ where $A\cup b_{0,0}$ is $p$-independent over $L^p$ and $b_{0,1}$ is a tuple from $L^p(a_1,\dots,a_m,b_{0,0})$ where $a_1,\dots,a_m$ are distinct elements from $A$. Let $\ell(b_{0,1})$ denote the finite set of elements of the form $\ell_{n,i}(a_1,\dots,a_m,b_{0,0};b)$ where $b$ varies in the entries of $b_{0,1}$ and $n=m+length(b_{0,0})$.

Let $F_1=F_0\langle \ell(b_{0,1})\rangle$. Again, Proposition~\ref{changects} yields a tuple $b_1$ such that
$$C_{F_1}=F_1^p\cdot C_{F_0}(b_1).$$
Continue in this fashion and let $F_{i+1}=F_i\langle\ell(b_i)\rangle$, there is a tuple $b_{i+1}$ such that
$$C_{F_{i+1}}=F_{i+1}^p\cdot C_{F_i}(b_{i+1}).$$
It follows, by construction, that $\cup_i F_i$ is the smallest $\Ll$-structure containing $a$ and $K$, and hence $F=\cup F_i$. Indeed, by construction, $\cup F_i$ is a differential subfield of $F$ with differential $p$-basis $A\cup(\cup_i b_{i,0})$. Thus, by Proposition~\ref{characsep}, $L$ is separable over $\cup F_i$, and, by Lemma~\ref{subaremodels}, this yields that $\cup F_i$ is an $\Ll$-substructure of $L$.  It follows that $F$ is countably generated as a differential field over $K$.

\medskip


Since the differential $\lambda$-functions are definable in the language $\Ld$ (for $\Ll$-structures), the $\Ll$-isomorphism type of $F$ is determined by the $\Ld$-isomorphism type. The latter is determined by the defining differential ideal (over $K$) of the countably-many generators of $F$ in a differential polynomial ring over $K$ in countably-many (differential) variables.

A differential polynomial ring over $K$ in countable-many (differential) variables is also a polynomial ring over $K$ in countably-many (algebraic) variables. By Hilbert's basis theorem, there are at most $|K|^{\aleph_0}$ ideals in such polynomial rings. Hence, there are at most $|K|^{\aleph_0}$-many $\Ll$-isomorphism types for $L$. By quantifier elimination of $\SDCFl$, there are at most $|K^{\aleph_0}|$-many 1-types over $K$. In other words, $\SDCFl$ is $\kappa$-stable for any $\kappa$ with $\kappa=\kappa^{\aleph_0}$.
\end{proof}

\subsection{Prime model extensions} In this section we note that the theory $\SDCFl$ has unique prime model extensions. 

\begin{theorem}
The theory $\SDCFl$ has unique (up to isomorphism) prime model extensions. Furthermore, if $L$ is a prime model over $K$, then $L/K$ is atomic and $C_L=L^p\cdot C_K$.
\end{theorem}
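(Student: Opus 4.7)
The plan is to construct the prime model as a constrained construction over $K$ in the sense of Section~\ref{constructions}, verify that it is $\Ll$-atomic, and then deduce uniqueness by a back-and-forth between atomic prime models.

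For existence, let $L$ be a constrained construction over $K$. By definition $L\models \SDCF$, $L/K$ is separable, and $C_L=L^p\cdot C_K$; this last equality together with Proposition~\ref{characsep} ensures that any differential $p$-basis of $K$ remains a differential $p$-basis of $L$, so $L$ inherits the $\Ll$-structure from $K$ and $K\subseteq_{\Ll}L\models\SDCFl$. For primality, given any $\Ll$-extension $F\models\SDCFl$ of $K$, Lemma~\ref{adapt}(1) (for finite $\epsilon$) or Lemma~\ref{subaremodels} (for infinite $\epsilon$) yields $F/K$ separable, so the differential $p$-basis of $K$ extends to one of $F$. Corollary~\ref{primemodel4} then produces a differential $K$-embedding $\sigma:L\hookrightarrow F$, which is automatically an $\Ll$-embedding since the $\lambda$-functions are determined identically by the common differential $p$-basis, and QE of $\SDCFl$ upgrades $\sigma$ to an elementary embedding. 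The property $C_L=L^p\cdot C_K$ is part of the construction.

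For atomicity, write $L=\bigcup_n K_n$ with $K_{n+1}=K_n\langle\alpha_n\rangle$ and $\alpha_n$ constrained over $K_n$. By Corollary~\ref{isotype}(2), the $\Ld$-isomorphism type of $\alpha_n$ over $K_n$ is determined by a formula $f_n(x)=0\wedge g_n(x)\neq 0$ with $\ord g_n<\ord f_n$ and $s_{f_n}\neq 0$; by QE of $\SDCFl$, the same formula isolates $\tp_{\Ll}(\alpha_n/K_n)$. Enumerating $L\setminus K$ by listing $\alpha_n$ at stage $n$, followed by the remaining elements of $K_{n+1}\setminus K_n$ (each of which is a quantifier-free $\Ld(K_n\cup\{\alpha_n\})$-term) exhibits $L$ as a constructible $\Ll$-model over $K$. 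The classical fact that constructible models are atomic (a finite tuple lies in some $K_m$, and one combines the isolating formulas along the finite chain of stages) then gives: every finite tuple from $L$ has isolated $\Ll$-type over $K$. In fact this argument delivers strong atomicity: restarting the construction at any $K_m$ shows $L$ is atomic over every intermediate finitely generated $\Ll$-substructure.

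For uniqueness, let $L_1,L_2$ be prime models of $\SDCFl$ over $K$. Mutual primality forces $|L_1|=|L_2|$, and each embeds elementarily into the constrained construction $L$, so both are atomic over $K$ (elementary $\Ll$-substructures of atomic models are atomic). A back-and-forth produces a $K$-isomorphism $L_1\cong L_2$: given a finite $\Ll$-elementary partial map $\sigma:A\to B$ over $K$ and $a\in L_1\setminus A$, atomicity of $L_1$ over $K$ provides an isolating formula $\phi(x,\bar y)$ for $\tp_{\Ll}(a(A\setminus K)/K)$; then $\phi(x,\sigma(A\setminus K))$ is an isolating formula for a complete type over $B$, which is realised in $L_2$ by primality of $L_2$ over $K$ (any isolated type over $K$, with finitely many parameters substituted, is still realised in a prime model). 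The dual extension step is symmetric. This gives $L_1\cong L_2$ over $K$, and transfers $L/K$ atomic and $C_L=L^p\cdot C_K$ to any prime model. The main obstacle is this last step: executing the back-and-forth requires propagating atomicity from $K$ to the successively enlarged finite parameter sets inside the models (handled by the parameter-substitution remark above) and carefully verifying that the resulting extension is still an $\Ll$-embedding, which relies crucially on QE of $\SDCFl$.
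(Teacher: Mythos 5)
Your existence and atomicity arguments are essentially sound and follow the same route as the paper: a constrained construction over $K$ is an $\Ll$-extension (via Remark~\ref{better} and Lemma~\ref{preserveconstants}), embeds over $K$ into every model of $\SDCFl$ extending $K$ by Corollary~\ref{primemodel4}, and quantifier elimination upgrades the embedding to an elementary one; the construction is constructible, hence atomic, and the isolation of the type of each constrained generator is exactly the content of the paper's Proposition~\ref{consiso}. One small point there: Corollary~\ref{isotype}(2) only controls the $\Ld$-isomorphism type of a realisation of $f(x)=0\land g(x)\neq 0$, and you need Lemma~\ref{preserveconstants} to promote the resulting $\Ld$-isomorphism of the generated fields to an $\Ll$-isomorphism before quantifier elimination can be applied to conclude equality of $\Ll$-types; you gloss over this, but it is exactly the argument of Proposition~\ref{consiso}.

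The genuine gap is in the uniqueness step. Your back-and-forth between two atomic models $L_1,L_2$ over $K$ only terminates when the models are countable: after $\omega$ steps the accumulated partial elementary map has a countably infinite domain $A$, and to extend it further one needs $tp_{\Ll}(a/K\cup A)$ to be isolated for the next element $a$. Atomicity over $K$ is a statement about finite tuples over $K$ and gives no control over types over the infinite set $K\cup A$; your ``strong atomicity over finitely generated intermediate $\Ll$-substructures'' does not help at such limit stages either. Since the theorem is asserted for arbitrary $K\models$ DF$_{p,\epsilon}^{\ell}$ --- in particular uncountable $K$, over which any prime model is uncountable --- the argument as written does not establish uniqueness in general. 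Uniqueness of prime model extensions over arbitrary parameter sets is genuinely nontrivial; the paper obtains it by invoking the theorem that in a countable \emph{stable} theory prime model extensions, when they exist, are unique up to isomorphism (Tent--Ziegler, Chapter 9), which is precisely where the stability of $\SDCFl$ enters the proof. Alternatively one could appeal to Ressayre's uniqueness of constructible models, but that would require first showing that \emph{every} prime model over $K$ is constructible over $K$, which your proposal does not do (you only show the constrained construction is constructible, and deducing that an arbitrary prime model is isomorphic to it is the very statement at issue).
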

\begin{proof}
Since $\SDCFl$ has quantifier elimination any embedding of models will be elementary. Thus, a constrained construction (in the sense of Section~\ref{constructions}) over any $K\models\;$DF$_{p,\epsilon}^{\ell}$ will be a prime model over $K$. Also, recall that any constrained construction $L$ over $K$ has the property $C_L=L^p\cdot C_K$. Since $\SDCFl$ is countable and stable, prime model extensions are unique up to isomorphism (see \cite[Chap. 9]{TZ12}, for instance). Furthermore, since prime model extensions exists, the countability of the theory yields that isolated types in $S_1^{\SDCFl}(K)$ are dense (see \cite[Ex.5.3.2]{TZ12}, for instance), and thus atomic extensions exist. As a consequence, prime models are atomic.
\end{proof}

We conclude by noting that our ``constrained constructions" (from Section~\ref{constructions}) are in fact constructible extensions in the usual model-theoretic sense, see \cite[Def.5.3.1(2)]{TZ12}. This follows from the following result.

\begin{proposition}\label{consiso}
Let $L\models \SDCFl$ be sufficiently saturated and $K\subset_{\Ll}L$. Let $\alpha$ be a (finite) tuple from $L$. If $\alpha$ is constrained over $K$, then $tp(\alpha/K)$ is isolated in the type space $S_n^{\SDCFl}(K)$.
\end{proposition}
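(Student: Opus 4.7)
The plan is to produce an explicit $\Ld(K)$-formula that isolates $tp(\alpha/K)$ in $S_n^{\SDCFl}(K)$. By the Differential Basis Theorem (Theorem~\ref{diffbasis}), the separable prime differential ideal $I_\d(\alpha/K)$ is finitely generated as a radical differential ideal, so fix $f_1,\dots,f_s\in K\{\bar x\}$ with $I_\d(\alpha/K)=\{f_1,\dots,f_s\}$. Let $g\in K\{\bar x\}$ be a constraint of $\alpha$ over $K$, and consider the quantifier-free $\Ld(K)$-formula
$$\phi(\bar x):\quad \bigwedge_{i=1}^s f_i(\bar x)=0 \;\land\; g(\bar x)\neq 0.$$
The goal is to show $\phi$ isolates $tp(\alpha/K)$. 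Since $L\models\phi(\alpha)$ and $L$ is sufficiently saturated, it suffices to verify that every realisation $\beta$ of $\phi$ in $L$ satisfies $tp(\beta/K)=tp(\alpha/K)$.

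Fix such a realisation $\beta$. Since $K\subseteq_{\Ll}L$, Lemma~\ref{subaremodels} (or Lemma~\ref{adapt}(1) when $\epsilon$ is finite) yields that $L/K$ is separable, so the subextension $K\langle\beta\rangle/K$ is separable as well. The equations $f_i(\beta)=0$ give $I_\d(\alpha/K)\subseteq I_\d(\beta/K)$; combined with $g(\beta)\neq 0$, the defining property of a constraint forces $I_\d(\beta/K)=I_\d(\alpha/K)$. This yields a differential $K$-isomorphism $\sigma:K\langle\alpha\rangle\to K\langle\beta\rangle$ with $\sigma(\alpha)=\beta$. By quantifier elimination of $\SDCFl$, it is then enough to upgrade $\sigma$ to an $\Ll$-isomorphism over $K$.

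This upgrade is the main technical point. By Lemma~\ref{propconstrained}(1) (transferred through $\sigma$), the tuple $\beta$ is also constrained over $K$, so Lemma~\ref{preserveconstants} gives that any differential $p$-basis $A$ of $K$ remains a differential $p$-basis of $K\langle\alpha\rangle$ and of $K\langle\beta\rangle$. Since $L/K$ is separable, Proposition~\ref{characsep} shows that $A$ remains differentially $p$-independent in $L$; applying Proposition~\ref{characsep} once more yields that both $L/K\langle\alpha\rangle$ and $L/K\langle\beta\rangle$ are separable, and so by Lemma~\ref{subaremodels}/Lemma~\ref{adapt}(1) both $K\langle\alpha\rangle$ and $K\langle\beta\rangle$ are $\Ll$-substructures of $L$. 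Since $\sigma$ fixes $A$ (being the identity on $K$) and is a differential field isomorphism, it preserves the defining relations that characterise the $\ell$-functions, namely ($\dagger$) in the finite case over the common $p$-basis $\bar a\subseteq K$, or ($\#$) in the infinite case (which is formulated uniformly, without reference to a fixed $p$-basis). Therefore $\sigma$ is in fact an $\Ll$-isomorphism over $K$, and quantifier elimination gives $tp(\beta/K)=tp(\alpha/K)$, as required.
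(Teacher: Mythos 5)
Your proposal is correct and follows essentially the same route as the paper's proof: the same isolating formula built from a radical differential basis of $I_\d(\alpha/K)$ together with the constraint $g$, the same use of separability of $K\langle\beta\rangle/K$ and the constraint property to force $I_\d(\beta/K)=I_\d(\alpha/K)$, and the same upgrade of the differential $K$-isomorphism to an $\Ll$-isomorphism via Lemma~\ref{preserveconstants} and $\Ld$-definability of the differential $\lambda$-functions, followed by quantifier elimination. The extra detail you supply on why $K\langle\alpha\rangle$ and $K\langle\beta\rangle$ are $\Ll$-substructures of $L$ is a correct elaboration of what the paper leaves implicit.
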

\begin{proof}
Let $g$ be a constraint for $\alpha$. By Theorem~\ref{diffbasis} (i.e., the Differential Basis Theorem), there exist $f_1,\dots,f_s\in K\{\bar x\}$ such that the defining differential ideal of $\alpha$ is equal to the radical differential ideal generated by $f_1,\dots,f_s$. 
Let $\phi(\bar x)$ be the $\Ll$-formula
$$f_1(\bar x)=0 \; \land\; \cdots \;\land \; f_s(\bar x)=0\; \land\; g(\bar x)\neq 0. $$
We claim that $tp(\alpha/K)$ is isolated by $\phi$. Let $p\in S_n^{\SDCFl}(K)$ with $\phi\in p$. By saturation of $L$, there is a tuple $\beta$ from $L$ with $p=tp(\beta/K)$. Since $L\models \phi(\beta)$, we get $f_1(\beta)=\cdots=f_s(\beta)=0$, and so $I_\d(\alpha/K)\subseteq I_\d(\beta/K)$. Furthermore, since $g$ is a constraint for $\alpha$ and the extension $K\langle \beta\rangle/K$ is separable (as $K\subset_{\Ll}L$), we obtain $I_\d(\alpha/K)=I_\d(\beta/K)$. Thus, $K\langle \alpha\rangle\cong_{\Ld} K\langle \beta\rangle$ over $K$. Since $\alpha$ is constrained over $K$ so is $\beta$. Thus, by Lemma~\ref{preserveconstants}, $K\langle\alpha\rangle$ and $K\langle\beta\rangle$ are both $\Ll$-extensions of $K$. Since the differential $\lambda$-functions are $\Ld$-definable in $\Ll$-structures, it follows that
$$K\langle\alpha\rangle \cong_{\Ll(K)} K\langle \beta\rangle.$$
By quantifier elimination of $\SDCFl$, this yields $tp(\alpha/K)=tp(\beta/K)$. This shows that $\phi$ indeed isolates $tp(\alpha/K)$.
 \end{proof}

\

\section*{Appendix: Differentially separable extensions}

In \cite[\S II.8]{Kolbook}, Kolchin explores the notion of \emph{differentially separable extensions}. These are the differential analogues of separably algebraic extensions. One might naturally ask whether separably differentially closed fields can be characterised as those differential fields which are existentially closed in every differentially separable extension (cf. Theorem~\ref{sevchar}(2)). Here we note that this is {\bf not} the case. We prove below that a differential field $(K,\d)$ is existentially closed in every differentially separable extension if and only if $(K,\d)\models\,$ DCF.

In this section, we make no assumption on the characteristic of $K$. Let us recall some definitions from \cite[\S II.8]{Kolbook}. Let $L/K$ be a field extension and $(a_i)_{i\in I}$ an arbitrary family from $L$. Recall that the family $(a_i)_{i\in I}$ is said to be separably dependent over $K$ if there there exists a polynomial $f\in K[(t_i)_{i\in I}]$ vanishing at $(a_i)_{i\in I}$ such that at least one of the partial derivatives $\frac{\partial f}{\partial t_i}$ does \emph{not} vanish at $(a_i)_{i\in I}$.

\begin{remark}\label{charsep1}
In \cite[\S 0.4]{Kolbook}, Kolchin observes that a field extension $L/K$ is separable if and only if every family from $L$ that is algebraically dependent over $K$ is also separably dependent over $K$.
\end{remark}

\begin{definition}\cite[\S II.7-8]{Kolbook} \
Let $(L,\d)/(K,\d)$ be an extension of differential fields. 
\begin{itemize}
\item [(i)] A family $(a_i)_{i\in I}$ of elements from $L$ is said to be differentially separably dependent over $K$ if the family of derivatives $(\d^ja_i)_{i\in I,j\geq 0}$ is separably dependent over $K$. In the special case when the family $(a_i)_{i\in I}$ consists of a single element $a$, we simply say that $a$ is differentially separable over $K$.
\item [(ii)] We say that $L$ is differentially separable over $K$ if every element of $L$ is differentially separable over $K$.
\end{itemize}
\end{definition}

For several properties of differentially separable extensions, that resemble the properties of separably algebraic extensions, see \cite[\S II.8]{Kolbook}. We note here that if $(L,\d)/(K,\d)$ is differentially algebraic and $L/K$ is separable, then $(L,\d)/(K,\d)$ is differentially separable. Indeed, if $a\in L$, then $(\d^j a)_{j\geq 0}$ is algebraically dependent over $K$, but then by Remark~\ref{charsep1} this family must also be separably dependent over $K$. It turns out, however, that the converse is not true; namely, there are differentially separable extensions that are not separable (as fields). For instance, take any nonseparable field extension $L/K$, if we equip both fields with the trivial derivation then the extension will be differentially separable.

The above suggests that differentially separable extensions are not suitable to characterise separably differentially closed fields, in the manner of Theorem~\ref{sevchar}(2). In fact, our final result asserts this.

\begin{proposition}
Let $(K,\d)$ be a differential field. Then, $(K,\d)$ is existentially closed in every differentially separable extension if and only if $(K,\d)\models\DCF$.
\end{proposition}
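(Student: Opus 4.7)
The plan is to handle the two directions separately, with $(\Leftarrow)$ being immediate from general model-companion nonsense and $(\Rightarrow)$ being the substantive content.

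For $(\Leftarrow)$: If $(K,\d)\models\DCF$, then $(K,\d)$ is existentially closed in \emph{every} differential field extension, since $\DCF$ is the model-companion of the theory of differential fields. In particular, it is e.c.\ in every differentially separable extension.

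For $(\Rightarrow)$: First I would upgrade the hypothesis to $(K,\d)\models\SDCF$. Recall from the paragraph preceding the proposition that whenever $(L,\d)/(K,\d)$ is differentially algebraic with $L/K$ separable, the extension is automatically differentially separable. Thus the assumption forces $(K,\d)$ to be e.c.\ in every differentially algebraic separable extension, and Theorem~\ref{sevchar}(2) then gives $(K,\d)\models\SDCF$. In characteristic zero this already suffices since $\SDCF_0=\DCF_0$, so assume $\mathrm{char}(K)=p>0$. In view of the characterisation recalled in the introduction ($\DCF_p = \SDCF_p\,+\,$differentially perfect), it remains only to show $C_K=K^p$.

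Suppose for contradiction that there is $a\in C_K\setminus K^p$, and let $c$ be a $p$-th root of $a$. By Fact~\ref{extendder} the derivation extends uniquely to $L=K(c)$ with $\d c=0$. The key step is to verify that $L/K$ is differentially separable, even though it is visibly purely inseparable (hence not separable as fields). This is the main obstacle, but a soft dimension argument suffices: for any $b\in L$, the elements $b,\d b,\ldots,\d^p b$ all lie in $L$, which is a $p$-dimensional $K$-vector space, so there is a nontrivial $K$-linear relation $\sum_{j=0}^{p}c_j\d^j b=0$. Taking $j_0$ maximal with $c_{j_0}\neq 0$, the polynomial $f=\sum_{j\le j_0}c_j t_j\in K[t_0,\dots,t_{j_0}]$ vanishes on $(b,\d b,\dots,\d^{j_0}b)$ while $\partial f/\partial t_{j_0}=c_{j_0}\neq 0$, witnessing that $b$ is differentially separable over $K$. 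With $L/K$ differentially separable, the hypothesis gives that $(K,\d)$ is e.c.\ in $(L,\d)$; but $L\models\exists x\,(x^p=a)$ via $c$ while $K\not\models\exists x\,(x^p=a)$ by choice of $a$, a contradiction. Hence $C_K=K^p$ and $(K,\d)\models\DCF_p$.
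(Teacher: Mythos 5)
Your proof is correct and follows the same overall route as the paper: both directions are handled identically, the forward direction first upgrades to $(K,\d)\models\SDCF$ via Theorem~\ref{sevchar}, and differential perfection is then extracted by adjoining a $p$-th root $c$ of a constant with $\d c=0$ (Fact~\ref{extendder}) and playing the resulting differentially separable extension against existential closedness. The one place you genuinely diverge is in verifying that $K(c)/K$ is differentially separable: the paper observes that $c$ itself is differentially separable (since $\d c=0$) and then invokes Kolchin's Corollary (b) of \cite[\S II.8]{Kolbook} to propagate this to the whole extension, whereas you give a self-contained dimension count --- since $[K(c):K]=p$, any $b$ yields a nontrivial $K$-linear relation among $b,\d b,\dots,\d^p b$, whose leading coefficient witnesses separable dependence. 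Your argument is sound (irreducibility of $x^p-a$ for $a\notin K^p$ gives the dimension, and the linear witness has a nonvanishing partial derivative) and has the advantage of avoiding the external citation; the paper's version is shorter and records the more general closure property of differentially separable extensions.
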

\begin{proof}
($\Leftarrow$) is clear, as differentially closed fields are existentially closed in every differential field extension (in particular those that are differentially separable).
\smallskip

($\Rightarrow$) As differentially algebraic extensions that are separable are also differentially separable, the assumption in this direction yields that $K\models\SDCF$, see Theorem~\ref{sevchar}(1)-(2). In zero characteristic we are done. So we now assume $K$ has characteristic $p>0$. To prove that $(K,\d)\models \DCF_p$, it suffices to show that $(K,\d)$ is differentially perfect; namely, $C_K=K^p$. Let $b\in C_K$ and $c$ a $p$-th root of $b$ (in some extension). By Fact~\ref{extendder}, there is a derivation on $K(c)$ extending that on $K$ with $\d(c)=0$. By the latter, the element $c$ is differentially separable over $K$. Then, by \cite[Corollary (b), \S II.8]{Kolbook}, the extension $(K(c),\d)/(K,\d)$ is differentially separable. Thus, our assumption yields that $c\in K$, showing that $(K,\d)$ is differentially perfect. 
\end{proof}

\bigskip


\end{document}